\numberwithin{equation}{section}
\newcommand{\pp}{\mathbb{P}}
\newcommand{\C}{\mathbb{C}}
\newcommand{\cc}{\mathbb{C}}
\newcommand{\TT}{\mathbb{T}}
\newcommand{\Homom}{\mathrm{Hom}}
\newcommand{\Endom}{\mathrm{End}}
\newcommand{\Ker}{\mathrm{Ker}}
\newcommand{\rank}{\mathrm{rk}}
\newcommand{\Sym}{\mathrm{Sym}}
\newcommand{\cO}{\mathcal{O} }
\newcommand{\Kx}{K_{X}}
\newcommand{\sRat}{\underline{\mathrm{Rat}}}
\newcommand{\Rat}{\mathrm{Rat}}
\newcommand{\sPrin}{\mathrm{\underline{Prin}}}
\newcommand{\Prin}{\mathrm{Prin}}
\newcommand{\Sec}{\mathrm{Sec}}
\newcommand{\cE}{\mathcal{E}}
\newcommand{\cM}{\mathcal{M}}
\newcommand{\cQ}{\mathcal{Q}}
\newcommand{\cU}{\mathcal{U}}
\newcommand{\cW}{\mathcal{W}}
\newcommand{\tU}{\tilde{U}}
\newcommand{\Iden}{\mathrm{Id}}
\newcommand{\Spn}{\mathrm{Sp}_{n}}
\newcommand{\GL}{\mathrm{GL}}
\newcommand{\phoe}{\pp H^{1}(X, \Sym^{2}E)}
\newcommand{\hoe}{H^{1}(X, \Sym^{2}E)}
\newcommand{\sLag}{s_{\mathrm{Lag}}}
\newcommand{\Gpn}{\mathrm{Gp}_{n}}
\newtheorem{theorem}{{\textbf Theorem}}[section]
\newtheorem{proposition}[theorem]{{\textbf Proposition}}
\newtheorem{corollary}[theorem]{{\textbf Corollary}}
\newtheorem{lemma}[theorem]{{\textbf Lemma}}
\newtheorem{defn}[theorem]{{\textbf Definition}}
\newtheorem{remit}[theorem]{{\textbf Remark}}
\newenvironment{remark}{\begin{remit}\rm}{\end{remit}}
\newenvironment{definition}{\begin{defn}\rm}{\end{defn}}
\title[Lagrangian subbundles of symplectic bundles]{Lagrangian subbundles of symplectic bundles\\over a curve}
\author{Insong Choe and George H.\ Hitching}
\begin{document}

\begin{abstract}
A symplectic bundle over an algebraic curve has a natural invariant
$\sLag$ determined by the maximal degree of its Lagrangian
subbundles. This can be viewed as a generalization of the classical
Segre invariants of a vector bundle. We give a sharp upper
bound on $\sLag$ which is analogous to the Hirschowitz bound on the classical
Segre invariants. Furthermore, we study the stratifications induced by $\sLag$ on moduli
spaces of symplectic bundles, and get a full
picture for the case of rank four.
\end{abstract}

\subjclass{14H60, 14N05}

\keywords{Symplectic bundle over a curve, Lagrangian subbundle, Segre invariant, Hirschowitz bound, vector bundle extension, secant variety}

\maketitle

\section{Introduction}

Let $X$ be a smooth projective curve of genus $g \ge 2$ over $\cc$.
Let $G$ be a connected reductive algebraic group over $\cc$, and $P
\subset G$ a parabolic subgroup. Given a $G$-bundle $V$ over $X$, we
have an associated $G/P$-bundle $\pi \colon V/P \to X$. For a
section $\sigma$ of $\pi$, consider the normal bundle $N_\sigma$
over $\sigma(X) \cong X$ in $V/P$. We define
\[
s(V;P) := \min_\sigma \{ \deg N_\sigma \},
\]
where $\sigma$ runs through all sections of $V/P$.

It is easy to show that $s(V;P) > - \infty$; see Holla--Narasimhan \cite[Lemma
2.1]{HN}. A section $\sigma$ is called a {\it minimal section} of
$V$ if $\deg N_\sigma = s(V;P)$. According to Ramanathan \cite[Definition 2.13]{Rth}, $V$
is a semistable $G$-bundle if and only if $s(V;P) \ge 0$ for every
maximal parabolic subgroup $P$ of $G$. In general, it can be said
that the invariant $s(V;P)$ measures the degree of stability of $V$
with respect to $P$. The invariant $s(-,P)$ is a lower
semicontinuous function on any parameter space of $G$-bundles over
$X$, so defines a stratification on the moduli space of semistable
$G$-bundles over $X$.

The geometry of this stratification has been extensively studied in the case $G = \GL_n$. To review some of the results, first note
that a topologically trivial $\GL_n$-bundle $V$ is nothing but a
vector bundle of rank $n$ and degree zero, which we also denote $V$. For each $1 \le r \le n-1$, there is a maximal
parabolic subgroup $P_r$ of $\GL_n$ which is unique up to
conjugation.  A section $\sigma$ of the associated Grassmannian
bundle $V/P_r$ gives a rank $r$ subbundle $E$ of $V$, and vice versa. Since
$N_\sigma = \Homom(E, V/E)$, we get
\[
s(V;P_r) = \min \{ -n \deg E \ : \ E \subset V, \  \text{rk}(E) = r
\}.
\]
This coincides with the well-known invariant $s_r(V)$ of a vector
bundle $V$, sometimes called the \textit{$r$-th Segre invariant}. In
the literature, a subbundle $E$ is called a {\it maximal subbundle}
of $V$ if $s_r(V) = -n \deg E$.

The first result on the invariant $s_r$ is the upper bound
\[
s_r(V) \le g \cdot r(n-r).
\]
This was obtained by Nagata \cite{Nag} for $n=2$ and by Mukai and
Sakai \cite{MS} in general. Later, Hirschowitz obtained the
following sharp bound \cite[Th\'{e}or\`{e}me 4.4]{Hir} (see also \cite{CH}):

\begin{proposition} \label{Hir} For a bundle $V$ as above, we have
\[
s_r(V) \le r(n-r)(g-1) + \varepsilon,
\]
where $0 \le \varepsilon <n$ and $r(n-r)(g-1) + \varepsilon \equiv 0
\mod n$. \qed
\end{proposition}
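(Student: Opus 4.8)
The plan is to prove the inequality first for a general bundle and then to propagate it to all bundles using the lower semicontinuity of $s_r$ noted above. Write $B = r(n-r)(g-1) + \varepsilon$; the congruence condition guarantees that $e_0 := -B/n$ is an integer, and by definition $s_r(V) \le B$ is equivalent to the existence of a rank $r$ subbundle $E \subset V$ with $\deg E \ge e_0$. Since $s_r$ is lower semicontinuous and the parameter space of rank $n$, degree $0$ bundles over $X$ is irreducible, the locus $\{V : s_r(V) \le B\}$ is closed; hence it suffices to show that it contains a nonempty open set, that is, that a general such $V$ admits a rank $r$ subbundle of degree $e_0$.

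To produce such subbundles I would realize $V$ as an extension. Fix general stable bundles $E$ of rank $r$ and degree $e_0$ and $F$ of rank $n-r$ and degree $-e_0$, and form $V$ from an extension $0 \to E \to V \to F \to 0$ with class in $\mathrm{Ext}^1(F,E) = H^1(X, \Homom(F,E))$. By construction $E$ is a rank $r$ subbundle of degree $e_0$, so every $V$ arising this way satisfies $s_r(V) \le B$. The goal is then to show that these $V$ sweep out a dense subset of the moduli space $\cM(n,0)$.

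The numerical heart of the argument is a dimension count. Since $e_0 < 0$ for $g \ge 2$, general stable $E,F$ satisfy $\mu(E) < \mu(F)$, whence $\Homom(F,E) = 0$ and $h^1(\Homom(F,E)) = -\chi(F,E) = 2r(n-r)(g-1) + \varepsilon$ by Riemann--Roch. Extension classes differing by a nonzero scalar (equivalently, by the scalar automorphisms of the stable bundles $E$ and $F$) yield isomorphic $V$, so the relevant family of extensions has dimension
\[
\dim \cM(r,e_0) + \dim \cM(n-r,-e_0) + \bigl( h^1(\Homom(F,E)) - 1 \bigr) = n^2(g-1) + 1 + \varepsilon,
\]
using $\dim \cM(k,d) = k^2(g-1)+1$ and $r^2 + (n-r)^2 + 2r(n-r) = n^2$. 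As $\dim \cM(n,0) = n^2(g-1)+1$, the source exceeds the target by exactly $\varepsilon \ge 0$; dominance is thus dimensionally possible, and the choice of $\varepsilon \in [0,n)$ is precisely what makes $e_0$ the largest degree for which this holds.

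The main obstacle is to upgrade this numerical coincidence into an actual dominance statement, i.e.\ to prove that a general $V$ really is such an extension. I would do this by showing that the differential of the classifying map $(E,F,\delta) \mapsto V$ is surjective onto $T_{[V]}\cM(n,0) = H^1(X,\Endom V)$ at a general point. Filtering $\Endom V$ by the subbundle $E$ relates $H^1(\Endom V)$ to the tangent spaces $H^1(\Endom E)$, $H^1(\Endom F)$ and $\mathrm{Ext}^1(F,E)$ of the extension data, the discrepancy being governed by $\Homom(E,F)$: the kernel of the differential is $H^0(\Homom(E,F))$, of dimension $\chi(E,F) = \varepsilon$ by Riemann--Roch, matching the generic fibre (the space of rank $r$ degree $e_0$ subbundles of the fixed $V$), while its cokernel is $H^1(\Homom(E,F))$. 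Dominance therefore comes down to the vanishing $H^1(\Homom(E,F)) = 0$ for a general choice of $(E,F,\delta)$, which is a genuine general-position input on the bundles and the extension class rather than a consequence of stability alone; this is the delicate point. Granting it, the image contains a dense open subset of $\cM(n,0)$, so a general $V$ has a rank $r$ subbundle of degree $e_0$, and the semicontinuity argument of the first paragraph then yields the bound for every $V$.
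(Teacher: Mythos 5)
The paper does not prove this proposition itself: it quotes Hirschowitz's bound with references to \cite{Hir} and to \cite{CH}, where it is reproved via the Lange--Narasimhan-style dictionary between Segre invariants and higher secant varieties of the scroll $\Delta \subset \pp H^1(X,\Homom(F,E))$ (one fixes $E$ of very negative degree, shows that for $k$ large enough $\Sec^k\Delta$ fills the extension space, and concludes via Terracini that a general extension admits a lifting of a high-degree elementary transformation). That secant-variety argument is exactly the template the present paper adapts in \S 3 for $\sLag$. Your route is genuinely different: you parametrize extensions $0\to E\to V\to F\to 0$ with $E,F$ independently general and stable, and prove dominance of the classifying map onto $\cM(n,0)$ by a tangent-space computation. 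Your dimension count is correct ($h^1(\Homom(F,E))=2r(n-r)(g-1)+\varepsilon$ since $\Homom(F,E)$ has no sections by stability, giving a source of dimension $n^2(g-1)+1+\varepsilon$), your identification of the cokernel of the differential with $H^1(X,\Homom(E,F))$ via the exact sequence $0\to\cP\to\Endom V\to\Homom(E,F)\to 0$ is the right mechanism, and the propagation from general $V$ to all $V$ by lower semicontinuity of $s_r$ on a versal deformation is standard (though you should say ``versal deformation space'' rather than appeal to a single irreducible parameter space of all rank-$n$ degree-$0$ bundles). What your approach buys is directness and the extra information that the stratification is as expected; what the secant approach buys is the finer lifting picture that the rest of the paper actually needs.

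The one real gap is the vanishing $H^1(X,\Homom(E,F))=0$, which you explicitly grant. Note first that this depends only on $E$ and $F$, not on the extension class $\delta$, so it is cleaner than you suggest: since $\chi(\Homom(E,F))=\varepsilon\ge 0$, it is precisely the statement that the tensor product $E^*\otimes F$ of two general stable bundles is nonspecial. This is Hirschowitz's lemma (\cite[4.6]{Hir}, \cite[Theorem 1.2]{RT}), a genuinely nontrivial result --- indeed the secant-variety proof in \cite{CH} bottoms out at the same lemma (there it makes Terracini's tangent-space count come out right), and the present paper devotes \S 5.2 to proving a symmetric analogue of it. So your reduction is correct and complete \emph{modulo} that citation, but be aware that all of the actual difficulty of the proposition is concentrated in the step you have black-boxed; an argument that assumes it is a legitimate proof only if it invokes the lemma as a known external result rather than as a plausible genericity statement.
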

Next, let us recall the results on the stratification defined by
$s_r(V)$. Let $\cU(n)$ be the moduli space of semistable bundles
over $X$ of rank $n$ and degree zero. For each integer $s$ divisible
by $n$, we define
\[
\cU(n;r, s) := \{ V \in \cU(n) \ : \ s_r(V) \le s \}.
\]
By Proposition \ref{Hir}, we have $\cU(n;r,s) = \cU(n)$ if $s \ge
r(n-r)(g-1)$.
\begin{proposition}
{\rm (Brambila-Paz--Lange \cite{BPL}, Russo--Teixidor i Bigas \cite{RT})} \ \ For each integer $k$ with $0 < k
\le k_0 := \left\lfloor \frac{r(n-r)(g-1)}{n} \right\rfloor$, $\cU(n; r,kn)$ is an irreducible closed subvariety of $\cU(n)$. Also,
\[
{\rm codim} \; \cU(n; r,kn) = r(n-r)(g-1) - kn
\]
for each $k \leq k_0$. \qed
\end{proposition}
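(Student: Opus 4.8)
The plan is to combine the lower semicontinuity of $s_r$ with an explicit parametrization of the top-dimensional stratum by vector bundle extensions, followed by a Riemann--Roch dimension count. Closedness is immediate: since $s_r$ is lower semicontinuous on $\cU(n)$, the sublevel set $\{ s_r \le kn \}$ is closed. By definition $s_r(V) \le kn$ means exactly that $V$ has a rank $r$ subbundle $E$ with $\deg E \ge -k$, and the dense stratum of $\cU(n;r,kn)$ is the locus where the maximal degree of a rank $r$ subbundle equals $-k$ precisely; it is the dimension of this stratum that I would compute.

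To parametrize it, I would use that every such $V$ sits in an exact sequence $0 \to E \to V \to F \to 0$ with $\rank E = r$, $\deg E = -k$, $\rank F = n-r$, $\deg F = k$, and conversely is recovered from $E$, $F$ and an extension class in $H^1(X, \Homom(F,E))$. Thus I would form the parameter space $\mathcal{B}$ of triples $(E,F,[\xi])$, where $E$ is a stable bundle of rank $r$ and degree $-k$, $F$ is a stable bundle of rank $n-r$ and degree $k$, and $[\xi] \in \pp H^1(X, \Homom(F,E))$, together with the classifying map $\mathcal{B} \to \cU(n)$ sending a triple to the middle term $V$ of the corresponding extension.

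The dimension count then runs as follows. The moduli spaces of stable $E$ and of stable $F$ have dimensions $r^2(g-1)+1$ and $(n-r)^2(g-1)+1$. Since $\Homom(F,E) = F^* \otimes E$ is a tensor product of semistable bundles, it is semistable of negative slope $\mu = -nk/(r(n-r))$, so $H^0(X,\Homom(F,E)) = 0$; Riemann--Roch, using $\rank \Homom(F,E) = r(n-r)$ and $\deg \Homom(F,E) = -nk$, then gives $\dim H^1(X,\Homom(F,E)) = nk + r(n-r)(g-1)$ for every such pair, so that $\mathcal{B}$ is a projective bundle over the product of moduli spaces. Using the identity $r^2 + (n-r)^2 + r(n-r) = n^2 - r(n-r)$, this yields
\[
\dim \mathcal{B} = (n^2 - r(n-r))(g-1) + nk + 1.
\]

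Finally I would show that $\mathcal{B} \to \cU(n)$ is generically injective onto the stratum. As the moduli of stable bundles are irreducible, $\mathcal{B}$ is irreducible, hence so is its image and therefore its closure $\cU(n;r,kn)$. The generic fibre is a point: a generic $V$ in the image has a unique maximal rank $r$ subbundle $E$, which recovers $E$ and $F = V/E$, while the extension class is pinned down up to the action of $\mathrm{Aut}(E) \times \mathrm{Aut}(F) = \cc^* \times \cc^*$, which is by scalars and hence trivial on $\pp H^1$. Therefore $\dim \cU(n;r,kn) = \dim \mathcal{B}$, and subtracting from $\dim \cU(n) = n^2(g-1)+1$ gives codimension $r(n-r)(g-1) - kn$, as claimed. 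The main obstacle is exactly the genericity input in this last step: one must prove that for generic choices the middle term $V$ is itself stable (so that it lies in $\cU(n)$), that its maximal rank $r$ subbundle has degree precisely $-k$ --- so that $s_r(V) = kn$ rather than a smaller value and $V$ lies in the intended stratum --- and that this maximal subbundle is unique, so that the classifying map is generically injective. Proving these, typically by showing that the extensions producing a destabilizing or higher-degree subbundle sweep out only a proper subvariety of $\mathcal{B}$, is the technical heart of the argument; it is also where the hypothesis $k \le k_0$ enters, keeping the relevant $H^1$ nonzero and the stratum proper.
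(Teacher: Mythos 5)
First, note that the paper does not prove this proposition at all: it is quoted from \cite{BPL} and \cite{RT} with a \qed, so there is no internal proof to compare against. Your strategy is, however, exactly the strategy of those references (and of the paper's own symplectic analogue in \S 4.2): parametrize the stratum by extensions $0 \to E \to V \to F \to 0$ with $E$, $F$ stable of the extremal degrees, compute $\dim \pp H^1(X,\Homom(F,E))$ by Riemann--Roch after killing $H^0$ by semistability of $F^*\otimes E$, and compare with $\dim \cU(n)$. Your arithmetic is correct: $h^1 = nk + r(n-r)(g-1)$ and $\dim\mathcal{B} = (n^2 - r(n-r))(g-1) + nk + 1$, giving the stated codimension \emph{provided} the classifying map is dominant onto $\cU(n;r,kn)$ and generically finite.

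That proviso is where the genuine gap lies, and you have named it yourself without closing it. Three distinct things are deferred. (i) One must show the generic extension in $\mathcal{B}$ has stable middle term $V$ with $s_r(V)$ equal to $kn$ and not smaller; without this the count only bounds the dimension of the image from above and says nothing about the stratum. (ii) Generic finiteness of $\mathcal{B}\to\cU(n)$: your claim that the generic fibre is a single point because the maximal subbundle is unique is not quite right at the boundary --- when $kn = r(n-r)(g-1)$ the generic $V$ has finitely many (typically more than one) maximal subbundles --- but finiteness, which is what the dimension count needs, does hold and still requires proof. (iii) Irreducibility requires that \emph{every} $V$ with $s_r(V)\le kn$ lie in the closure of the image of $\mathcal{B}$; this includes bundles whose maximal subbundle $E$ or quotient $F$ is unstable (so the triple is not in $\mathcal{B}$), and bundles with $s_r(V)<kn$ strictly. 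One handles the first by deforming $E$ and $F$ to stable bundles inside irreducible families (as the paper does in the proof of Theorem 4.7) and the second by showing the deeper strata lie in the closure of the open one. Also, the role of $k\le k_0$ is not to keep $H^1(X,\Homom(F,E))$ nonzero (it always is for $k>0$); it is to make $\chi(\Homom(E,F)) = nk - r(n-r)(g-1)\le 0$, which is what makes (ii) possible and keeps $\dim\mathcal{B}\le\dim\cU(n)$. These deferred points are exactly the content of \cite{BPL} and \cite{RT}, so as written your argument is a correct skeleton rather than a proof.
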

\noindent Moreover, each variety $\cU(n;r,kn)$ can be described by
using the extension spaces of fixed type, see \cite[Theorem
0.1]{RT}.

On the other hand, not much seems to have been studied on the
properties of the invariant $s(V;P)$ of a $G$-bundle $V$ in general, except the
following universal upper bound on $s(V;P)$ obtained by Holla and
Narasimhan \cite[Theorem 1.1]{HN}:
\begin{proposition} Fix a parabolic subgroup $P$ of
$G$. For every $G$-bundle $V$, we have
\begin{equation} \label{HN}
s(V;P) \le g \cdot \dim(G/P). \qed
\end{equation}
\end{proposition}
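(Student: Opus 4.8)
The plan is to realize $N_\sigma$ as the restriction of a relative tangent bundle and to bound the minimal normal degree by a Riemann--Roch count together with a rigidity argument. Write $\pi \colon V/P \to X$ for the projection and let $T_\pi$ denote its relative tangent bundle, so that $T_\pi$ restricts on each fibre to the tangent bundle of $G/P$, which is globally generated since $G/P$ is homogeneous. For any section $\sigma$ there is a canonical identification $N_\sigma \cong \sigma^{*} T_\pi$, a bundle of rank $m := \dim(G/P)$ on $X \cong \sigma(X)$, and its degree is locally constant in connected families of sections. Sections exist, because the fibres $G/P$ are rational, hence rationally connected, so $\pi$ admits a section by Graber--Harris--Starr (equivalently, every $G$-bundle admits a reduction to $P$). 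Thus $s(V;P)$ is a well-defined integer, and for $G = \GL_n$ and $P = P_r$ the statement specializes to the Mukai--Sakai bound $s_r(V) \le g\, r(n-r) = g \dim \Gr(r,n)$ recalled above.

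Now let $\sigma$ be a minimal section, so that $d := \deg N_\sigma = s(V;P)$, and suppose for contradiction that $d > gm$, i.e.\ $d \ge gm + 1$. Fix a general point $x_0 \in X$ and set $p_0 := \sigma(x_0)$ in the fibre $(V/P)_{x_0}$. Deformations of $\sigma$ as a section keeping $\sigma(x_0) = p_0$ have Zariski tangent space $H^0(X, N_\sigma(-x_0))$ and obstructions in $H^1(X, N_\sigma(-x_0))$. Hence the scheme $\mathcal{S}_{p_0}$ of sections through $p_0$ has local dimension at $\sigma$ at least
\[
\chi\bigl(N_\sigma(-x_0)\bigr) = \bigl(\deg N_\sigma - m\bigr) + m(1-g) = d - mg \ge 1 .
\]
Therefore $\sigma$ moves in a positive-dimensional family of sections, all passing through the single point $p_0$ and all of normal degree $d$, since this degree is locally constant.

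Finally I would derive a contradiction from such a family. Choose a complete irreducible curve $B$ through $[\sigma]$ in the closure of the family and consider the evaluation morphism $F \colon B \times X \to V/P$, $(b,x) \mapsto \sigma_b(x)$, which contracts $B \times \{x_0\}$ to the point $p_0$. By the rigidity lemma applied to the projection $B \times X \to X$ (with $B$ proper), $F$ is then constant on every fibre $B \times \{x\}$, forcing all $\sigma_b$ to coincide and contradicting positive-dimensionality. This yields $d \le gm$. The main obstacle is precisely this last step: the locus of honest sections of fixed normal degree need not be proper, so one must either pass to a space of stable maps, where a limit of the family acquires a vertical rational curve in some fibre $G/P$ and leaves a genuine section of \emph{strictly smaller} normal degree --- again contradicting minimality --- or resolve the indeterminacy of the rational map $B \times X \dashrightarrow V/P$ over $x_0$ and apply rigidity on a modification. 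Controlling this degeneration, and checking that the normal degree strictly drops when a vertical component splits off, is the delicate point; the rest is the Riemann--Roch count above.
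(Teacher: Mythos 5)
The paper does not actually prove this proposition: it is quoted from Holla--Narasimhan \cite[Theorem 1.1]{HN} and stamped with a \emph{qed}, so there is no in-paper argument to compare against. Judged on its own terms, your outline follows the classical Nagata/Mukai--Sakai degeneration strategy (which is also the spirit of \cite{HN}): the identification $N_\sigma \cong \sigma^* T_\pi$, the existence of sections, and the deformation-theoretic estimate $\dim_{[\sigma]} \mathcal{S}_{p_0} \ge \chi(N_\sigma(-x_0)) = d - mg$ are all correct, and they do reduce the theorem to showing that a minimal section cannot move in a positive-dimensional family through a fixed point.

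However, that reduction is where the entire difficulty lives, and your proposal does not carry it out --- as you yourself concede. The rigidity-lemma version fails as stated, and not merely for a technical reason: the space of sections of fixed normal degree through $p_0$ is genuinely non-proper precisely because families of sections degenerate to a section of smaller degree plus vertical curves, which is the phenomenon the theorem is about. To close the argument one must (i) compactify the family (by cycles, stable maps, or a resolution of the rational map $B \times X \dashrightarrow V/P$) and show the limit over $x_0$ is a section component plus a nonempty effective vertical curve, with the rigidity lemma applied to the resolved family to force the vertical component to appear; and (ii) verify that the normal degree of the residual section strictly drops. Point (ii) is actually clean once set up, since $\det N_\sigma = \sigma^* \det T_\pi$ and $\det T_{G/P} = -K_{G/P}$ is ample on the fibres, so vertical components absorb strictly positive $\det T_\pi$-degree; but point (i) is the real content and is left entirely as ``the delicate point.'' As it stands the proposal is a correct plan with the decisive step missing, so it cannot be accepted as a proof; for a complete argument you should either supply the bend-and-break degeneration in detail or simply cite \cite{HN}, as the paper does.
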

Note that when $G = \GL_n$, this coincides with the Mukai--Sakai bound discussed above, which is not sharp.
As in the vector bundle case, the invariant $s(V;P)$ induces a stratification on the moduli space of semistable principal $G$-bundles over $X$.

In this paper, we study the geometry of the stratification when $G = \Spn$. A vector
bundle $W$ is \textit{symplectic} if there exists a nondegenerate
bilinear alternating form $\omega\colon W \otimes W \to \cO_X$. Such
a bundle always has even rank $2n$. It is easy to see that a
vector bundle of rank $2n$ is symplectic if and only
if the associated principal $\mathrm{\GL}_{2n}$-bundle admits a
reduction of structure group to $\Spn$.
A subbundle $E$ of $W$ is called \textit{isotropic} if $\omega|_{E
\otimes E} = 0$. By linear algebra, the rank of an isotropic
subbundle is at most $n$, and an isotropic subbundle of rank $n$ is
called a \textit{Lagrangian} subbundle. A Lagrangian
subbundle of $W$ corresponds to a section of the associated $\Spn /
P$-bundle $W/P$, where $P$ is the maximal parabolic subgroup of
$\Spn$ preserving a fixed Lagrangian subspace of $\cc^{2n}$. Let us
abbreviate $s(W;P)$ to $\sLag(W)$, where the lower case indicates
``Lagrangian''. Since $\Spn/P$ is none other than the Lagrangian
Grassmannian, the normal bundle of a section is given by $\Sym^2
E^*$. Recall that for any vector bundle $V$, we have
\[ \deg(\Sym^{2}V) = (\rank(V) + 1)\deg(V), \]
so
\[
\sLag(W) = \min \{ -(n+1) \deg E \ : \ E\ \text{a Lagrangian
subbundle of } W \}.
\]
According to the bound (\ref{HN}), we get
\begin{equation} \label{HN-bound}
\sLag(W) \le \frac{n(n+1)}{2} g.
\end{equation}
In other words, every symplectic bundle $W$ admits a Lagrangian
subbundle of degree at least $-\frac{n}{2} g$.

One may compare this with the Hirschowitz bound on the $n$-th Segre
invariant $s_n$, which says that as a vector bundle, $W$ admits a subbundle of half rank with
degree at least $ - \left\lceil \frac{n}{2} (g-1) \right\rceil$. We prove that
this slightly nicer bound is still valid for symplectic bundles.
\begin{theorem} \label{thm1}
For every symplectic bundle $W$ of rank $2n$, we have
\[
\sLag(W) \le \frac{1}{2} \left( n(n+1)(g-1) + (n+1) \varepsilon \right),
\]
where $0 \le \varepsilon  \le 1$ is such that $n(g-1) + \varepsilon$ is
even. This bound is sharp in the sense that the equality holds for a
general bundle $W$ in the moduli space $\cM_{2n}$ of semistable
symplectic bundles of rank $2n$ over $X$.  \qed
\end{theorem}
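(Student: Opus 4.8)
The plan is to translate the stated inequality into the existence of a Lagrangian subbundle of degree at least $d := -\tfrac12\bigl(n(g-1)+\varepsilon\bigr) = -\bigl\lceil \tfrac n2 (g-1)\bigr\rceil$, and to prove sharpness by showing that a general $W$ has no Lagrangian subbundle of degree exceeding $d$. Both halves I would organise around the symplectic extension $0 \to E \to W \to E^* \to 0$ attached to a Lagrangian subbundle $E$ of degree $e$, whose classes are parametrised by $\phoe$ (the projectivisation of $\hoe$, the part of $H^1(X, E\otimes E)$ compatible with the form). The numerics used throughout are $\deg\Sym^2 E = (n+1)e$ and $\rank \Sym^2 E = \tfrac{n(n+1)}{2}$, so that for semistable $E$ of negative slope $h^0(\Sym^2 E)=0$ and hence $h^1(\Sym^2 E) = -(n+1)e + \tfrac{n(n+1)}{2}(g-1)$, together with $\dim \cM_{2n} = (2n^2+n)(g-1)$.

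For sharpness I would run a dimension count. The symplectic bundles carrying a Lagrangian subbundle of degree $e$ form the image $\mathcal{S}_e$ of the forgetful map from the space of pairs $(E,[\delta])$, with $E$ a rank-$n$ degree-$e$ bundle and $[\delta]\in\phoe$, into $\cM_{2n}$. Bounding $\dim\mathcal{S}_e$ by the dimension of this parameter space gives $\dim\mathcal{S}_e \le n^2(g-1) - (n+1)e + \tfrac{n(n+1)}{2}(g-1)$, whence $\dim\cM_{2n}-\dim\mathcal{S}_e \ge (n+1)\bigl[\tfrac n2(g-1)+e\bigr]$. Taking $e=d+1$ makes this strictly positive in both parities of $n(g-1)$, so $\mathcal{S}_{d+1}$ is a proper subvariety and the general $W$ has no Lagrangian subbundle of degree $>d$. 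The point needing care is that a Lagrangian subbundle $E$ need not be semistable, so I would stratify the parameter space by the instability type of $E$ and check that the unstable strata (where $h^0(\Sym^2 E)$ may jump) do not raise the total dimension — a routine but necessary estimate of the same kind as in Brambila-Paz--Lange and Russo--Teixidor.

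For the upper bound itself I would first establish it for a general $W$ and then propagate it to every $W$. At the critical degree $e=d$ the same count gives $\dim\{(E,[\delta])\} \ge \dim\cM_{2n}$, with equality when $n(g-1)$ is even, so it remains to show the forgetful map is dominant, i.e.\ that the general symplectic bundle really does occur as an extension of $E^*$ by a degree-$d$ Lagrangian subbundle $E$. This is the crux: I would prove it by identifying the Lagrangian subbundles of a given extension with a secant-type locus of the image of $X$ in $\phoe$ (the Lagrangian analogue of the Lange--Narasimhan description of line subbundles via secant varieties), and showing the relevant secant locus is nonempty for the generic class $[\delta]$; equivalently one computes the differential of the forgetful map at a general pair and checks that it is surjective. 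Once $\sLag(W)$ meets the stated bound on a dense subset, lower semicontinuity of $\sLag$ together with irreducibility of $\cM_{2n}$ forces the bound on all of $\cM_{2n}$; the unstable bundles I would dispose of by an inductive symplectic reduction, passing to $\ell^{\perp}/\ell$ for a line subbundle $\ell$ of maximal degree (automatically isotropic) and lifting a Lagrangian subbundle from the lower-rank symplectic quotient.

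The main obstacle is precisely the dominance/non-emptiness step: the dimension count only says the parameter space is large enough, and turning this into an actual Lagrangian subbundle of the critical degree on a general $W$ requires the secant-variety correspondence (or the explicit tangent-space computation) to guarantee that the extensions sweeping out $\cM_{2n}$ genuinely admit the desired subbundle. Controlling the degenerate loci — where $X$ maps non-transversally into $\phoe$, or where the relevant secant variety fails to fill its expected dimension — is where the argument will be most delicate.
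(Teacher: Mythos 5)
Your sharpness half is essentially the paper's own argument (Lemma \ref{structure}): the count $\dim\cU(n,-e)+h^{1}(X,\Sym^{2}E)-1$ against $\dim\cM_{2n}=n(2n+1)(g-1)$ gives codimension $(n+1)\left[\tfrac{n}{2}(g-1)+e\right]$ for the locus of $W$ carrying a Lagrangian subbundle of degree $e$, and the worry about non-semistable $E$ is handled in the paper by placing $E$ in an irreducible versal family whose general member is stable. That part of your proposal is sound.

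The genuine gap is at the step you yourself call the crux. For the upper bound you reduce to the dominance of the forgetful map from pairs $(E,[\delta])$ at the critical degree $e=d$, but you prove neither of the two routes you mention, and equality of dimensions of source and target does not give dominance. The paper avoids this formulation entirely: by Lemma \ref{structure} a general $W$ already sits in a symplectic extension $0\to E\to W\to E^{*}\to 0$ with $E$ \emph{general} of some degree $-e$, $e\ge\tfrac{n}{2}(g-1)$; Theorem \ref{seginvsec} converts membership $\delta(W)\in\Sec^{k}\pp E$ into an isotropic lifting of an elementary transformation of $E^{*}$, hence a Lagrangian subbundle of degree $\ge e-k$; and the entire content becomes the non-defectivity of the secant varieties of $\pp E$ in $\pp H^{1}(X,\Sym^{2}E)$ (Proposition \ref{defect}), which forces $\Sec^{k_{0}}\pp E$ to fill the whole extension space for $k_{0}=e+\lceil\tfrac{n}{2}(g-1)\rceil$. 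That non-defectivity is proved by the Terracini lemma, identifying the span of tangent spaces at $v_{1},\dots,v_{k}$ with $\pp\Ker\left(H^{1}(X,\Sym^{2}E)\to H^{1}(X,\Sym^{2}F)\right)$ for the associated elementary transformation $F$, and then computing $h^{0}(X,\Sym^{2}F)$ via a variant of Hirschowitz' lemma ($h^{0}(X,F\otimes F)=0$ for general $F$ of degree $\le\tfrac12 n(g-1)$, Lemma \ref{vanishing}); none of this is supplied, or replaceable by a routine estimate, in your sketch. Two smaller inaccuracies: the relevant secant varieties are those of the $(n-1)$-plane bundle $\pp E$, not of the image of the curve $X$ in $\phoe$ (these coincide only for $n=1$); and your inductive reduction of unstable $W$ via $\ell^{\perp}/\ell$ does not obviously close, since the maximal line subbundle of an arbitrary rank-$2n$ symplectic bundle can have degree well below $\lceil\tfrac{n-1}{2}(g-1)\rceil-\lceil\tfrac{n}{2}(g-1)\rceil$, whereas a semicontinuity argument in an irreducible family suffices.
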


Next, consider the stratification on $\cM_{2n}$ given by the
invariant $\sLag$. For each $k
>0$, let
\[
\cM_{2n}^k := \{ W \in \cM_{2n} \ : \ \sLag(W) \le (n+1)k \}.
\]
By semicontinuity, $\cM_{2n}^k$ is a closed subvariety of $\cM_{2n}$
and $\sLag$ induces a stratification on $\cM_{2n}$. In particular
when $n=1$, since $\mathrm{Sp}_1$ is isomorphic to $\mathrm{SL}_2$,
this reduces to the stratification already studied on the moduli space
$\mathcal{SU}(2, \cO_{X})$ of semistable bundles of rank two with
trivial determinant. We prove the following result on the
stratification on the moduli space $\cM_4$ of semistable rank four
symplectic bundles over $X$. Note that $\cM_4^{g-1}$ is the whole
space $\cM_4$ by Theorem \ref{thm1}.

\begin{theorem} \label{mainthm}
For each $e$ with $1 \le e \le g-1$, the locus $\cM_4^{e}$ is an irreducible closed subvariety of $\cM_4$ of dimension
$7(g-1)+3e$. \qed
\end{theorem}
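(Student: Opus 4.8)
The plan is to realize $\cM_4^e$ as the closure of the image of a family of symplectic extensions and to compute its dimension by a parameter count, in the spirit of Brambila-Paz--Lange and Russo--Teixidor. Since $n=2$, the condition $\sLag(W) \le 3e$ is equivalent to the existence of a rank two Lagrangian subbundle $E \subset W$ with $\deg E \ge -e$. Given such an $E$, the symplectic form identifies $W/E$ with $E^*$, so $W$ arises as an extension $0 \to E \to W \to E^* \to 0$; and the symplectic (symmetry) condition forces the extension class to lie in the subspace $H^1(X, \Sym^2 E) \subseteq H^1(X, E \otimes E) = \mathrm{Ext}^1(E^*,E)$, while conversely every class in $H^1(X, \Sym^2 E)$ produces a symplectic bundle containing $E$ as a Lagrangian subbundle. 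I would first pin down this correspondence, and observe that it suffices to treat the open stratum where the maximal Lagrangian degree is exactly $-e$, since the remaining bundles lie in $\cM_4^{e-1}$ and are controlled by induction on $e$ (the analogous lower-dimensional base locus being handled by the same construction).

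Next I would build the parameter space. Over a dense open subset of the moduli space $\cU(2,-e)$ of semistable rank two bundles of degree $-e$, the groups $H^1(X, \Sym^2 E)$ have constant dimension: in characteristic zero $\Sym^2 E$ is semistable of slope $-e$ whenever $E$ is, so $H^0(X, \Sym^2 E) = 0$ and Riemann--Roch gives $h^1(X, \Sym^2 E) = 3(g-1) + 3e$. Hence the projectivized extension spaces $\pp H^1(X, \Sym^2 E)$ assemble into a projective bundle $\cS_e$ of relative dimension $3(g-1)+3e-1$ over an irreducible base of dimension $4(g-1)+1$, so that $\cS_e$ is irreducible of dimension
\[
[4(g-1)+1] + [3(g-1)+3e-1] = 7(g-1)+3e,
\]
exactly the predicted value. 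The universal symplectic extension over $\cS_e$ induces a classifying rational map $\phi \colon \cS_e \dashrightarrow \cM_4$ whose image lies in $\cM_4^e$ by construction.

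To finish I would establish two points. First, that a general extension $W$ is semistable, so that the image of $\phi$ is genuinely a subvariety of $\cM_4$: a destabilizing isotropic or Lagrangian subbundle of $W$ would intersect $E$ and $E^*$ in a prescribed way that imposes more conditions than a general class of $\pp H^1(X, \Sym^2 E)$ satisfies, which should follow from a transversality estimate specific to rank two. Second, that $\phi$ is generically finite onto its image: given $W$ together with a maximal Lagrangian subbundle $E$, the extension class is determined up to scalar, so the fibre of $\phi$ is governed by the set of maximal Lagrangian subbundles of $W$, which I expect to be finite for general $W$ in the image when $e \le g-1$. Granting these, $\mathrm{Im}(\phi)$ is irreducible of dimension $7(g-1)+3e$; since $\cM_4^e = \overline{\mathrm{Im}(\phi)} \cup \cM_4^{e-1}$ is closed by semicontinuity of $\sLag$ and $\cM_4^{e-1}$ has strictly smaller dimension by the inductive hypothesis, it follows that $\cM_4^e$ is an irreducible closed subvariety of dimension $7(g-1)+3e$.

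The principal obstacle is the generic finiteness of $\phi$, equivalently the assertion that a general symplectic bundle in $\cM_4^e$ carries only finitely many Lagrangian subbundles of the extremal degree $-e$. This is the symplectic analogue of the classical fact that a general bundle in a Segre stratum has finitely many maximal subbundles, and bounding the dimension of the family of Lagrangian subbundles --- through the normal-bundle and obstruction calculation with $\Sym^2 E^*$ --- is the technical heart of the argument. A secondary difficulty is ensuring that bundles whose maximal Lagrangian subbundle $E$ fails to be semistable, or for which $H^0(X, \Sym^2 E) \ne 0$, sweep out only lower-dimensional loci and hence create no extra components; for rank four this should be checkable by explicit slope estimates on the rank three bundle $\Sym^2 E$.
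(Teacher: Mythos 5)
Your overall strategy coincides with the paper's: parametrize symplectic extensions $0 \to E \to W \to E^* \to 0$ by a projective bundle with fibres $\pp H^1(X,\Sym^2 E)$ over (a cover of) $\cU(2,-e)$, and obtain $\dim \cM_4^e = 7(g-1)+3e$ as the dimension of this parameter space via a generically finite classifying map; your fibre-dimension count is correct. However, the two steps you defer are exactly where the paper's work lies, and the routes you sketch do not suffice. For generic finiteness you appeal to the deformation theory of Lagrangian subbundles with normal bundle $\Sym^2 E^*$; since $\chi(\Sym^2 E^*) = 3e - 3(g-1) \le 0$ this gives only \emph{expected} finiteness, and upgrading it to actual finiteness for a general member of the stratum is essentially the statement you are trying to prove. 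The paper instead shows (for $e \le g-2$) that a general extension has a \emph{unique} maximal Lagrangian subbundle, using two ingredients absent from your proposal: Lemma \ref{transverse}, which exploits the generality hypothesis $s_1(E) \ge g-1$ to show that any rank-two subbundle of $W$ meeting $E$ generically in rank one has degree $\le -(g-1)$, so that (Corollary \ref{gensurg}) every competing Lagrangian subbundle of degree $\ge -e$ must be an elementary transformation of $E^*$ lifting to $W$; and Theorem \ref{segsec}, which converts the existence of such a lifting into the condition $\delta(W) \in \Sec^{2e}\pp E$, excluded for general $\delta(W)$ by $\dim \Sec^{2e}\pp E \le 6e-1 < 3e+3g-4$. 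The same secant-variety mechanism, together with the $s_1(E)$ control on line subbundles, is what makes your ``transversality estimate'' for generic stability precise (Lemma \ref{semistability}); and the boundary case $e=g-1$ is handled separately via the sharpness statement in Theorem \ref{thm1}, since there $\dim \pp_{g-1} = \dim \cM_4$ and uniqueness fails.

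There is also a logical gap in your irreducibility argument: from $\cM_4^e = \overline{\mathrm{Im}(\phi)} \cup \cM_4^{e-1}$ and $\dim \cM_4^{e-1} < \dim \overline{\mathrm{Im}(\phi)}$ one cannot conclude irreducibility, since a union of two irreducible closed sets is reducible unless one contains the other. You need $\cM_4^{e-1} \subseteq \overline{\mathrm{Im}(\phi)}$, and likewise you must absorb the bundles whose only degree $-e$ Lagrangian subbundles are unstable. The paper avoids induction altogether by observing that \emph{every} $W \in \cM_4^e$ sits in a symplectic extension with $\deg E$ exactly $-e$ (allowing $E$ unstable), and that any such $E$ lies in an irreducible family specializing from stable bundles, so that all of $\cM_4^e$ is the closure of the image of the single map $\gamma_e$. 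Your proposal needs an argument of this kind to rule out extra components arising from the deeper strata.
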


In the case of genus two, this was proven in \cite{Hit3}.
The key ingredient of the proof there was a symplectic version of Lange and
Narasimhan's description \cite{LN} of the Segre invariant using
secant  varieties (also see \cite{CH} for a higher rank version in
the case of vector bundles). In this paper, we generalize the method
and results of \cite{Hit3} to the case of arbitrary genus. We will
see an interesting variation of Lange and Narasimhan's picture in
the case of symplectic bundles: a relation between the invariant
$\sLag$ and the higher secant varieties of certain fiber bundles over
$X$.

This paper is organized as follows. In \S2, we provide the basic
setup for our discussion. In particular, a relation between the
higher secant varieties and the Segre invariants will be established
in Theorem \ref{seginvsec}. In \S 3, we prove Theorem \ref{thm1}
using this relation combined with the Terracini Lemma. In \S4, we study
symplectic bundles of rank four in more detail. We will see that the
relation discussed in \S2 can be improved to yield a nice picture in
this case (Theorem \ref{segsec}). This enables us to prove Theorem
\ref{mainthm}. Finally a remark will be given on the comparison
between the two stratifications defined by $s_2$ and $\sLag$.

To streamline the
arguments, the proofs of two propositions will be postponed to \S5; namely,
the irreducibility of certain spaces of principal parts, and a variant of Hirschowitz' lemma.

Throughout this paper, we work over the field $\cc$ of complex
numbers.

\section{Symplectic extensions and lifting criteria}

In this section, we establish basic results on symplectic
extensions for our further discussions.

\subsection{Symplectic extensions}

Let $W \to X$ be a symplectic bundle and $E \subset W$ a
subbundle. There is a natural short exact sequence
\[ 0 \to E^{\perp} \to W \to E^{*} \to 0, \]
where $E^\perp$ is the orthogonal complement of $E$ with respect to
the symplectic form on $W$. If $E$ is a Lagrangian subbundle of $W$,
then $E = E^{\perp}$ and we get a class
\[
\delta(W) \in H^1 (X, \Homom(E^{*}, E)) = H^1(X, E \otimes E).
\]
Note that we have the decomposition
\[
H^1 (X, E \otimes  E) \ \cong \ \hoe \oplus H^1(X, \wedge^2 E).
\]
\begin{lemma}
Assume that $E$ is simple. An extension $0 \to E \to W \to E^{*} \to
0$ has a symplectic structure with respect to which $E$ is isotropic
if and only if the class $\delta(W)$ belongs to $\hoe$.
\label{fundam} \end{lemma}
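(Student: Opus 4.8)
The plan is to reduce the statement to a single Čech-cocycle computation, after which both implications fall out simultaneously. First I would fix an open cover $\{U_i\}$ of $X$ and, on each $U_i$, a local splitting $s_i \colon E^*|_{U_i} \to W|_{U_i}$ of the quotient map $q \colon W \to E^*$. On overlaps the differences $\delta_{ij} := s_i - s_j$ lie in $\Homom(E^*, E) = E \otimes E$ and form a Čech $1$-cocycle representing the extension class $\delta(W) \in H^1(X, E \otimes E)$. In the induced local trivialization $W|_{U_i} \cong E \oplus E^*$, with transition maps $(e,\xi) \mapsto (e + \delta_{ij}\xi,\ \xi)$, a $2$-form $\omega$ on $W$ for which $E$ is isotropic is given on $U_i$ by a block expression whose $E \otimes E$ block vanishes; after normalizing the pairing between $E$ and the quotient $E^* = W/E$ to be the canonical evaluation pairing, $\omega|_{U_i}$ is determined by a single term $q_i \in \wedge^2 E$ occupying the $E^* \otimes E^*$ block. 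The associated bilinear form is manifestly alternating and nondegenerate (its matrix has the invertible shape with $\pm\,$identity off the diagonal), so any globally consistent choice of the $q_i$ automatically yields a genuine symplectic structure with $E$ Lagrangian.

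The heart of the argument is to compare the two descriptions of $\omega$ across an overlap $U_{ij}$. Substituting the transition above into the local formula for $\omega$ and demanding that the two expressions agree yields, after a short calculation, the relation
\[
q_i - q_j = (1 - \sigma)\,\delta_{ij},
\]
where $\sigma$ is the swap involution on $E \otimes E$, so that $(1-\sigma)\delta_{ij}$ is twice the $\wedge^2 E$-component of $\delta_{ij}$. Thus a coherent family $\{q_i\}$ exists precisely when the cocycle $\{(1 - \sigma)\delta_{ij}\}$ is a coboundary, equivalently when the image of $\delta(W)$ under the projection $H^1(X, E \otimes E) \to H^1(X, \wedge^2 E)$ vanishes. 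Since this is exactly the projection coming from the decomposition $H^1(X, E \otimes E) = \hoe \oplus H^1(X, \wedge^2 E)$, the existence of a symplectic structure with $E$ isotropic is equivalent to $\delta(W) \in \hoe$. This settles the ``if'' direction by the construction above, and the ``only if'' direction by reading the same relation backwards from a given form.

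The one step requiring care — and the place where the hypothesis that $E$ is simple enters — is the normalization of the $E$-to-$(W/E)$ block to the canonical pairing. A symplectic form with $E$ Lagrangian induces an isomorphism $E \to (W/E)^* = (E^*)^* = E$; because $E$ is simple this lies in $\Endom(E) = \cc\cdot\Iden$, hence is a nonzero scalar and can be rescaled to the identity without affecting either the isotropy condition or the decomposition of $\delta(W)$. Without simplicity this induced map need not be scalar, and the clean symmetric/alternating dichotomy would be replaced by a condition twisted by that isomorphism. I therefore expect the bookkeeping around this normalization, rather than the cocycle computation itself, to be the main obstacle in turning this sketch into a fully rigorous proof.
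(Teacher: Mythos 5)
Your argument is correct, and it is genuinely self-contained where the paper is not: the authors' entire proof of Lemma \ref{fundam} is a citation (``due to S.\ Ramanan; see \S 2 of [Hit2]''), so your \v{C}ech computation serves as a replacement for, rather than a variant of, their argument. The key relation $q_i - q_j = (1-\sigma)\delta_{ij}$ checks out: writing $\omega$ on $U_i$ as $\omega((e_1,\xi_1),(e_2,\xi_2)) = \xi_2(e_1) - \xi_1(e_2) + q_i(\xi_1,\xi_2)$ and substituting the transition $(e,\xi)\mapsto(e+\delta_{ij}\xi,\xi)$ produces exactly the antisymmetrization of $\delta_{ij}$ as the discrepancy, so the obstruction to gluing the $q_i$ is the image of $\delta(W)$ in $H^1(X,\wedge^2 E)$, and both implications follow. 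You have also correctly isolated the role of simplicity: it enters only in the ``only if'' direction, to identify the $\cO_X$-linear map $E \to (W/E)^{*}\cong E$ induced by a given form (nonzero because $\omega$ is nondegenerate and $E$ is Lagrangian) with a nonzero scalar, after which a single global rescaling of $\omega$ reduces the off-diagonal block to the evaluation pairing; in the ``if'' direction the block matrix $\left(\begin{smallmatrix} 0 & I \\ -I & q\end{smallmatrix}\right)$ is alternating and invertible for any alternating $q$, so no hypothesis on $E$ is needed there. Two small points worth writing out in a final version: (i) the pairing $(e,\xi)\mapsto\omega(e,s_i\xi)$ is independent of the choice of splitting $s_i$ precisely because $E$ is isotropic, which is what makes the normalization a global rescaling rather than a choice on each $U_i$; (ii) the projection $E\otimes E\to\wedge^2E$ is $\tfrac{1}{2}(1-\sigma)$, so your cocycle is twice the $\wedge^2 E$-component of $\delta_{ij}$ --- harmless in characteristic zero, but the factor should be acknowledged.
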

\begin{proof}
Due to S.\ Ramanan; see \cite[$\S$2]{Hit2} for a proof.
\end{proof}

\subsection{Cohomological criterion for lifting}

Henceforth, an extension $0 \to E \to W \to E^{*} \to 0$ will be called \textsl{symplectic} if $W$ admits a symplectic structure with respect to which the subbundle $E$ is Lagrangian. By Lemma \ref{fundam}, this is equivalent to the condition $\delta(W) \in H^{1}(X, \Sym^{2}E)$.

We recall the notion of a \emph{bundle-valued principal part} (see Kempf
\cite{Kem} for corresponding results on line bundles). For any bundle $V$ over $X$, we have an exact sequence of $\cO_{X}$-modules
\begin{equation} \label{ratprin}
0 \to V \to \sRat(V) \to \sPrin(V) \to 0
\end{equation}
where $\sRat(V)$ is the sheaf of rational sections of $V$ and
$\sPrin(V)$ the sheaf of principal parts with values in $V$. We
denote their groups of global sections by $\Rat(V)$ and $\Prin(V)$
respectively. The sheaves $\sRat(V)$ and $\sPrin(V)$ are flasque, so
we have the cohomology sequence
\begin{equation} \label{cohomseq}
0 \to H^{0}(X, V) \to \Rat(V) \to \Prin(V) \to H^{1}(X, V) \to 0.
\end{equation}
We denote $\overline{s}$ the principal part of $s \in \Rat(V)$, and
we write $[p]$ for the class in $H^{1}(X, V)$ of $p \in \Prin(V)$.
Any extension class in $H^{1}(X, V)$ is of the form $[p]$ for some
$p \in \Prin(V)$, which is far from unique in general.

Now consider an extension of vector bundles
\begin{equation} \label{stdext}
0 \to E \to W \to E^* \to 0
\end{equation}
and an elementary transformation $F$ of $E^*$ defined by the
sequence
\[
0 \to F \stackrel{\mu}{\to} E^* \to \tau \to 0
\]
for some torsion sheaf $\tau$. We say that $F$ \textit{lifts to} $W$
if there is a sheaf injection $F \to W$ such that the composition $F
\to W \to E^*$ coincides with the elementary transformation $\mu$.
We quote two results from \cite{Hit2}:
\begin{lemma} \cite[Corollary 3.5 and Criterion 3.6]{Hit2} \label{cohomlift}
\ \ Suppose that $h^{0}(X, \Homom(E^{*}, E)) = 0$ and $E$ is simple. Let $W$ be an
extension of type (\ref{stdext}) with class $\delta(W) \in H^{1}(X, E \otimes E)$.
\begin{enumerate}
\item There is a one-to-one correspondence between principal parts $p \in \Prin(E \otimes E)$ such that
$[p] = \delta(W)$, and elementary transformations $F$ of $E^*$ lifting
to $W$ as a subbundle, given by \ $p \longleftrightarrow \Ker \left(
p \colon E^{*} \to \sPrin(E) \right)$.
\item The subbundle corresponding to $\Ker(p)$ is isotropic in $W$ if and only if $^tp = p$. \qed
\end{enumerate}
\end{lemma}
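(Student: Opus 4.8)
The plan is to realize both directions of the correspondence concretely through rational splittings of the extension (\ref{stdext}), and then to read off the symmetry condition in (2) by comparing $p$ with its transpose. Throughout, the two hypotheses $h^{0}(X,\Homom(E^{*},E)) = 0$ and the simplicity of $E$ will be used to suppress the ambiguity of lifts.

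For (1), the starting observation is that the rational sections $\sigma \colon E^{*} \to \sRat(W)$ of the projection $W \to E^{*}$ form a torsor under $\Rat(\Homom(E^{*},E)) = \Rat(E\otimes E)$, and that each one has a principal part $\overline{\sigma} \in \Prin(E\otimes E)$ with $[\overline{\sigma}] = \delta(W)$. Since $h^{0}(X,\Homom(E^{*},E)) = 0$, the map $\Rat(E\otimes E) \to \Prin(E\otimes E)$ in (\ref{cohomseq}) is injective, so $\sigma \mapsto \overline{\sigma}$ is a bijection from rational splittings onto $\{\,p : [p]=\delta(W)\,\}$. The first step is then to fix such a $p$ and its splitting $\sigma$ and to check that $\sigma$ stays regular on $F := \Ker(p\colon E^{*}\to\sPrin(E))$: writing the $E$-component of $\sigma(\phi)$ as $\sigma_{E}(\phi)\in\Rat(E)$, one has $\overline{\sigma_{E}(\phi)} = p\cdot\phi = 0$ for $\phi$ a local section of $F$, so $\sigma(\phi)\in W$. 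Thus $\sigma|_{F}$ is a lift of $F$, defining the assignment $p \mapsto \Ker(p)$.

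For the reverse direction and bijectivity, given an elementary transformation $F \hookrightarrow E^{*}$ with a lift $\lambda$, I would extend $\lambda$ to a rational splitting $\sigma$ (possible because $F$ and $E^{*}$ agree off a finite set), put $p := \overline{\sigma}$, and verify $F = \Ker(p)$. Regularity of $\lambda$ immediately gives $F \subseteq \Ker(p)$, and the real content is to show that $\Ker(p)$ is exactly the maximal locally free subsheaf on which the lift remains regular, so that requiring the lift to be a genuine subbundle of $W$ forces $F = \Ker(p)$ and renders the correspondence bijective. This saturation/maximality issue—reconciling the sheaf-theoretic kernel with the subbundle requirement, and ruling out distinct lifts differing by a global element of $\Homom(E^{*},E)$—is the step I expect to be the main obstacle, and it is precisely where $h^{0}(X,E\otimes E)=0$ and the simplicity of $E$ do the work.

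For (2), assume $W$ is symplectic with $E$ Lagrangian, so $\delta(W)\in\hoe$ by Lemma \ref{fundam}. With $p$ and its splitting $\sigma$ as above, the lifted subbundle $\sigma(F)$ is isotropic exactly when the rational alternating form $\beta(\phi,\phi') := \omega(\sigma(\phi),\sigma(\phi'))$ vanishes on $F$. A direct computation identifies $\beta$ with the antisymmetrization $\sigma_{E} - {}^{t}\sigma_{E}$, a rational section of $\wedge^{2}E$ whose principal part is $p - {}^{t}p$; since $F = E^{*}$ generically, $\beta|_{F}=0$ forces $\beta=0$ as a rational section, whence $p = {}^{t}p$. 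For the converse the decisive point is uniqueness: because $h^{0}(X,E\otimes E)=0$, the rational homomorphism $\sigma_{E}$ is the unique one with principal part $p$, so when $p = {}^{t}p$ the transpose ${}^{t}\sigma_{E}$ also has principal part $p$ and must equal $\sigma_{E}$, giving $\beta=0$ and hence isotropy. Thus $\sigma(F)$ is isotropic if and only if ${}^{t}p = p$, and I expect this uniqueness argument, rather than any explicit calculation, to be the crux of part (2).
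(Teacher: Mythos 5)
The paper offers no proof of this lemma: it is quoted from \cite{Hit2} (Corollary 3.5 and Criterion 3.6), so there is no internal argument to compare against. Your reconstruction via rational splittings is the standard route and, as far as I can tell, the one taken in \cite{Hit2}: the bijection between rational splittings $\sigma$ of $W \to E^{*}$ and principal parts $p$ with $[p]=\delta(W)$, obtained from the injectivity of $\Rat(E \otimes E) \to \Prin(E \otimes E)$ in (\ref{cohomseq}) when $h^{0}(X,\Homom(E^{*},E))=0$; the observation that $\sigma$ is regular precisely on $\Ker(p)$; and, for (2), the identification of the principal part of $\omega(\sigma(\cdot),\sigma(\cdot))$ with $p-{}^{t}p$ followed by the vanishing of $H^{0}(X,\wedge^{2}E)$. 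The essential content is all there. Two remarks. First, you place the weight of the hypotheses on the saturation step, but that step needs neither of them: the saturation of $\lambda(F)$ in $W$ meets $E$ in a torsion, hence zero, subsheaf, so it injects into $E^{*}$ and the extended splitting remains regular on it, forcing it into $\Ker(p)$. The hypotheses act earlier (the splitting--principal part bijection) and in making sense of the symplectic structure via Lemma \ref{fundam}. Second, and more substantively, your argument does not show that $p \mapsto \Ker(p)$ is injective on the level of abstract elementary transformations $F \subseteq E^{*}$: a given $F$ may admit several lifts to $W$, a torsor under $H^{0}(X,\Homom(F,E))$, which can be nonzero even when $H^{0}(X,\Homom(E^{*},E))=0$, and distinct lifts yield distinct principal parts with the same kernel. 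The clean bijection is with the lifted rank-$n$ subsheaves of $W$ (equivalently, pairs of $F$ together with a lift); note that this is all the paper ever uses, since Lemma \ref{genlift} only invokes the existence form ($F$ lifts isotropically if and only if some symmetric $p$ with $[p]=\delta(W)$ annihilates $F$), which your proof does establish. Finally, in (2) the object ``$\sigma_{E}$'' is only defined locally modulo regular splittings, so it is not a global rational homomorphism; what is globally defined is the alternating form $\omega(\sigma(\cdot),\sigma(\cdot))$ and its principal part $p-{}^{t}p$, and phrasing the converse as ``$\overline{\beta}=0$, hence $\beta \in H^{0}(X,\wedge^{2}E)=0$'' avoids the issue while proving exactly what you intend.
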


\subsection{Subvarieties of the extension spaces} \label{alternative}

Let $V$ be a vector bundle with $h^{1}(X, V) \neq 0$. We describe a
rational map of the scroll $\pp V$ into the projective space $\pp
H^{1}(X,V)$. Let $\pi \colon \pp V \to X$ be the projection. We have
the following sequence of identifications:
\begin{eqnarray*}
H^{1}(X,V) &\cong& H^{0}(X, \Kx \otimes V^{*} )^{\vee}  \\
 &\cong& H^{0}(X, \Kx \otimes \pi_{*}\cO_{\pp V}(1) )^{\vee} \\
 &\cong& H^{0}(X, \pi_{*} \left( \pi^{*}\Kx \otimes \cO_{\pp V}(1) \right) )^{\vee}  \\
 &\cong& H^{0}(\pp V, \pi^{*}\Kx \otimes \cO_{\pp V}(1) )^{\vee}.
 \end{eqnarray*}
Hence via the linear system $|\pi^{*}\Kx \otimes \cO_{\pp V}(1)|$,
we have a map $\varphi \colon \pp V \dashrightarrow \pp H^{1}(X,
V)$. For $V = \Homom(E^{*}, E) = E \otimes E$, we get a map
\[
\varphi: \pp (E \otimes E) \dashrightarrow \pp H^1(X, E \otimes E).
\]
Also for $V = \Sym^2 E$, we get a map
\[
\pp \Sym^2 E \dashrightarrow \pp  \hoe,
\]
which is nothing but the restriction of $\varphi$ to $\pp \Sym^2 E$.
If $v \in V$ is nonzero, we will abuse notation and let $v$ also denote the point of $\pp E$
corresponding to the vector $v$.
By restricting $\varphi$ further to the subvariety $\pp E \subset
\pp \Sym^2 E$ embedded by the map $v \mapsto v \otimes v$, we
get
\[
\psi: \pp E \dashrightarrow \pp \hoe.
\]

Now we give another way to define $\varphi$ and $\psi$, which will be
convenient in what follows. This idea was used by Kempf and Schreyer
\cite[$\S$1]{KS} to define the canonical map $X \to |\Kx|^{\vee}$, and
generalized in \cite[$\S$3.3]{Hit3}.

For each $x \in X$, we have an exact sequence of sheaves
\[ 0 \to V \to V(x) \to \frac{V(x)}{V} \to 0, \]
which is a subsequence of (\ref{ratprin}). Taking global sections,
we obtain
\begin{equation} \label{globalsections}
0 \to H^{0}(X,V) \to H^{0}(X, V(x)) \to V(x)|_{x} \stackrel{\partial}{\to} H^{1}(X, V)
\to \cdots
\end{equation}
The restriction of $\varphi$ to the fiber $\pp V|_x$ is identified
with the projectivized coboundary map $\partial$ in
(\ref{globalsections}). In view of (\ref{cohomseq}), it can be
identified with the map taking $v \in \pp V$ to the cohomology class
of a $V$-valued principal part supported at $x$ with a simple pole
in the direction of $v$. Similarly, $\psi$ can be identified with the map taking $v \in E|_x$ to the cohomology class of a $\Sym^{2}E$-valued principal part supported at $x$ with a simple pole along $v \otimes v$.

\begin{lemma} \label{psiemb}
Consider the above rational maps $\varphi$ and $\psi$ for a vector
bundle $E$.
\begin{enumerate}
\item The map $\varphi$ is an
embedding if $E$ is stable and $\mu(E) \le -1$.
\item If $E$ is stable with $\mu(E) \le -1/2$ then $\varphi$ is base-point
free.
\item If $g \ge 3$ and $E$ is general in $\cU(2,d)$, $d<0$, then $\psi$ is
an embedding.
\end{enumerate}
\end{lemma}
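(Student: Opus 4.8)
The plan is to treat all three maps uniformly as the morphism attached to a complete linear system on a projective bundle, and to convert very ampleness (or base-point freeness) into numerical conditions on $E$ by means of Serre duality and the fibrewise description in (\ref{globalsections}). The starting observation is that, since $H^0(\pp V,\pi^*\Kx\otimes\cO_{\pp V}(1))\cong H^1(X,V)^\vee$, the map $\varphi$ is given by the \emph{complete} system $|\pi^*\Kx\otimes\cO_{\pp V}(1)|$; hence it is an embedding exactly when this system is very ample. By the fibrewise description, $\varphi$ restricted to $\pp V|_x$ is the projectivised coboundary $\partial\colon V(x)|_x\to H^1(X,V)$, whose kernel is the image of $H^0(X,V(x))$. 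Testing very ampleness against length-two subschemes $Z\subset\pp V$ and pushing forward along $\pi$, one reduces everything to the single statement
\[
h^0(X,V(D))=h^0(X,V)\quad\text{for every effective } D\subset X\text{ with }\deg D\le 2,
\]
where $\deg D=1$ yields base-point freeness and separation inside a fibre, while $\deg D=2$ (including $D=2x$) yields separation of points and tangent vectors across fibres. Indeed a failure of this equality produces a nonzero rational section of $V$ with poles along $D$, equivalently a line subbundle of $V$ of degree at least $-\deg D$.

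For (1) and (2) I would apply this with $V=E\otimes E$, so that $\mu(E\otimes E)=2\mu(E)$ and, $E$ being stable, $E\otimes E$ is semistable. When $\mu(E)<-1$ (resp.\ $\mu(E)<-1/2$) every twist $(E\otimes E)(D)$ with $\deg D\le 2$ (resp.\ $\deg D\le 1$) still has negative slope, so $h^0((E\otimes E)(D))=0=h^0(E\otimes E)$ by semistability and the criterion applies immediately. The delicate point is the boundary. When $\mu(E)=-1$ (resp.\ $-1/2$) the relevant twist acquires slope $\mu((E\otimes E)(D))=0$, so any extra section is nowhere vanishing and gives an inclusion $\cO(-D)\hookrightarrow E\otimes E=\Homom(E^*,E)$, i.e.\ a nonzero homomorphism $E^*(-D)\to E$ between stable bundles of equal slope and rank, necessarily an isomorphism $E\cong E^*(-D)$. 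Excluding such a self-duality up to a twist of degree $\le 2$ is the main obstacle in these two parts, and is precisely where the full strength of stability must be used.

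For (3) I would exploit that $\psi$ is the restriction of $\varphi$ for $V=\Sym^2 E$ along the Veronese embedding $\pp E\hookrightarrow\pp\Sym^2 E$, $v\mapsto v\otimes v$, which pulls $\cO_{\pp\Sym^2 E}(1)$ back to $\cO_{\pp E}(2)$. Consequently $\psi$ is itself the morphism given by the complete system $|\pi^*\Kx\otimes\cO_{\pp E}(2)|$ on the surface $\pp E$, and I would again test very ampleness against length-two subschemes, now reducing to the vanishing of $h^0$ of $\Sym^2 E$ twisted by $D$ \emph{along the square directions} $v\otimes v$. Because the admissible poles are constrained to squares, these conditions are more subtle than the linear ones in (1)--(2): a base point or an identification occurs precisely when a square $v\otimes v$ lifts to the polar part of a section of $\Sym^2 E(D)$ for some effective $D$ of degree $\le 2$. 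Here I would invoke the genericity of $E\in\cU(2,d)$ together with $g\ge 3$, using the variant of Hirschowitz' lemma established in \S5 to guarantee that, for $E$ outside a proper closed subset, no such lift exists.

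The crux of (3) is therefore this generic vanishing, equivalently the statement that the loci in $\cU(2,d)$ on which a square direction lifts have positive codimension; the hypothesis $g\ge 3$ enters exactly to make $\cU(2,d)$ large enough for the expected-dimension count underlying the \S5 lemma to hold. Overall, the routine slope computations dispose of the interiors of the stated ranges, and the genuine work is concentrated in two places: the exclusion of the self-dual bundles $E\cong E^*(-D)$ at the boundary slopes in (1)--(2), and the generic non-liftability of square directions in (3).
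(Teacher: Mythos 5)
Your reduction of (1) and (2) to the statement $h^0(X,V(D))=h^0(X,V)$ for effective $D$ of degree $\le 2$ (resp.\ $\le 1$), followed by a slope computation for $V=E\otimes E$, is exactly the paper's route: the paper kills a putative nonzero $\alpha\colon E^*\to E(D)$ with the chain $\mu(E)+2\le\mu(E^*)<\mu(\mathrm{Im}\,\alpha)<\mu(E(D))=\mu(E)+2$. You are right that the boundary slope is the delicate point (the chain degenerates to equalities precisely when $\alpha$ is an isomorphism $E^*\cong E(D)$), but you leave that case unresolved, and your claim that ``the full strength of stability'' excludes it is not correct: stable rank-two bundles with $E\cong E^*(-D)$, i.e.\ $(\det E)^{2}\cong\cO_X(-2D)$, do exist, so excluding them requires either an extra hypothesis or a separation criterion finer than $h^0(V(D))=0$. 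This is a genuine loose end (one the paper itself passes over lightly, to be fair).

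The more serious gap is in (3), where you attribute the key step to the wrong tool. After the (correct) reduction to the non-existence of a symmetric map $\alpha\colon E^*\to E(x+y)$ with principal part $\frac{e\otimes e}{z}-\frac{f\otimes f}{w}$ (and its analogue with $x=y$), the paper does \emph{not} invoke the Hirschowitz-type vanishing of \S 5.2: that lemma controls $h^0(X,F\otimes F)$ for a single general $F$ and says nothing about sections of $\Sym^2E(x+y)$ with prescribed rank-one polar parts as $(x,y)$ ranges over a two-parameter family (the Euler characteristic $\chi(\Sym^2E(x+y))=3(2-g)$ is negative, but a single ``proper closed bad locus'' statement does not give vanishing uniformly over the whole family). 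What the paper actually does is a case analysis on the generic rank of $\alpha$: if $\alpha$ is generically injective, $\det\alpha$ forces $(\det E)^{-2}$ to be effective, impossible for general $E\in\cU(2,-1)$ when $g\ge3$; if $\alpha$ has generic rank one, it factors through a line bundle and yields a line subbundle of $E$ of degree $\ge-1$, contradicting the Lange--Narasimhan bound $s_1(E)\ge g-1$ for general $E$. These two ingredients are where $g\ge3$ and genericity genuinely enter (the \S 5.2 lemma is used only later, for Proposition \ref{defect}); neither appears in your proposal, so the heart of part (3) is missing.
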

\begin{proof} \ (1) For any divisor $D$ on $X$, write $V(D)$ for the bundle $V \otimes
\cO_{X}(D)$. One can easily see that $\varphi$ is base point free if $H^{0}(X, V(x)) = 0$ for all $x \in X$, and an embedding if $H^{0}(X, V(D)) = 0$ for all effective divisors $D$ of
degree 2. Since $E$ is stable, so is $E^*$, and the condition
$\mu(E) \le -1$ implies that
\[ \mu(E^{*}) \ge \mu(E) + 2. \]
If there were a nonzero map $\alpha \colon E^{*} \to E(D)$ for some
$D \in \Sym^{2}X$, then we would have
\[
\mu(E) + 2 \ \le \ \mu(E^*) \ < \ \mu(\mathrm{Im} \alpha) \ < \ \mu
(E(D)) = \mu(E) + 2,
\]
which is a contradiction. Thus $H^{0}(X, \Homom(E^{*}, E(D))) = 0$.

(2) The same argument as above shows that if $E$ is stable and
$\mu(E) \le -1/2$, then $ H^{0}(X, V(x)) = 0$ for
all $x \in X$. Therefore, in this case $\varphi$ is base-point free.

(3) By (1), $\varphi$ is an embedding for $d < -1$. Now suppose $d =
-1$. By (2), we know that $\varphi$ is base-point free, and hence so is $\psi$.

To show that $\psi$ is an embedding, it suffices to show that for
any $x,y \in X$,
\begin{itemize}
\item No two principal parts of the form
\[ \frac{e \otimes e}{z} \quad \hbox{and} \quad \frac{f \otimes f}{w} \]
are identified in $H^{1}(X, \Sym^{2} E)$ for any nonzero $e \in
E|_x$ and $f \in E|_y$, where $z$ and $w$ are local coordinates at
$x$ and $y$ respectively, and
\item the principal part
\[ \frac{e \otimes e}{z^2} \]
is not cohomologically zero for any nonzero $e \in E|_x$.
\end{itemize}
\indent The first statement is equivalent to saying that there does not
exist a symmetric map
\[ \alpha \colon E^{*} \to E(x + y) \]
with principal part equal to
\[
\frac{e \otimes e}{z} - \frac{f \otimes f}{w}. \label{ppt}
\]
\indent First assume that $\alpha$ is generically injective. Note
that any such $\alpha$ has rank one at $x$ and $y$. Hence the
non-zero section $\det(\alpha)$ of the line bundle
\[ \Homom (\det E^* ,\det E (2x+2y)) \cong (\det E)^{2}(2x+2y) \]
vanishes at $x$ and $y$. Thus $(\det E)^{2}(x+y)$ is trivial, so $(\det E)^{-2}$ is effective. But if $g \geq 3$, a general $E \in \cU(2,-1)$ does not
have this property.

Next, assume that $\alpha $ is of generic rank 1. Then $\alpha$
factorizes as
\[ E^{*} \to M \to E(x+y) \]
for some rank 1 sheaf $M$ of degree $m$. The surjection $E^* \to M$
and the injection $M \to E(x+y)$ imply that $E$ admits a line
subbundle of degree at least
\[
\max \{ \deg M^*, \deg M(-x-y) \} \ = \ \max \{ -m, m-2 \} \ \ge -1.
\]
This shows that $E$ is not general in $\cU(2,-1)$ for $g \ge 3$ by
a known property of the Segre invariant of $E$, which is analogous to
Proposition \ref{Hir}: For a rank 2 bundle $E$ of degree $d$, define
\[
s_1(E) : = \min \{ d - 2 \deg L \}
\]
where $L$ runs through all the line subbundles of $E$. Then
by Lange--Narasimhan \cite[Proposition 3.1]{LN}, we have
\begin{equation} \label{s_1}
g-1 \le s_1(E) \le g
\end{equation}
for a general $E \in \cU(2,d)$. This in particular shows that for $g
\ge 3$, a general $E \in \cU(2,-1)$ does not admit a line subbundle
of degree $\ge -1$.

Hence we conclude that there is no such symmetric map $\alpha$.
This proves the first statement. The second statement can be proven by a
similar argument with $x=y$.
\end{proof}

\subsection{Geometric criterion for lifting}

Firstly we adapt some ideas from \cite[$\S$3]{CH} to the present
situation.

\begin{definition} \label{genelm} Let $V \to X$ be a vector bundle. Then an elementary transformation
\[ 0 \to \tilde{V} \to V \to \tau \to 0 \]
is called \emph{general} if $\tau \cong \bigoplus_{i=1}^{k}
\C_{x_i}$ for distinct points $x_{1}, \ldots , x_k$ of $X$.
\end{definition}

The word ``general'' is justified as follows. As in
\cite[$\S$3]{CH}, one can consider a parameter space $\cQ_k(V)$ of
elementary transformations of $V$ of degree $\deg(V) - k$ and
observe that the locus of general elementary transformations is open
and dense in $\cQ_k(V)$.

\begin{definition} Let $Y$ be a closed subvariety of a projective space $\pp^N$.
The \emph{$k$-th secant variety} of $Y$, denoted by $\Sec^k Y$, is
the Zariski closure of the union of all the $k$-secants to $Y$, that
is, all the projective subspaces of $\pp^N$ spanned by $k$ distinct
points of $Y$.
\end{definition}

\begin{definition} For each $x \in X$, we denote $\Delta|_x$ the projectivization of the set of all rank one linear maps $E^{*}|_{x} \to E|_x$. The union of all the $\Delta|_x$ is a fiber bundle $\Delta$ inside the scroll $\pp ( E \otimes E )$, which we call the \emph{decomposable locus}. We will also use the fiber subbundle of $\Delta$ given as the image of the Veronese embedding $\pp E \hookrightarrow \pp ( E \otimes E )$. \end{definition}



Now we will consider a bundle $E \to X$ of rank $n$ which satisfies either condition
(1) or the condition (3) in Lemma \ref{psiemb} so that either
\[ \varphi \colon \pp(E \otimes E) \to \pp H^1(X, E \otimes E) \quad \hbox{or} \quad \psi: \pp E \to \pp \hoe \]
is an embedding. Consider the following diagram:
\[
\begin{CD}
\Delta @>>> \pp(E \otimes E) @>>> \pp H^1(X, E \otimes E)  \\
@AAA  @AAA  @AAA  \\
\pp E @>>> \pp (\Sym^2E) @>>> \pp H^1(X, \Sym^2 E).
\end{CD}
\]
If $\psi$ is an embedding, the composition of the two maps in the bottom
row is an inclusion, whereas if $\varphi$ is an embedding, all the arrows are inclusions.
\begin{lemma} \label{genlift}
Consider a bundle $W$ fitting into a nontrivial symplectic extension
of $E^*$ by $E$ with class $\delta(W) \in \phoe$.
\begin{enumerate}
\item If $W$ admits an isotropic
lifting of an elementary transformation $F$ of $E^{*}$ with
$\deg(E^{*}/F) \leq k$, then $\delta(W)$ belongs to $\Sec^k \pp E$.
\item If the class $\delta(W)$ corresponds to a general point of $\Sec^k \pp E$,
then $W$ admits an isotropic lifting of a general elementary
transformation $F$ of $E^{*}$ with $\deg(E^{*}/F) \leq k$.
\end{enumerate}
\end{lemma}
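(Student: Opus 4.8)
The plan is to use Lemma \ref{cohomlift} to convert both statements into assertions about symmetric principal parts, and to use the principal-part description of $\psi$ to recognize the resulting classes as points of $\Sec^k \pp E$. Recall that $\psi$ sends a nonzero $e \in E|_x$ to the class of a $\Sym^2 E$-valued principal part with a simple pole along $e \otimes e$; hence a point on the $k$-secant plane through $\psi(e_1), \ldots, \psi(e_k)$ (with $e_i \in E|_{x_i}$) is represented by $\sum_{i=1}^{k} \frac{e_i \otimes e_i}{z_i}$, a symmetric principal part supported at the $x_i$. Throughout, the standing hypothesis on $E$ (condition (1) or (3) of Lemma \ref{psiemb}) guarantees $h^0(X, \Homom(E^*, E)) = 0$ and $E$ simple, so Lemma \ref{cohomlift} applies.

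For part (2), a general point of $\Sec^k \pp E$ lies on an honest $k$-secant spanned by $\psi(e_1), \ldots, \psi(e_k)$ with the $x_i$ distinct. Absorbing the coefficients of the linear combination into the $e_i$, I may write $\delta(W) = [p]$ with $p = \sum_{i=1}^k \frac{e_i \otimes e_i}{z_i}$. This $p$ is symmetric, $^tp = p$, since each summand is, so by Lemma \ref{cohomlift} the elementary transformation $F = \Ker(p : E^* \to \sPrin(E))$ lifts to $W$ as an isotropic subbundle. Each summand is a rank-one simple pole, so $\deg(E^*/F) = k$, and the distinctness of the $x_i$ means $\tau \cong \bigoplus \C_{x_i}$, i.e.\ $F$ is general in the sense of Definition \ref{genelm}. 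This proves (2).

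For part (1), suppose $W$ admits an isotropic lifting of $F$ with $\deg(E^*/F) = d \le k$. By Lemma \ref{cohomlift} there is a unique $p \in \Prin(E \otimes E)$ with $[p] = \delta(W)$ and $\Ker(p) = F$, and part (2) of that lemma forces $^tp = p$, so $p$ takes values in $\Sym^2 E$. Using linearity of the class map in (\ref{cohomseq}), I split $p = \sum_{x} p_x$ over its finite support and set $\ell_x = \mathrm{length}\,(E^*/F)_x$, so that $\sum_x \ell_x = d$. It then remains to show that each local piece satisfies $[p_x] \in \Sec^{\ell_x} \pp E$; granting this, since the join of a point of $\Sec^{\ell} \pp E$ and a point of $\Sec^{\ell'} \pp E$ lies in $\Sec^{\ell + \ell'} \pp E$ and each $\Sec$ is Zariski closed, I conclude $\delta(W) = [p] \in \Sec^{d} \pp E \subseteq \Sec^{k} \pp E$.

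The crux, and the step I expect to be the main obstacle, is the local claim $[p_x] \in \Sec^{\ell_x} \pp E$. For a simple pole $p_x = \frac{A}{z}$ with $A \in \Sym^2 E|_x$ of rank $r$ this is immediate: diagonalizing $A = \sum_{i=1}^r e_i \otimes e_i$ gives $[p_x] = \sum_i \psi(e_i)$, and a direct local computation shows $\ell_x = r$. For higher-order poles the natural route is degeneration: a symmetric principal part of length $\ell_x$ at $x$ should arise as a limit of configurations of $\ell_x$ rank-one simple poles $\frac{e_\nu \otimes e_\nu}{z - a_\nu}$ at distinct points $a_\nu \to x$, each configuration being a genuine $\ell_x$-secant, so that closedness of $\Sec^{\ell_x} \pp E$ delivers the limit. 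Making this precise — in particular verifying that the length of $(E^*/F)_x$ equals the number of simple poles produced, so the secant order is exactly $\ell_x$ rather than something larger — is the technical heart of the argument.
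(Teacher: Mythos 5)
Your overall strategy coincides with the paper's: both parts are translated via Lemma \ref{cohomlift} into statements about symmetric principal parts, a general point of $\Sec^k \pp E$ is matched with a sum of $k$ rank-one symmetric simple poles at distinct points, and the non-general case of (1) is handled by degenerating to that situation inside the closed set $\Sec^k \pp E$. Your part (2) is complete and is essentially the paper's argument verbatim.

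The gap in part (1) is exactly where you flag it, and it is not a formality. You need to know that an arbitrary symmetric principal part of degree $\ell_x$ supported at a single point $x$, with poles of order $\geq 2$, is a limit of \emph{general} symmetric principal parts of the \emph{same} degree. The obstacle is that such a $p_x$ is not presented as a sum of diagonal terms $\frac{e_i \otimes e_i}{z^{k_i}}$: one can diagonalize the leading coefficient, but the lower-order coefficients then still contain cross terms $e_i \otimes e_j + e_j \otimes e_i$, and these cannot be discarded without changing the class and possibly the degree. The paper devotes \S 5.1 to precisely this point: by an iterated change of local frame (first $e_1' = e_1 + z\sum_j a_{1j}^{k} e_j$, then a further correction at order $z^2$, and so on) the cross terms are absorbed order by order, yielding a normal form $p_x = \sum_i \frac{e_i \otimes e_i}{z^{k_i}}$ with $\sum_i k_i = \ell_x$; only then does the explicit one-parameter family $\frac{e_i \otimes e_i}{(z-c_1 t)\cdots(z-c_{k_i}t)}$ exhibit $p_x$ as a limit of general symmetric principal parts of degree exactly $\ell_x$, which is what guarantees the secant order is $\ell_x$ and not larger. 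Your preliminary splitting $p = \sum_x p_x$ and the join argument are harmless (the paper's \S 5.1 likewise reduces at once to a single support point), so once this normal-form lemma is supplied your proof closes up and agrees with the paper's; without it, the "technical heart" you identify remains unproven.
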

\begin{remark} \label{ordinarylifting}
An analogous lifting condition for the map $\varphi$ was
considered in \cite[Theorem 4.4]{CH}, where a criterion on
the lifting of an elementary transformation of $E^*$ without the
isotropy condition, was given in terms of the higher secant variety of the scroll $\Delta
\subset \pp H^1(X, E \otimes E)$ instead of $\pp E \subset \pp H^1(X, \Sym^2 E)$.
\end{remark}
\begin{proof} Basically we follow the same idea underlying the proof of
\cite[Theorem 4.4]{CH}, except that we use the language of principal
parts here.

By Lemma \ref{cohomlift} (2), an elementary transformation $F$ of
$E^*$ lifts to $W$ isotropically if and only if the class
$\delta(W) = [p]$ for some symmetric principal part $p$
such that
\[ F \subseteq \Ker \left( p \colon E^{*} \to \sPrin(E) \right). \]
First assume that $F$ is a general elementary transformation so that
$p$ is a linear combination of $k$ symmetric principal parts $p_{1},
\ldots , p_{k}$ supported at distinct points $x_{1}, \ldots , x_k$,
with
\begin{equation}
p_{i} \in \Delta|_{x_i} \subseteq (E \otimes E)|_{x_i} \cong
\frac{\left( E \otimes E \right)(x_{i})}{(E \otimes E)|_{x_i}}
\label{elemppt}
\end{equation}
for each $i$. By symmetry, each $p_i$ belongs to $\Delta|_{x_i} \cap \pp \Sym^{2}E|_{x_i} = \pp E|_{x_i}$. By our alternative definition of $\varphi \colon \pp (E \otimes E)
\rightarrow \pp H^{1}(X, E \otimes E)$ at the end of
\S\ref{alternative}, the point $\delta(W) = [p]$ lies on the secant
plane spanned by $k$ distinct points of $\pp E$, in other words,
$\delta(W) \in \Sec^k(\pp E)$.

The proof of (1) will be completed by taking limit, once we know the irreducibility
of the space of  symmetric principal parts defining elementary transformations of degree $\deg E^{*} - k$. The detailed proof of
this rather obvious-looking fact will be given in \S5.1.

To show (2), suppose $\delta(W)$ lies on a secant plane spanned by
$k$ general points of $\pp E$. This means that $\delta(W)$ can be
defined by a linear combination $p = \sum \lambda_i p_i$ of at most
$k$ symmetric principal parts $p_i$ supported at distinct points, where
$p_i \in \pp E|_{x_i}$. By Lemma \ref{cohomlift} (2), the kernel of
$p \colon E^{*} \to \sPrin(E)$ lifts to $W$ isotropically. Also,
$\Ker(p)$ is a general elementary transformation of $E^*$ whose
degree is at least $\deg E^* -k$.
\end{proof}

From the above discussion, we obtain:
\begin{theorem} Let $E$ and $W$ be as above.
If $\delta(W) \in \Sec^{k}\pp E$, then we have $\sLag(W) \leq (n+1)(k + \deg E)$. \label{seginvsec} \end{theorem}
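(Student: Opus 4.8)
The plan is to deduce the bound from the lifting criterion of Lemma \ref{genlift}(2) for a generic class, and then to spread the inequality over the whole secant variety by semicontinuity; the numerical bound itself is forced by elementary degree bookkeeping.

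First I would assume that $\delta(W)$ is a general point of $\Sec^k \pp E$. Lemma \ref{genlift}(2) then produces an isotropic lifting to $W$ of a general elementary transformation $F$ of $E^*$ with $\deg(E^*/F) \le k$. By Lemma \ref{cohomlift} this lifting exhibits $F$ as a subbundle of $W$, and being isotropic of rank $n = \rank E^*$, it is a Lagrangian subbundle. Since
\[ \deg F = \deg E^* - \deg(E^*/F) \ge -\deg E - k, \]
the definition of $\sLag$ gives $\sLag(W) \le -(n+1)\deg F \le (n+1)(k + \deg E)$, which is the asserted bound.

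Next I would discharge the genericity assumption. Restricting the universal symplectic extension of $E^*$ by $E$ over $\phoe$ to the subvariety $\Sec^k \pp E$ gives a family of symplectic bundles $W_\delta$ indexed by $\delta \in \Sec^k \pp E$. As $\sLag$ is lower semicontinuous on any such family, the locus $\{\delta : \sLag(W_\delta) \le (n+1)(k + \deg E)\}$ is closed. The first step shows that it contains a dense open subset, and since $\pp E$ is irreducible its secant variety $\Sec^k \pp E$ is irreducible as well; hence the closed locus is all of $\Sec^k \pp E$, and the bound holds for every class in $\Sec^k \pp E$.

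The main obstacle is the rigor of this last step: one must realize the $W_\delta$ as a genuine flat family in order to license the use of semicontinuity of $\sLag$. This rests on the existence of a universal symplectic extension over $\phoe$---which is available because, by Lemma \ref{fundam}, the classes in $\hoe$ are exactly the symplectic extension classes---together with the (classical) irreducibility of the secant variety of the irreducible variety $\pp E$. I would expect the degenerate points of $\Sec^k \pp E$, where the spanning points of $\pp E$ collide or lie over a common fibre of $\pp E \to X$, to be precisely the ones that cannot be handled by the explicit construction of step one, which is exactly why the semicontinuity detour is needed.
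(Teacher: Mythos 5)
Your proposal is correct and follows essentially the same route as the paper: apply Lemma \ref{genlift}(2) at a general point of $\Sec^k \pp E$ to obtain an isotropic lifting of an elementary transformation $F$ with $\deg F \ge \deg E^* - k$, conclude $\sLag(W) \le (n+1)(k+\deg E)$ there, and then extend to all of $\Sec^k \pp E$ by the lower semicontinuity of $\sLag$ in the universal family of symplectic extensions. Your extra care in justifying the semicontinuity step (the family over $\pp H^{1}(X,\Sym^{2}E)$ and the irreducibility of the secant variety) is exactly what the paper implicitly relies on.
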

\begin{proof}
By Lemma \ref{genlift} (2), if $\delta(W)$ is a general point of
$\Sec^k \pp E$, then there is some elementary transformation $F
\subset E^*$ lifting to $W$ isotropically such that $\deg(F) \ge
\deg E^* -k$. Hence by definition, $\sLag(W) \le (n+1)(k+ \deg E)$. By
the semicontinuity of the invariant $\sLag$, this inequality still
holds for any $W$ with $\delta(W) \in \Sec^k \pp E$.
\end{proof}

\begin{remark} \label{converse}
(1) In \cite[Theorem 4.4]{CH}, it was proven that if $\delta(W) \in
\Sec^{k}\Delta$ then we have $s_{n}(W) \leq 2n(k+ \deg E)$, that is, $W$ contains a rank $n$ subsheaf of degree $\deg(E^{*}) - k$, which is not necessarily isotropic. We will return to this phenomenon in \S \ref{difference}.

(2) One can ask if the converse holds in Theorem \ref{seginvsec}.
This is a subtle question. Certainly the condition $\sLag(W) \le
(n + 1)(k + \deg E)$ implies that $W$ admits an isotropic subbundle $F$ of
rank $n$ and degree $\ge \deg E^* -k$. But in general $F$ need not
come from an elementary transformation of $E^*$, due to the possible
existence of the diagram of the form
\begin{equation} \label{exceptional}
\begin{CD}
0 @>>> E @>>> W @>>> E^{*} @>>> 0 \\
@. @AAA  @AAA  @AAA  @.\\
0 @>>> G @>>> F @>>> H @>>> 0
\end{CD}
\end{equation}
where $\rank(G) \geq 1$. In other words, for $n > 1$, it can happen that a maximal Lagrangian
subbundle of $W$ may intersect $E$ in a nonzero subsheaf. But in
\S4.1, we will show that this kind of diagram appears only in a
restricted way for $n=2$.
\end{remark}

\section{Upper bound on $\sLag$} \label{Hirschowitz}

One can easily see that the invariant $\sLag$ has no lower bound by
considering, for instance, a decomposable bundle $E \oplus E^*$
where $E$ is a bundle of arbitrarily large degree.  In this section,
we prove Theorem \ref{thm1} which gives us the sharp upper bound on
$\sLag$.

In \cite{CH}, the Hirschowitz bound in Proposition \ref{Hir} was
reproved using the relation between the invariant $s_r$ and the
geometry of the higher secant varieties of the ruled varieties in
the extension spaces. We would like to adapt the same idea to the
case of symplectic bundles by applying the results in the previous
section. We begin with the following observation.

\begin{lemma}
A general symplectic bundle  $W \in \cM_{2n}$ has a Lagrangian
subbundle $E$ of degree $\le -\frac{1 }{2}n(g-1)$ which is general
as a vector bundle. \label{structure}
\end{lemma}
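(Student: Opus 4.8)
The plan is to realize a general $W \in \cM_{2n}$ as a symplectic extension of $E^{*}$ by a general vector bundle $E$ of the prescribed degree, by means of a dimension count. Let $d$ be the largest integer with $d \le -\frac{1}{2}n(g-1)$, and consider the family $\mathcal{E}$ of symplectic bundles arising as extensions $0 \to E \to W \to E^{*} \to 0$, where $E$ ranges over the stable bundles in $\cU(n,d)$ and the class $\delta(W)$ ranges over $\phoe$; by Lemma \ref{fundam} these are exactly the symplectic extensions. Since $d<0$ and $E$ is stable, $\Sym^{2}E$ is semistable of negative slope, so $h^{0}(X,\Sym^{2}E)=0$ and $h^{1}(X,\Sym^{2}E) = -(n+1)d + \frac{n(n+1)}{2}(g-1)$ is constant over $\cU(n,d)$ by Riemann--Roch. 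Hence $\mathcal{E}$ is irreducible, fibred in projective spaces over the stable locus of $\cU(n,d)$, of dimension
\[ \dim \cU(n,d) + \left( h^{1}(X,\Sym^{2}E) - 1 \right) = \left( n^{2}(g-1)+1 \right) + \left( -(n+1)d + \tfrac{n(n+1)}{2}(g-1) - 1 \right). \]

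The choice $d = -\frac{1}{2}n(g-1)$ is calibrated so that this number equals $\dim \cM_{2n} = n(2n+1)(g-1)$: indeed $-(n+1)d = \frac{n(n+1)}{2}(g-1)$, and $n^{2}(g-1) + n(n+1)(g-1) = n(2n+1)(g-1)$ since $n^{2}+n(n+1)=n(2n+1)$. There is a rational classifying map $\Phi \colon \mathcal{E} \dashrightarrow \cM_{2n}$, defined wherever $W$ is a semistable symplectic bundle, and everything reduces to proving that $\Phi$ is dominant.

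I would establish dominance in two steps. First, the domain of definition of $\Phi$ is nonempty: a general symplectic extension is semistable, which, since a symplectic bundle is semistable as an $\Spn$-bundle if and only if it is semistable as a vector bundle, follows from the stability of $E$ of negative slope together with a standard general-extension argument. Second, because $\mathcal{E}$ and $\cM_{2n}$ are irreducible of the same dimension, it suffices to show that $\Phi$ is generically finite. The fibre of $\Phi$ over $W$ is the set of degree $d$ Lagrangian subbundles of $W$ (the scalar automorphisms of a stable $E$ act on $\phoe$ by squares, hence trivially, so no further quotient intervenes); since the normal bundle of such a subbundle is $\Sym^{2}E^{*}$, the Zariski tangent space to this fibre at $E$ is $H^{0}(X,\Sym^{2}E^{*})$. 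As $\deg \Sym^{2}E^{*} = \frac{n(n+1)}{2}(g-1)$ and $\rank \Sym^{2}E^{*} = \frac{n(n+1)}{2}$, we have $\chi(\Sym^{2}E^{*})=0$, so the fibre is finite precisely when $H^{0}(X,\Sym^{2}E^{*})=0$.

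The main obstacle is therefore the vanishing $H^{0}(X,\Sym^{2}E^{*})=0$ for a general stable $E$ of degree $d$, together with the semistability of the general extension. Although $\chi(\Sym^{2}E^{*})=0$ makes this vanishing the expected behaviour, $\Sym^{2}E^{*}$ is a very special bundle of positive slope $g-1$, so the vanishing lies deeper than any Clifford-type estimate; in particular a naive degeneration of $E$ to a direct sum of line bundles is useless here, since such a bundle typically acquires extra sections and semicontinuity then bounds $h^{0}$ only in the wrong direction. I expect the vanishing to follow from a general-position (Hirschowitz-type) argument, which is exactly the sort of statement deferred to \S5. Granting it, $\Phi$ is dominant and generically finite, so a general $W \in \cM_{2n}$ is a symplectic extension of $E^{*}$ by a bundle $E$ general in $\cU(n,d)$; this $E$ is the sought Lagrangian subbundle, of degree $d \le -\frac{1}{2}n(g-1)$ and general as a vector bundle. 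Finally, when $n(g-1)$ is odd the integer $d$ is one less than $-\frac{1}{2}n(g-1)$, so $\dim \mathcal{E}$ exceeds $\dim \cM_{2n}$ and $\Phi$ is dominant with positive-dimensional fibres, which still yields the existence statement.
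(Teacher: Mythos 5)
Your route is genuinely different from the paper's: you fix the degree $d\approx -\tfrac{1}{2}n(g-1)$ in advance, form the irreducible family $\mathcal{E}$ of symplectic extensions over the stable locus of $\cU(n,d)$, and try to prove the classifying map $\Phi\colon \mathcal{E}\dashrightarrow \cM_{2n}$ is dominant by a fibre-dimension count. The bookkeeping is right, and the vanishing $H^{0}(X,\Sym^{2}E^{*})=0$ you need for generic finiteness is indeed the Hirschowitz-type statement proved in \S 5.2 (Lemma \ref{vanishing} applied to $F=E^{*}$, which has degree $\tfrac{1}{2}n(g-1)$). But there is a genuine gap at the step you wave through as ``a standard general-extension argument'': you must know that some --- equivalently a general --- extension in $\mathcal{E}$ is a \emph{semistable} symplectic bundle, since otherwise $\Phi$ is nowhere defined and no fibre count can yield dominance. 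This is not routine: a destabilizing subbundle $F\subset W$ need neither sit inside $E$ nor inject into $E^{*}$, and bounding its degree requires controlling which subsheaves of $E^{*}$ lift to $W$ for general $\delta$, i.e.\ precisely the secant-variety machinery of \S 2 and \S 4. The paper establishes this semistability only for $n=2$ (Lemma \ref{semistability}), and that proof invokes Theorem \ref{thm1} and Theorem \ref{segsec}, both of which sit downstream of Lemma \ref{structure}; so as written your argument is circular in rank four and unsupported in higher rank.

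The paper's own proof sidesteps both of your hard steps. It first shows that \emph{every} symplectic bundle admits a Lagrangian subbundle (a rational reduction to the Lagrangian parabolic extends over a curve), so a general $W_{0}\in\cM_{2n}$ already comes equipped with a Lagrangian $E_{0}$ of some degree $-e$. It then takes a versal deformation of $E_{0}$ and the universal symplectic extension family over the locus where $h^{0}(X,\Sym^{2}E)=0$ (which contains $E_{0}$, since a nonzero map $E_{0}^{*}\to E_{0}$ would produce a nilpotent endomorphism of the stable bundle $W_{0}$). Dominance of the classifying map is then automatic, because the general point $W_{0}$ is already in its image; hence a general $W$ acquires a Lagrangian subbundle general in $\cU(n,-e)$, and your same dimension count, read as the inequality $\dim\mathcal{E}\geq\dim\cM_{2n}$, forces $e\geq\tfrac{1}{2}n(g-1)$. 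If you want to keep your architecture, the cleanest repair is to import that first step: knowing a general $W$ has \emph{some} Lagrangian subbundle, the finitely many loci of bundles admitting one of degree $>-\tfrac{1}{2}n(g-1)$ each have dimension $<\dim\cM_{2n}$ by your count, which already gives the lemma without ever proving that a general member of your family $\mathcal{E}$ is semistable.
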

\begin{proof}
First we show that every symplectic bundle $W$ has a Lagrangian
subbundle. Let $U \subset X$ be an open set over which $W$ is
trivial. Any Lagrangian subspace of $\cc^{2n}$ yields a Lagrangian
subbundle of $W|_U$. This corresponds to a rational reduction of
structure group of the associated $\Spn$-bundle of $W$ to the
maximal parabolic subgroup preserving a fixed  Lagrangian subspace.
Since $X$ has dimension one, this can be extended to a reduction of
structure group over the whole of $X$, giving a Lagrangian subbundle
of $W$ (cf.\ Hartshorne \cite[Proposition I.6.8]{Har}).

Now let $W_0$ be a general symplectic bundle. Since $W_0$ admits a
Lagrangian subbundle, say $E_0$, we have an extension
\[ 0 \to E_{0} \to W_{0} \to E_{0}^{*} \to 0 \]
with class $\delta(W_{0}) \in H^{1}(X, \Sym^{2}E_{0})$. Now it is
well known that there is a versal deformation $\cE \to S \times X$
of $E_0$ parameterized by an irreducible variety $S$, whose general
member is a general stable bundle. Consider the direct image $R^1
q_* (\Sym^2 \cE)$, where $q \colon S \times X \to S$ is the
projection. This sheaf is locally free over the open subset
$S^\circ$ of $S$ consisting of the bundles $E$ with $h^0(X, \Sym^2
E) = 0$. We claim that $E_0$ belongs to $S^\circ$: Indeed, a nonzero
map $E_0^* \to E_0$ would induce an endomorphism of $W$ given
by composition $W_0 \to E_0^* \to E_0 \to W_0$. This is clearly
nonzero and nilpotent. But this contradicts the stability of $W$.


The associated projective bundle $\pp (R^1 q_* (\Sym^2
\cE)|_{S^{\circ}})$ over $S^\circ$ will be denoted $\pi \colon P
\to S^\circ$. By Lange \cite[Corollary 4.5]{Lan}, there is an exact
sequence of bundles over $P \times X$:
\[ 0 \to (\pi \times \Iden_{X})^{*}\cE \otimes p^{*}\cO_{P}(1) \to \cW \to (\pi \times \Iden_{X})^{*}\cE^{*} \to 0, \]
where $p \colon P \times X \to P$ is the projection, with the
property that for each $\delta$ in the fiber $\pi^{-1}([E])$ in
$P$, the restriction $\cW|_{\{ \delta \} \times X}$ is isomorphic to
the symplectic extension of $E^*$ by $E$ defined by $\delta$.
From the construction of $S^\circ$, a general point of $P$ corresponds to a
symplectic bundle admitting a Lagrangian subbundle which is
general as a vector bundle.

Finally we check that $\deg(E) =: -e \leq -\frac{1}{2} n(g-1)$. From
the above argument, the dimension of the space of symplectic bundles
admitting a Lagrangian subbundle of degree $-e$ is bounded above by
\[
\dim \cU(n,-e) + h^{1}(X, \Sym^{2}E) - 1 \ \ = \ \  n^{2}(g-1)+1 +
(n+1)e + \frac{1}{2}n(n+1)(g-1)  - 1.
\]
Since this should be at least $ \dim \cM_{2n} = n(2n+1)(g-1)$, we
obtain $e \ge \frac{n(g-1)}{2}$.
\end{proof}

To prove Theorem \ref{thm1}, it suffices to show that a general $W
\in \cM_{2n}$ has a Lagrangian subbundle of degree at least $-
\left\lceil \frac{1}{2}n(g-1) \right\rceil$. By Lemma
\ref{structure}, we know that $W$ has a Lagrangian subbundle $E$ which is
general in $\cU(n, -e)$.

By Lemma \ref{psiemb} (1), we may assume the map $\varphi \colon \pp(E \otimes E)
\to \pp H^{1}(X, E \otimes E)$ is an embedding for sufficiently large $e$. Consider its
restriction $\psi \colon \pp E \to \pp := \pp H^1(X, \Sym^2 E)$. By
Theorem \ref{seginvsec}, if the class $\delta(W) \in \pp$ belongs to the subvariety $\Sec^{k} \pp E$, then $W$ has
a Lagrangian subbundle of degree at least $e-k$. For the moment,
suppose that $\pp E$ has no secant defect in $\pp $ so that for each
$k \ge 1$, we have
\begin{eqnarray*}
\dim \Sec^k \pp E &=& \min \{ k  \dim \pp E + k  - 1, \ \dim \pp \}\\
&=& \min \{ k(n+1), (n+1)e + \frac{1}{2}n(n+1)(g-1) \} \ -1.
\end{eqnarray*}
By straightforward computation, $\Sec^k \pp E = \pp $ if $k \ge k_0
:= e + \left\lceil \frac{ 1 }{2} n(g-1) \right\rceil.$ This implies that $W$ has
a Lagrangian subbundle of degree at least $-\left\lceil\frac{ 1 }{2}
n(g-1)\right\rceil$, as was desired.

Hence the proof will be completed once we show the following.

\begin{proposition} \label{defect}
For a general bundle $E \in \cU(n, -e)$, $ e \gg 0$, the subvariety
$\pp E$ of $\pp := \pp H^1(X, \Sym^2 E)$ has no secant defect. \qed
\end{proposition}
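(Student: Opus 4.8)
The plan is to apply the Terracini Lemma to the embedded variety $\psi(\pp E) \subset \pp = \phoe$, whose image has dimension $n = \dim \pp E$. Since the expected dimension of $\Sec^{k}\pp E$ is $\min\{k(n+1)-1,\ \dim \pp\}$, it suffices to prove that for $k$ \emph{general} points $p_{1}, \dots, p_{k}$ of $\pp E$ the affine tangent spaces $\hat T_{1}, \dots, \hat T_{k}$ to the cone over $\psi(\pp E)$ span a subspace of $\hoe$ of the maximal dimension $\min\{k(n+1),\ h^{1}(X, \Sym^{2}E)\}$. First I would compute these tangent spaces using the principal part description of $\psi$ from the end of \S\ref{alternative}. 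Writing $p_{i} = (x_{i}, v_{i})$ with $v_{i} \in E|_{x_{i}}$ nonzero and $z_{i}$ a local coordinate at $x_{i}$, a first-order deformation either moves $v_{i}$ within the fibre or moves the support point $x_{i}$ along $X$, which gives
\[ \hat T_{i} = \left\langle \left[ \tfrac{v_{i} \otimes w + w \otimes v_{i}}{z_{i}} \right] : w \in E|_{x_{i}} \right\rangle + \left\langle \left[ \tfrac{v_{i}\otimes v_{i}}{z_{i}^{2}} \right] \right\rangle, \]
a space of dimension $n+1$ once $\psi$ is an immersion, which holds for general $E$ with $e \gg 0$ by an argument parallel to Lemma \ref{psiemb}.

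Next I would dualise. By Serre duality $\hoe^{\vee} \cong H^{0}(X, \Sym^{2}E^{*}\otimes \Kx)$, and evaluating the residue pairing against the two kinds of principal parts shows that a section $\phi$, regarded as a symmetric homomorphism $\phi\colon E \to E^{*}\otimes \Kx$, annihilates $\hat T_{i}$ exactly when (a) $v_{i} \in \Ker(\phi_{x_{i}})$, which is $n$ linear conditions, and (b) the function $z_{i} \mapsto \phi(v_{i}, v_{i})$ vanishes to second order at $x_{i}$, one further condition. Thus the tangent spaces are in general position if and only if these $k(n+1)$ conditions on $H^{0}(X, \Sym^{2}E^{*}\otimes \Kx)$ are independent; in the extremal case $k = k_{0} = e + \lceil \tfrac{1}{2}n(g-1)\rceil$ this says that \emph{no} nonzero symmetric $\phi$ has $v_{i}$ in its kernel and satisfies (b) at all $k_{0}$ general points, which is precisely the assertion $\Sec^{k_{0}}\pp E = \pp$ needed in \S\ref{Hirschowitz}.

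I would then prove independence by induction on $k$, adding one general point at a time. The inductive step reduces to a maximal-rank (surjectivity) statement for the $1$-jet evaluation map restricted to the subspace $V \subset H^{0}(X, \Sym^{2}E^{*}\otimes \Kx)$ of sections already satisfying the conditions at the previous points: a general new point $(x,v)$ should impose $\min\{n+1,\ \dim V\}$ further independent conditions. Here the hypotheses enter decisively. The assumption $e \gg 0$ makes $\Sym^{2}E^{*}\otimes \Kx$ very positive, so that it has abundant global sections and separates the relevant jets, while the genericity of $E$ supplies the absence of low-degree line subbundles and the non-effectivity of bundles such as $(\det E)^{\pm 2}$, exactly as exploited in Lemma \ref{psiemb} and governed by the Segre bound (\ref{s_1}) and Proposition \ref{Hir}. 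To verify the maximal-rank condition I would specialise the configuration of points, equivalently degenerate the relevant family of symmetric principal parts, to a position where independence is transparent and then pass to the general configuration by semicontinuity; this is where the irreducibility of the space of symmetric principal parts defining elementary transformations of fixed degree (\S5.1) and the variant of Hirschowitz' lemma (\S5.2) are used.

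The main obstacle is precisely this surjectivity step, and I expect it to be genuinely delicate rather than formal. A naive incidence-variety count is inconclusive, because the danger comes not from generically nondegenerate $\phi$ (whose kernel locus sits on the finite degeneracy divisor and contributes nothing after imposing (b)), but from symmetric homomorphisms $\phi\colon E \to E^{*}\otimes \Kx$ of \emph{deficient generic rank}: for such $\phi$ the kernel locus inside $\pp E$ is positive-dimensional and could a priori absorb many of the $k_{0}$ general points, producing a secant defect. It is exactly the genericity of $E$ that must rule these out, since a symmetric $\phi$ of low generic rank forces a destabilising sub- or quotient-structure on $E$ incompatible with the Segre/Hirschowitz bounds. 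Controlling this quadratic, relative-Veronese phenomenon, rather than the linear-scroll situation of \cite[Theorem 4.4]{CH} recalled in Remark \ref{ordinarylifting}, is the crux; once Proposition \ref{defect} is in hand, combining it with Theorem \ref{seginvsec} and the secant dimension count of \S\ref{Hirschowitz} yields Theorem \ref{thm1}.
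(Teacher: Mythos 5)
Your outline follows the paper's strategy in its broad architecture --- Terracini, the $(n+1)$-dimensional affine tangent space at $(x_i,v_i)$ spanned by the classes of $\frac{v_i\otimes w + w\otimes v_i}{z_i}$ and $\frac{v_i\otimes v_i}{z_i^2}$, and a final appeal to the variant of Hirschowitz' lemma of \S5.2 --- but you then take a harder road than necessary and stop short of completing it. The paper's key simplification (Lemma \ref{tangent} together with the proof of Proposition \ref{defect}) is that the affine tangent space at $v_i$ is exactly $\Ker\left( H^1(X,\Sym^2 E) \to H^1(X, \Sym^2 E_i)\right)$, where $E_i$ is the elementary transformation of $E$ at $(x_i,v_i)$; consequently the span of all $k$ tangent spaces is $\Ker\left(H^1(X,\Sym^2 E)\to H^1(X,\Sym^2 F)\right)$ for the single simultaneous elementary transformation $F$, and its dimension is $k(n+1)-h^0(X,\Sym^2 F)$ since $h^0(X,\Sym^2 E)=0$. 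Because $F$ is a general elementary transformation of a general bundle, it is general in $\cU(n,-e+k)$, and Lemma \ref{vanishing} and Corollary \ref{dimension} give exactly the value of $h^0(X,\Sym^2 F)$ required. This dissolves the step you single out as the crux: no point-by-point induction, no maximal-rank statement for $1$-jet evaluation maps, and no analysis of symmetric $\phi\colon E\to E^*\otimes K_X$ of deficient generic rank is needed inside the Terracini computation --- all of that difficulty is delegated wholesale to the nonspeciality of $\Sym^2 F$, whose proof in \S5.2 proceeds by a deformation argument and induction on rank rather than by controlling kernel loci. As written, your proposal is not yet a proof: the surjectivity of the jet-evaluation maps, which you correctly flag as decisive, is only asserted to be provable by specialization and semicontinuity, and your induction has a genuine wrinkle at the final steps (when $\dim V<n+1$ one must show a general new point kills all of $V$, which does not follow from a naive incidence count, for exactly the low-rank reason you raise). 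Passing to the one-shot identification with $\Ker\left(H^1(X,\Sym^2 E)\to H^1(X,\Sym^2 F)\right)$ both closes that gap and turns the role of the \S5.2 lemma from heuristic into exact.
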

The rest of this section is devoted to the proof of Proposition
\ref{defect}. First we recall the Terracini Lemma \cite{Ter}:
\begin{lemma} Let $Z \subset \pp^N$ be a projective variety and
let $z_{1}, \ldots, z_k$ be general points of $Z$. Then $\dim
\Sec^{k} Z  = \dim \langle \TT_{z_1} Z, \ldots, \TT_{z_k} Z
\rangle$, where $\TT_{z_i}Z$ denotes the embedded tangent space to
$Z$ in $\pp^N$ at $z_i$. \qed
\end{lemma}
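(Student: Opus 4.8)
The plan is to run the classical differential-geometric argument, reducing the statement to a tangent-space computation on the affine cone together with generic smoothness, which is available since we work over $\cc$.

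First I would pass to affine cones. Let $\widehat{Z} \subset \C^{N+1}$ be the affine cone over $Z$, and consider the parametrization of the affine secant
\[ \Phi \colon \widehat{Z}^{\times k} \times \C^{k} \to \C^{N+1}, \qquad \Phi\big((z_{1}, \ldots, z_{k}), (\lambda_{1}, \ldots, \lambda_{k})\big) = \sum_{i=1}^{k} \lambda_{i} z_{i}. \]
Its image is dense in the affine cone $\widehat{\Sec^{k} Z}$ over $\Sec^{k} Z$, since a general point of a general $k$-secant $\langle z_{1}, \ldots, z_{k} \rangle$ is of the form $\sum \lambda_{i} z_{i}$ with all $\lambda_{i} \neq 0$. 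In particular a general point of $\widehat{\Sec^{k} Z}$ comes from general $z_{1}, \ldots, z_{k} \in \widehat{Z}$, so these may be taken to be cones over general points of $Z$.

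Next I would compute the differential of $\Phi$ at such a general point. Differentiating a curve $(z_{i}(t), \lambda_{i}(t))$ gives
\[ \dot{p} = \sum_{i=1}^{k} \dot{\lambda}_{i}\, z_{i} + \sum_{i=1}^{k} \lambda_{i}\, \dot{z}_{i}, \]
where $\dot{z}_{i}$ ranges over the tangent space $T_{z_{i}} \widehat{Z}$ and $\dot{\lambda}_{i}$ over $\C$. Since $\widehat{Z}$ is a cone, each $T_{z_{i}} \widehat{Z}$ already contains the line $\C z_{i}$, and $\lambda_{i} \neq 0$ at a general point; hence the image of $d\Phi$ is exactly
\[ \sum_{i=1}^{k} \C z_{i} + \sum_{i=1}^{k} \lambda_{i}\, T_{z_{i}} \widehat{Z} = \sum_{i=1}^{k} T_{z_{i}} \widehat{Z}. \]
As $T_{z_{i}} \widehat{Z}$ is the affine cone over the embedded tangent space $\TT_{z_{i}} Z$, projectivizing identifies this sum with the linear span $\langle \TT_{z_{1}} Z, \ldots, \TT_{z_{k}} Z \rangle$.

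Finally I would invoke generic smoothness. Because $\mathrm{char}\,\cc = 0$, the dominant map $\Phi$ onto $\widehat{\Sec^{k} Z}$ is smooth over a dense open subset; thus at a general point of the source the rank of $d\Phi$ equals $\dim \widehat{\Sec^{k} Z} = \dim \Sec^{k} Z + 1$. Combining with the computation above yields
\[ \dim \Sec^{k} Z = \dim \sum_{i=1}^{k} T_{z_{i}} \widehat{Z} - 1 = \dim \langle \TT_{z_{1}} Z, \ldots, \TT_{z_{k}} Z \rangle, \]
which is the assertion. The one delicate step — and the only place characteristic zero enters — is this generic-smoothness input: it is what guarantees that the rank of $d\Phi$ actually \emph{computes} $\dim \Sec^{k} Z$ rather than merely bounding it, and it is precisely this that can fail in positive characteristic. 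The remaining care is bookkeeping: checking that the general source point maps to a smooth point of $\Sec^{k} Z$ and that the underlying $z_{i}$ stay general on $Z$, so that the $\TT_{z_{i}} Z$ are honest embedded tangent spaces at smooth points.
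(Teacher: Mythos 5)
Your proof is correct. One point of comparison is moot: the paper does not prove this lemma at all --- it is the classical Terracini Lemma, stated with a citation to Terracini's original paper \cite{Ter} and no argument --- so your write-up is supplying the standard modern proof rather than paralleling anything in the text. That standard proof is exactly what you give: parametrize the cone over the secant variety by
\[ \Phi\big((z_{1},\ldots,z_{k}),(\lambda_{1},\ldots,\lambda_{k})\big)=\sum_{i}\lambda_{i}z_{i}, \]
compute that the image of $d\Phi$ at a point with all $\lambda_{i}\neq 0$ is $\sum_{i}T_{z_{i}}\widehat{Z}$ (using that each $T_{z_{i}}\widehat{Z}$ is a linear cone containing $\cc z_{i}$, so the $\dot{\lambda}_{i}$-directions are absorbed), and then use generic smoothness in characteristic zero to identify the generic rank of $d\Phi$ with $\dim\widehat{\Sec^{k}Z}=\dim\Sec^{k}Z+1$. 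You correctly flag the two delicate points: generic smoothness is where characteristic zero enters (without it one only gets the inequality $\dim\Sec^{k}Z\le\dim\langle\TT_{z_{1}}Z,\ldots,\TT_{z_{k}}Z\rangle$, and the equality genuinely fails in positive characteristic), and one must take the $z_{i}$ in the smooth locus of $Z$ --- automatic for general points --- so that the $T_{z_{i}}\widehat{Z}$ are honest tangent spaces and restricting $\Phi$ to $(\widehat{Z}_{\mathrm{sm}}\setminus\{0\})^{\times k}\times\cc^{k}$ gives a smooth source on which generic smoothness applies. A small simplification you could note: since $\widehat{Z}$ is stable under scaling, the parameters $\lambda_{i}$ are redundant and one may simply use $(z_{1},\ldots,z_{k})\mapsto\sum_{i}z_{i}$; the computation of the differential is then one line, as $\cc z_{i}\subseteq T_{z_{i}}\widehat{Z}$ already. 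As used in the paper (\S 3, in the proof of Proposition \ref{defect}), only the statement for general tuples is needed, which is exactly what your argument establishes.
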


To apply the Terracini Lemma,  let us find a description of the
embedded tangent spaces of $\pp E \subset \pp$. Let $v$ be a nonzero
vector of $E|_x$, and consider the elementary transformation $0 \to
E \to \hat{E} \to \C_{x} \to 0$ such that the kernel of $E|_{x} \to
\hat{E}|_x$ is spanned by $v$. Consider the induced elementary
transformation
\[
0 \to \Sym^{2}E \to \Sym^{2} \hat{E} \to \tau \to 0
\]
where $\tau$ is a torsion sheaf of degree $(n+1)$.

\begin{lemma} \label{tangent} For a general bundle $E \in \cU(n, -e)$, with $ e \gg 0$,
the embedded tangent space to $\pp E$ at $v$ in $\pp$ is given by
\[ \TT_{v} (\pp E) \ = \ \pp \Ker \left( H^{1}(X, \Sym^{2}E) \to H^{1}
( X, \Sym^{2} \hat{E} ) \right). \]
\end{lemma}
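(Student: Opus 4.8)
The plan is to describe the differential of the embedding $\psi \colon \pp E \to \pp := \phoe$ at the point $\psi(v)$ in terms of principal parts, and then to match the resulting projective span with the kernel appearing in the statement. Throughout I use the identification from \S\ref{alternative}: $\psi$ sends $v \in E|_x$ to the class $[\frac{v \otimes v}{z}] \in H^1(X, \Sym^2 E)$ of a $\Sym^2 E$-valued principal part supported at $x$ with a simple pole along $v \otimes v$. Since $e \gg 0$ and $E$ is general, Lemma \ref{psiemb}(1) guarantees that $\varphi$, and hence its restriction $\psi$, is an embedding, so the embedded tangent space $\TT_v(\pp E)$ is the projective subspace spanned by $\psi(v)$ together with the image of the differential $d\psi_v$; the whole computation then takes place with affine representatives in $\hoe$.

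First I would compute $d\psi_v$ by splitting the tangent space of $\pp E$ at $v$ into vertical (fibrewise) and horizontal (base) directions. A vertical tangent vector is represented by $w \in E|_x$, and differentiating $t \mapsto [\frac{(v + tw) \otimes (v+tw)}{z}]$ gives the class $[\frac{v \otimes w + w \otimes v}{z}]$; as $w$ ranges over $E|_x$ these sweep out the simple-pole classes along the $n$-dimensional subspace $v \otimes E|_x + E|_x \otimes v$ of $\Sym^2 E|_x$. A horizontal tangent vector comes from moving the support point: choosing a local section $\sigma$ with $\sigma(x) = v$ and differentiating $t \mapsto [\frac{\sigma \otimes \sigma}{z - t}]$ produces the class $[\frac{\sigma \otimes \sigma}{z^2}]$, whose polar part is $\frac{v \otimes v}{z^2} + \frac{c}{z}$ with $c = v \otimes \sigma'(x) + \sigma'(x) \otimes v$ (up to a factor) a vertical term. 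Thus, modulo the vertical directions, the horizontal direction is the double-pole class $[\frac{v \otimes v}{z^2}]$, and I conclude
\[ \TT_v(\pp E) = \pp \langle \; [\tfrac{v\otimes w + w \otimes v}{z}] \ (w \in E|_x), \ \ [\tfrac{v \otimes v}{z^2}] \; \rangle. \]

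Next I would identify $\Ker(H^1(X,\Sym^2 E) \to H^1(X,\Sym^2 \hat{E}))$ with this same span. Choosing a local frame $e_1, \ldots, e_n$ of $E$ at $x$ with $e_1(x) = v$, so that $\hat{E}$ is generated by $\frac{e_1}{z}, e_2, \ldots, e_n$, a direct local computation shows that the torsion sheaf $\tau = \Sym^2 \hat{E} / \Sym^2 E$ is concentrated at $x$ of length $n+1$, with $H^0(X,\tau)$ freely generated by the images of $\frac{e_1 \otimes e_1}{z^2}$, $\frac{e_1 \otimes e_1}{z}$ and $\frac{e_1 \otimes e_j}{z}$ for $2 \le j \le n$. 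The crucial point is that under $\Sym^2 E \subset \Sym^2 \hat{E} \subset \sRat(\Sym^2 E)$ the sheaf $\tau$ sits inside $\sPrin(\Sym^2 E)$, so the connecting map $\partial \colon H^0(X,\tau) \to H^1(X, \Sym^2 E)$ coincides with the principal-part class map $p \mapsto [p]$; its image, which is exactly the desired kernel, is therefore spanned by $[\frac{e_1\otimes e_1}{z^2}]$, $[\frac{e_1 \otimes e_1}{z}]$ and $[\frac{e_1 \otimes e_j}{z}]$. Since $e_1(x) = v$ these are precisely the classes found above, after replacing each $e_j$ by its value $e_j(x)$ (which alters the principal part only by a regular term), because $\{v \otimes v\} \cup \{v \otimes e_j(x) + e_j(x) \otimes v\}$ is a basis of $v \otimes E|_x + E|_x \otimes v$.

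Comparing the two spans yields the lemma. To know that both sides have the expected projective dimension $n$ — so that the matching of spans is genuinely an equality of the tangent space and the kernel — I would invoke $e \gg 0$ and the generality of $E$ once more: these force $H^0(X,\Sym^2 E) = H^0(X, \Sym^2 \hat{E}) = 0$, whence $\partial$ is injective and $\dim \pp\Ker(\cdots) = n = \dim \pp E = \dim \TT_v(\pp E)$. The main obstacle is the horizontal-direction computation: one must verify that moving the base point genuinely contributes the double-pole class $[\frac{v \otimes v}{z^2}]$, and check that the ambiguity introduced by the choice of local section $\sigma$ lands in the vertical directions already accounted for — this is exactly what makes the two spans coincide on the nose.
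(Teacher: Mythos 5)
Your proof is correct, but it takes a genuinely different route from the paper's. The paper never computes the differential of $\psi$ directly: it invokes the description of the embedded tangent space to the decomposable locus $\Delta \subset \pp H^{1}(X, E \otimes E)$ from \cite[Lemma 5.3]{CH}, namely $\TT_{v \otimes v}\Delta = \pp \Ker(f)$ for $f \colon H^{1}(X, E \otimes E) \to H^{1}(X, \hat{E} \otimes \hat{E})$, and then uses the inclusion $\pp E \subset \Delta$ together with the commutative square relating $f$ and $g$ to obtain only the one-way containment $\TT_{v}(\pp E) \subseteq \pp \Ker(g)$; equality is then forced by the dimension count $\dim \Ker(g) = h^{1}(X,\Sym^{2}E) - h^{1}(X,\Sym^{2}\hat{E}) = n+1$, which uses semistability of $\Sym^{2}E$ and $\Sym^{2}\hat{E}$ of negative degree to kill the $h^{0}$ terms. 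You instead identify both sides explicitly as the span of the classes $\left[\frac{v \otimes w + w \otimes v}{z}\right]$ and $\left[\frac{v \otimes v}{z^{2}}\right]$, in effect redoing for $\pp E \subset \pp \Sym^{2}E$ the local principal-part computation that \cite{CH} carries out for $\Delta$. Your version is self-contained and makes the geometry visible (the double pole as the horizontal, osculating direction, exactly as for the canonical curve), at the cost of the frame bookkeeping and the checks that the ambiguities in the choice of $\sigma$ and of the coordinate $z$ land in the vertical span — checks you correctly flag and which do go through. The paper's version is shorter but leans on the external lemma and still needs essentially the same $h^{0}$-vanishing for its final dimension count, so the two arguments rest on the same generality hypotheses.
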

\begin{proof} Recall that $\Delta$ is the subvariety of $\pp (E \otimes E) \subset \pp H^1(X, E \otimes E)$ consisting of decomposable vectors. We have the commutative diagram
\begin{equation} \label{fandg}
\begin{CD}
 H^1(X, E \otimes E) @>f>>  H^{1}(X, \hat{E} \otimes \hat{E})  @>>> 0 \\
  @AAA  @AAA  @.\\
 H^{1}(X, \Sym^{2}E) @>g>>  H^{1} ( X, \Sym^{2} \hat{E} )  @>>> 0,
\end{CD}
\end{equation}
where the vertical arrows are inclusions. By \cite[Lemma 5.3]{CH},
the embedded tangent space of $\Delta$ at $v \otimes v$ in $\pp H^1(X, E \otimes E)$ is given by
$\TT_{v \otimes v} \Delta = \pp (\Ker f)$. From the inclusion $\pp E
\subset \Delta$ and the above diagram (\ref{fandg}), we get
\[ \TT_{v} \pp E \subseteq \TT_{v \otimes v} \Delta \ \cap \ \pp H^{1}(X,\Sym^{2}E) = \pp \Ker (f) \ \cap \ \pp H^{1}(X,\Sym^{2}E) = \pp \Ker (g). \]
Thus it suffices to show that $\pp \Ker (g)$ has
dimension $n = \dim \pp E $. Since $E$ is general, we may assume $\hat{E}$ is
stable. Then $\Sym^{2} E$ and $\Sym^{2}\hat{E}$ are semistable of
negative degree, so they have no nonzero sections. Thus $\Ker(g)$ has
dimension
\[
h^{1}(X, \Sym^{2}E ) - h^{1}(X, \Sym^{2}\hat{E}) \ = \ \deg
(\Sym^{2}\hat{E}) - \deg (\Sym^{2} E) \ = \ n+1,
\]
as desired. \end{proof}

\noindent \textit{Proof of Proposition \ref{defect}}. \ Let $F$ be
the general elementary transformation of $E$ associated to the
points $v_{1}, \ldots , v_k \in \pp E$. In other words, $F$ fits
into an exact sequence
\begin{equation} \label{elmEF}
0 \to E \to F \to \bigoplus_{i=1}^{k} \C_{x_i} \to 0,
\end{equation}
for $k$ distinct points $x_1, \ldots, x_k$ such that $v_i$ lies over
$x_i$. For each $i$, let $E_i$ be the intermediate sheaf ($ E
\subset E_i \subset F$) defined by the elementary
transformation associated to the $v_i$:
\[
0 \to E \to E_i \to \cc_{x_i} \to 0.
\]
By the Terracini Lemma and Lemma \ref{tangent}, we see that the
dimension of the embedded tangent space of $\pp E \subset
\pp$ at $v_i$ is given by the dimension of the linear span of the union of
\begin{equation}
\pp \Ker \left( H^{1}(X, \Sym^{2}E) \to H^{1}(X, \Sym^{2}E_i )
\right) \label{interker} \end{equation}
for $1 \le i \le k$. Since
$F$ is precisely the intersection of all the $E_i$ inside $E$, the
linear span of the spaces (\ref{interker}) coincides with
\[
\pp \Ker \left( H^{1}(X, \Sym^{2}E) \to H^{1}(X, \Sym^{2} F )
\right).
\]
Thus now it remains to show
\[
\dim \Ker \left( H^{1}(X, \Sym^{2}E) \to H^{1}(X, \Sym^{2}F )
\right) \ = \ \min \{ k(n+1), (n+1)e + \frac{1}{2}n(n+1)(g-1) \}.
\]
From the vanishing of $H^{0}(X, \Sym^{2}E)$, one checks that the
left hand side equals $k(n+1) - h^0(X, \Sym^2 F)$. Since $F$ is
obtained from a general elementary transformation of a general
bundle $E$, it is a general element of $\cU(n, -e + k)$. Hence the
required equality follows from  the result on $ \dim H^0(X, \Sym^2
F)$ which will be discussed in Lemma
\ref{vanishing} and Corollary \ref{dimension} in \S5.2. \qed

\begin{remark} (Hirschowitz bound for $\Gpn$-bundles)

The statement of Theorem \ref{thm1} is easily generalized to principal $\Gpn$-bundles. Recall
that the group $\Gpn$ of \emph{conformally symplectic
transformations} is the image of the multiplication map $\Spn \times
\cc^{*} \to \mathrm{\GL}_{2n}$. A principal $\Gpn$-bundle corresponds
to a vector bundle of rank $2n$ carrying a symplectic form with
values in a line bundle $L$ which may be different from $\cO_X$;
equivalently, admitting an antisymmetric isomorphism $W
\xrightarrow{\sim} W^{*} \otimes L$. Such $W$ has determinant $L^n$
and hence $\deg W = n \deg(L)$. (See Biswas--Gomez
\cite{BG} for more details on $\Gpn$-bundles.)

If $E \subset W$ is a Lagrangian subbundle then, by \cite[Criterion 2.1]{Hit2}, we get an extension
\[ 0 \to E \to W \to \Homom(E, L) \to 0 \]
with class $\delta(W) \in H^{1}(X, L^{-1} \otimes \Sym^{2} E)$, and
conversely. Arguing as
above, one can show that $W$ admits a Lagrangian subbundle of degree
at least $- \left\lceil \frac{1}{2}n \left( g - 1 - \deg(L) \right) \right\rceil$.
\end{remark}

\section{Geometry of the strata in rank four}

In this section, we prove Theorem \ref{mainthm} on the geometry of
the strata on $\cM_{4}$. For the case of genus 2, it has already
been proven in \cite{Hit3}. {\bf Throughout this section,
we assume $ \bf g \ge 3$}.

\subsection{Symplectic extensions of rank four}

First we study more details on symplectic extensions of rank four.
The goal is to prove the converse of the statement in Theorem
\ref{seginvsec} in the case of rank four. As was mentioned in Remark
\ref{converse}(2), we need to know how often diagrams of the form
(\ref{exceptional}) appear.

For $e \in \{ 1, 2, \ldots, g-1 \}$, let $E \to X$ be a vector
bundle of rank two and degree $-e$. Assume that $E$ is general, so $g-1 \le s_1(E) \le g$ as was remarked in (\ref{s_1}).
Equivalently, assume that any line subbundle of $E$ has degree at most
$\frac{1}{2}( -e - g + 1 )$.

\begin{lemma} \label{transverse}
Let $E$ be as above and consider an extension $0 \to E \to W \to
E^{*} \to 0$ (not necessarily symplectic). Let $F$ be a rank two
subbundle of $W$ such that the intersection of $E$ and $F$ is
generically of rank one. Then
\begin{enumerate}
\item $\deg F \leq -(g-1)$.
\item If $\deg E = \deg F = -(g-1)$, then the intersection of $E$ and $F$
is a line subbundle of  $W$ of degree $-(g-1)$.
\end{enumerate}
\end{lemma}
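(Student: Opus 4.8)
The plan is to encode the hypothesis that $E \cap F$ has generic rank one as a diagram of the shape (\ref{exceptional}), and then to bound its two rank-one constituents separately, each by means of the Segre invariant $s_1(E)$.

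First I would set $G := \Ker(F \to E^{*})$, where $F \to E^{*}$ denotes the composite $F \hookrightarrow W \twoheadrightarrow E^{*}$, and let $H := \mathrm{Im}(F \to E^{*})$. Since $E = \Ker(W \to E^{*})$, we have $G = E \cap F$, which is generically of rank one by hypothesis; as $\rank F = 2$, the image $H$ is then of rank one as well. This yields exactly the commutative diagram (\ref{exceptional}) with $\rank G = \rank H = 1$, together with a short exact sequence $0 \to G \to F \to H \to 0$, so that $\deg F = \deg G + \deg H$.

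Next I would bound the two summands. The sheaf $G$ is a rank-one subsheaf of $E$, so its saturation is a line subbundle of $E$; since $E$ is general we may invoke $s_1(E) \ge g-1$ to obtain $\deg G \le \frac{1}{2}(\deg E - s_1(E)) \le \frac{1}{2}(-e - g + 1)$. For $H$, I would first note that $H$ is a rank-one subsheaf of $E^{*}$, and that for a rank-two bundle $s_1(E^{*}) = s_1(E)$ --- because $E^{*} \cong E \otimes (\det E)^{-1}$ and $s_1$ is unchanged under twisting by a line bundle. Hence the saturation of $H$ is a line subbundle of $E^{*}$, and $\deg H \le \frac{1}{2}(\deg E^{*} - s_1(E^{*})) \le \frac{1}{2}(e - g + 1)$. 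Adding the two estimates, the dependence on $e$ cancels and $\deg F \le -(g-1)$, which is part (1).

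For part (2), I would analyse the equality case. When $\deg E = \deg F = -(g-1)$, that is $e = g-1$, the two bounds above specialise to $\deg G \le -(g-1)$ and $\deg H \le 0$, while $\deg G + \deg H = -(g-1)$; this forces $\deg G = -(g-1)$ and $\deg H = 0$. Since $\deg G$ then attains the Segre bound for line subbundles of $E$, the subsheaf $G$ must agree with its saturation, so $G$ is a line subbundle of $E$; and because $E$ is a subbundle of $W$, the intersection $G = E \cap F$ is a line subbundle of $W$ of degree $-(g-1)$, as asserted. The argument is in essence careful bookkeeping with degrees, so I anticipate no deep obstacle; the one point that genuinely requires care is the systematic passage from a rank-one subsheaf to its saturation in order to apply the Segre bound legitimately, and, in (2), to conclude that $G$ is an honest subbundle rather than merely a subsheaf. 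The small clean input that makes everything fit is the identity $s_1(E^{*}) = s_1(E)$, which lets both pieces be governed by the single hypothesis $g-1 \le s_1(E)$ and produces the exact cancellation of $e$.
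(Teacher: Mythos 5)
Your proposal is correct and follows essentially the same route as the paper: split $F$ by the kernel and image of the composite $F \hookrightarrow W \twoheadrightarrow E^{*}$, bound each rank-one piece by the Segre invariant using $s_1(E) = s_1(E^{*}) \ge g-1$, and add so that the $e$-dependence cancels. Your equality-case analysis in (2) (saturating $G$ and noting it already attains the Segre bound) is a minor variant of the paper's argument, which instead checks that $M = \mathrm{Im}(F \to E^{*})$ is saturated by excluding torsion in $E^{*}/M$.
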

\begin{proof}
(1) Let $L$ be the line subbundle of $W$ associated to the intersection of $E$ and $F$. By the assumptions, we have a diagram
\begin{equation*}
\begin{CD}
0 @>>> E @>>> W @>>> E^{*} @>>> 0 \\
@. @AAA  @AAA  @AAA  @.\\
0 @>>> L @>>> F @>>> M @>>> 0
\end{CD}
\end{equation*}
where $M$ is a subsheaf of $E^*$. Since $E$ is general,
\[ \deg(L) \leq \frac{1}{2} ( -e - g + 1 ) \quad \hbox{and} \quad \deg(M) \leq \frac{1}{2} ( e - g + 1 ). \]
Hence $\deg(F) = \deg(L) + \deg(M) \le -(g-1)$.\\

(2) By the above inequalities, the condition $\deg E = \deg F =
-(g-1)$ implies that $\deg L = -(g-1)$ and $\deg M = 0$. If the quotient
sheaf $E^* /M$ had nonzero torsion, then $E^*$ would admit a quotient
line bundle of degree $<g-1$, contradicting the generality of
$E$. Hence $M$ must be a line subbundle of $E^*$, which shows that
the intersection of $E$ and $F$ coincides with $L$.
\end{proof}

The following is an immediate consequence of Lemma \ref{transverse} (1).

\begin{corollary}  \label{gensurg}
Let $E$ be a rank two bundle of degree $-e$ with $g-1 \le s_1(E) \le
g$. Consider a symplectic extension $0 \to E \to W \to E^{*} \to 0$
corresponding to a general point $\delta(W)$ of $\pp H^1(X, \Sym^2
E)$.
\begin{enumerate}
\item If $1 \le e \le g-2$, then any Lagrangian subbundle of $W$ other than $E$ itself, of
degree $\geq -e$, comes from an elementary transformation of $E^*$.
\item If $e = g-1$, then any Lagrangian subbundle of $W$ of degree $>
-e$ comes from an elementary transformation of $E^*$. \qed
\end{enumerate}
\end{corollary}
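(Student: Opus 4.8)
The plan is to deduce the statement directly from Lemma \ref{transverse}(1) by analyzing how a Lagrangian subbundle $F$ of $W$ meets the fixed Lagrangian $E$. First I would unwind the conclusion: a rank two subbundle $F \subset W$ comes from an elementary transformation of $E^*$ precisely when the composite $F \hookrightarrow W \twoheadrightarrow E^*$ is generically an isomorphism, that is, when the subsheaf $F \cap E$ of $W$ has rank zero. Indeed, in that case $F \to E^*$ is a sheaf injection with torsion cokernel, so $F$ is a lift of the elementary transformation $\mathrm{im}(F \to E^*)$ of $E^*$; conversely, the failure of injectivity is exactly the occurrence of a diagram of the form (\ref{exceptional}) with $\rank(G) \ge 1$. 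Thus it suffices to control the rank of $F \cap E$.

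Since $F$ and $E$ both have rank two, $\rank(F \cap E) \in \{0,1,2\}$, and I would rule out the two larger values. If $\rank(F \cap E) = 2$, then $F$ and $E$ coincide on a dense open subset of $X$; being saturated rank two subbundles of $W$ that agree generically, they must be equal, $F = E$. This is excluded by hypothesis in part (1), where $F \neq E$; and in part (2) it would force $\deg F = \deg E = -e$, against $\deg F > -e$. Hence in either case $\rank(F \cap E) \le 1$.

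The decisive step is eliminating $\rank(F \cap E) = 1$, and here Lemma \ref{transverse}(1) does all the work. If $F \cap E$ were generically of rank one, then, since $E$ is general with $g-1 \le s_1(E) \le g$ (exactly the hypothesis of that lemma), we would have $\deg F \le -(g-1)$. But the degree hypotheses rule this out: in part (1), $e \le g-2$ gives $\deg F \ge -e \ge -(g-2) > -(g-1)$, while in part (2), $\deg F > -e = -(g-1)$; in both cases $\deg F > -(g-1)$, contradicting $\deg F \le -(g-1)$. Therefore $\rank(F \cap E) = 0$, and by the first paragraph $F$ comes from an elementary transformation of $E^*$.

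I do not anticipate a genuine obstacle: the entire content is already contained in Lemma \ref{transverse}(1), and the corollary is merely the remark that the numerical constraints $1 \le e \le g-2$ (respectively $e = g-1$) push $\deg F$ strictly above the threshold $-(g-1)$ produced by that lemma. The only point demanding a little care is the bookkeeping at the boundary value $e = g-1$, where one must invoke the strict inequality $\deg F > -e$ rather than $\ge$; this is precisely why part (2) concerns Lagrangian subbundles of degree strictly greater than $-e$.
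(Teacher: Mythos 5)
Your proof is correct and follows exactly the route the paper intends: the paper presents this corollary as an immediate consequence of Lemma \ref{transverse}(1), and your case analysis on the generic rank of $F \cap E$ (rank two forces $F=E$, rank one forces $\deg F \le -(g-1)$ which the degree hypotheses exclude, rank zero gives the lifted elementary transformation) is precisely the intended argument, spelled out.
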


From this, we get a nice geometric criterion on lifting which
improves Theorem \ref{seginvsec} in the case of rank 4. Recall the
embedding criteria on $\psi$ of Lemma \ref{psiemb} (3), confirming
that for $g \geq 3$ and for a general bundle $E \in \cU(2, -e)$ with $e>0$,
the map $\psi \colon \pp E \to \pp \hoe$ is an
embedding. Hence in this case, we get a surface $\psi(\pp E) \cong \pp E$ inside $\pp H^1(X, \Sym^2 E)$.

\begin{theorem} \label{segsec}
Assume $g \ge 3$ and $1 \le e \le g-1$. Let $E$ be a general bundle in
$\cU(2,-e)$, and consider a nontrivial symplectic extension $W$
of $E^*$ by $E$. For each $k$ with $1 \le k \leq 2e-1$, the
following conditions are equivalent:
\begin{enumerate}
\item $W$ admits an isotropic lifting of an elementary
transformation $F$ of $E^*$ with $\deg F \ge e-k$.
\item $\delta(W) \in \Sec^{k}\pp E$ in $\pp
H^1(X, \Sym^2 E)$.
\item $\sLag(W) \leq (n+1)(k-e)$.
\end{enumerate}
\end{theorem}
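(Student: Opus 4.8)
The plan is to prove the three equivalences by establishing the cycle of implications $(1) \Rightarrow (2) \Rightarrow (3) \Rightarrow (1)$, so that all conditions become equivalent at once. Throughout we are in rank $2n=4$, so $n+1 = 3$, and $\deg E = -e$, $\deg E^{*} = e$; recall also that for $g \ge 3$ and general $E \in \cU(2,-e)$ the map $\psi$ is an embedding by Lemma \ref{psiemb}(3), so that $\Sec^{k}\pp E \subset \pp H^{1}(X,\Sym^{2}E)$ makes sense.

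The first two implications are essentially packaged in the earlier results. For $(1) \Rightarrow (2)$: if $W$ admits an isotropic lifting of an elementary transformation $F$ of $E^{*}$ with $\deg F \ge e-k$, then $\deg(E^{*}/F) = e - \deg F \le k$, so Lemma \ref{genlift}(1) places $\delta(W)$ in $\Sec^{k}\pp E$. For $(2) \Rightarrow (3)$: this is exactly Theorem \ref{seginvsec} applied with $\deg E = -e$, giving $\sLag(W) \le (n+1)(k+\deg E) = 3(k-e)$.

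The substance of the theorem is the converse $(3) \Rightarrow (1)$, which upgrades Theorem \ref{seginvsec} to an equivalence in rank four. Assume $\sLag(W) \le 3(k-e)$. By the definition of $\sLag$ there is a Lagrangian subbundle $F \subset W$ with $-3\deg F \le 3(k-e)$, that is $\deg F \ge e-k$; since $k \le 2e-1$, this forces $\deg F \ge 1-e$. I would then analyze the generic rank of the intersection $F \cap E$ inside $W$. Because $\deg F \ge 1-e > -e = \deg E$, we have $F \neq E$, so $F \cap E$ has rank $0$ or $1$ (rank $2$ would force $F = E$, since both are saturated of rank two). If the rank were $1$, then $F$ and $E$ meet in a line subbundle and Lemma \ref{transverse}(1) yields $\deg F \le -(g-1)$; combined with $\deg F \ge 1-e$ this gives $e \ge g$, contradicting $e \le g-1$. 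Hence $F \cap E = 0$ as a sheaf, so the composite $F \to W \to E^{*}$ is injective with torsion cokernel, exhibiting $F$ as the isotropic lift of an elementary transformation of $E^{*}$ of degree $\ge e-k$. This is precisely condition $(1)$, closing the cycle.

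The main obstacle is exactly this last implication: ruling out the exceptional diagram (\ref{exceptional}) in which a maximal Lagrangian subbundle of $W$ meets $E$ in a nonzero subsheaf. The entire purpose of Lemma \ref{transverse} (for which $g \ge 3$ and the generality of $E$, hence $g-1 \le s_1(E) \le g$, are used) is to bound the degree of any such non-transverse $F$ by $-(g-1)$, and the hypothesis $k \le 2e-1$ is what keeps the degree of the competing Lagrangian subbundle strictly above $-(g-1)$, thereby forcing the transverse (rank-zero) intersection. Without this upper bound on $k$ the argument collapses, since $\deg F$ could drop to $-e$ and $F$ (or $E$ itself) need no longer arise from an elementary transformation of $E^{*}$. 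One could alternatively cite Corollary \ref{gensurg} in place of the direct appeal to Lemma \ref{transverse}; invoking Lemma \ref{transverse}(1) directly has the advantage of applying to every nontrivial extension $W$, matching the generality in which the theorem is stated.
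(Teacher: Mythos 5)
Your proof is correct and follows essentially the same route as the paper's: (1) $\Rightarrow$ (2) via Lemma \ref{genlift}(1), (2) $\Rightarrow$ (3) via Theorem \ref{seginvsec}, and (3) $\Rightarrow$ (1) by extracting a Lagrangian subbundle $F$ of degree $\ge e-k \ge 1-e > -e$ and ruling out a generically rank-one intersection of $F$ with $E$. The only cosmetic difference is that you invoke Lemma \ref{transverse}(1) directly where the paper cites Corollary \ref{gensurg}; this is the same argument unwound, and, as you note, it matches the theorem's stated generality without relying on the corollary's (superfluous) hypothesis that $\delta(W)$ be a general point.
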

\begin{proof} The implications (1) $\Rightarrow$ (2) and (2) $\Rightarrow$ (3) were already
shown in Lemma \ref{genlift} (1) and Theorem \ref{seginvsec}
respectively. The implication (3) $\Rightarrow$ (1) can be readily
seen as follows.

The condition $\sLag(W) \le (n+1)(k-e)$ implies that $W$ admits a
Lagrangian subbundle $F$ of degree at least $e-k$. By Corollary
\ref{gensurg}, if $\deg E = -e < e-k \le \deg F$, then $F$ comes from an elementary
transformation of $E^*$.
\end{proof}

\subsection{Stratification on $\cM_4$}

For any positive integer $e$, let $\cU(2, -e)^s$ be the moduli space
of stable bundles over $X$ of rank 2 and degree $-e$. According to
Narasimhan and Ramanan \cite[Proposition 2.4]{NR1975}, there exist a
finite \'etale cover $\pi_{e} \colon \tU_e \to \cU(2, -e)^s$ and a
bundle $\cE_e \to  \tU_e \times X$ with the property that $\cE_e|_{\{ E \} \times X}
\cong \pi_e(E)$ for all $E \in \tU_e$ (for
odd $e$, we can take $\pi_e$ to be the identity map since
$\cU(2,-e)$ is a fine moduli space).

Now by Riemann-Roch and semistability, for each $E \in \cU(2,
-e)^s$, we have
\[
\dim H^1(X, \Sym^{2}E) = 3e +3(g-1).
\]
Therefore, the sheaf $R^{1} {p}_{*} \Sym^2 (\cE_e)$ is locally free
of rank $3(e + g - 1)$ on $\tU_e$. Consider its projectivization
$\mu \colon \pp_{e} \to \tU_e$.

Now we have a diagram
\begin{displaymath}
\label{universal} \xymatrix{
\pp_{e}  \ar[dd]_{\mu} & \pp_{e} \times X \ar[d]^{\mu \times \Iden_X} \ar[l]_{r} & \\
& \tU_{e} \times X \ar[dl]_{p} \ar[dr]^{q} & \\
\tU_{e} & & X}
\end{displaymath}
We write $r : \pp_e \times X \to \pp_e$ for the projection. By Lange
\cite[Corollary 4.5]{Lan}, there is an exact sequence of vector
bundles
\[ 0 \to (\mu \times \Iden_{X})^{*}\cE_e \otimes r^{*}O_{\pp_e}(1)
\to \cW_{e} \to (\mu \times \Iden_{X})^{*}\cE_e^{*} \to 0 \] over
$\pp_{e} \times X$, with the property that for $\delta \in \pp_e$ with
$\mu(\delta) = E$, the restriction of $\cW_{e}$ to $\{ \delta \} \times X$ is isomorphic to the
symplectic extension of $E^*$ by $E$ defined by $\delta \in \pp H^1(X,
\Sym^{2}E)$.

There arises a basic question regarding these extension spaces: Is a
bundle $W$ corresponding to a general point $\delta \in \pp H^1(X,
\Sym^{2}E)$ stable, if $E$ is taken to be general? (The same
question for the vector bundles was answered affirmatively by
Brambila-Paz and Lange \cite{BPL}, and Russo and Teixidor i Bigas
\cite{RT}.) The machinery obtained in the previous subsection
enables us to answer this question.

\begin{lemma} \label{semistability}
Let $E \in \cU(2,-e)$, $1 \le e \le g-1$, be a general bundle such
that $g-1 \le s_{1}(E) \le g$. Then a general point of $\pp H^1(X,
\Sym^2 E)$ corresponds to a stable symplectic bundle.
\end{lemma}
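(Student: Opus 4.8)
The plan is to prove stability directly by excluding destabilizing isotropic subbundles. For a principal $\Spn$-bundle, stability is tested on reductions to maximal parabolic subgroups, which for $\Spn$ are exactly the stabilizers of isotropic subspaces of dimensions $1,\dots,n$; this is the Ramanathan criterion recalled in the introduction. In rank four ($n=2$) it says that $W$ is a stable symplectic bundle precisely when every line subbundle (automatically isotropic, since the form is alternating) and every Lagrangian subbundle has strictly negative degree. Hence it suffices to show that for a general $\delta(W)\in\pp H^1(X,\Sym^2 E)$ the bundle $W$ carries no isotropic subbundle of degree $\ge 0$, and I would handle the rank-two (Lagrangian) and rank-one (line subbundle) cases in turn.

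For the Lagrangian case, suppose $W$ had a Lagrangian subbundle $F$ with $\deg F\ge 0$. Since $\deg F\ge 0>-e=\deg E$, Corollary \ref{gensurg} guarantees that $F$ comes from an elementary transformation of $E^*$, and then Theorem \ref{segsec} (with $k=e$, which lies in the allowed range $1\le k\le 2e-1$ because $e\ge 1$) forces $\delta(W)\in\Sec^{e}\pp E$. It then remains only to observe that $\Sec^{e}\pp E$ is a proper subvariety of $\pp H^1(X,\Sym^2 E)$: as $\pp E$ is a surface we have $\dim\Sec^{e}\pp E\le 3e-1$, while $\dim\pp H^1(X,\Sym^2 E)=3e+3(g-1)-1$, and $g\ge 3$ makes the former strictly smaller. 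Thus a general $\delta(W)$ avoids $\Sec^{e}\pp E$ and $W$ has no Lagrangian subbundle of nonnegative degree. Note that this uses only the crude upper bound on secant dimension, not the absence of secant defect from Proposition \ref{defect}.

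For the line-subbundle case, take a line subbundle $L\subset W$ with $\deg L\ge 0$. If $L\subseteq E$, the generality of $E$ (so $s_1(E)\ge g-1$) forces $\deg L\le\tfrac12(-e-g+1)<0$, a contradiction; otherwise $L$ maps to a nonzero line subsheaf of $E^*$ whose saturation $M$ satisfies $0\le\deg L\le\deg M\le\tfrac12(e-g+1)\le 0$. So for $1\le e\le g-2$ no such $L$ exists, and the only surviving possibility is $e=g-1$, which forces $\deg L=\deg M=0$ with $L\cong M$ a degree-zero line subbundle of $E^*$ that lifts to $W$. I expect this edge case to be the main obstacle. To dispose of it I would use that a general $E$ with $s_1(E)=g-1$ admits only finitely many degree-zero line subbundles $M_1,\dots,M_t$ of $E^*$ (the relevant Quot scheme has expected dimension zero). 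A fixed $M_i$ lifts to $W$ exactly when the image of $\delta(W)$ under the restriction map $H^1(X,\Sym^2 E)\to H^1(X,\Homom(M_i,E))=H^1(X,M_i^*\otimes E)$ vanishes. I would then check that the underlying bundle map $\Sym^2 E\to M_i^*\otimes E$, given by restricting a homomorphism $E^*\to E$ to the line $M_i$, is fibrewise surjective with line-bundle kernel, so that the induced map on $H^1$ is surjective, and that $h^1(X,M_i^*\otimes E)>0$ by Riemann--Roch (its degree being $-(g-1)$). Consequently each lifting locus is a proper linear subvariety, and the finite union over $i$ is again proper, so a general $\delta(W)$ avoids all of them. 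Combining the two cases, a general $\delta(W)$ yields a $W$ all of whose isotropic subbundles have negative degree, hence a stable symplectic bundle.
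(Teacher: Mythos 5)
Your proof is correct, and for $1 \le e \le g-2$ it coincides with the paper's argument: line subbundles are excluded via $s_1(E)=s_1(E^*)\ge g-1$, and a Lagrangian subbundle of nonnegative degree would, by Corollary \ref{gensurg} together with Lemma \ref{genlift}/Theorem \ref{segsec}, place $\delta(W)$ on $\Sec^{e}\pp E$, which has dimension at most $3e-1$ and so is proper in $\pp H^1(X,\Sym^2 E)$. You differ from the paper in two places. First, you invoke Ramanathan's criterion to test stability only on isotropic subbundles (ranks one and two), whereas the paper also disposes of rank-three subbundles by self-duality; your reading is the cleaner one, since neither argument excludes non-isotropic rank-two subbundles of degree zero (compare the $F_1\oplus F_2$ discussion in \S\ref{difference}), so the statement must indeed be understood as stability in the principal-bundle sense, which is exactly what your reduction addresses. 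Second, and more substantively, you handle the boundary case $e=g-1$ directly, where the paper simply cites Theorem \ref{thm1} (a general $W\in\cM_4$ attains $\sLag=3(g-1)$, hence a general point of a general such extension space is stable). You instead isolate the one surviving degenerate possibility --- a degree-zero line subbundle of $E^*$ lifting to $W$ --- and kill it by observing that $E^*$ has only finitely many maximal (degree-zero) line subbundles $M_i$ (the same Lange--Narasimhan fact the paper uses in \S 5.2), and that the locus where $M_i$ lifts is the projectivization of the kernel of $H^1(X,\Sym^2 E)\to H^1(X,\Homom(M_i,E))$; your fibrewise surjectivity claim is right (the kernel of $\Sym^2 E\to \Homom(M_i,E)$ is the line of symmetric maps factoring through $E^*/M_i$), the target is nonzero by Riemann--Roch, so each such locus is a proper linear subspace. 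This gives a self-contained alternative for $e=g-1$ that does not lean on the dominance argument behind Theorem \ref{thm1}; the paper's route is shorter but imports the whole of \S\ref{Hirschowitz}.
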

\begin{proof}
By Theorem \ref{thm1}, a general symplectic bundle $W$ in $\cM_4$
satisfies $\sLag(W) = 3(g-1)$. This shows the above statement for
$e=g-1$.

Now assume $1 \le e<g-1$. Let $W$ be a symplectic bundle corresponding to a general point of
$\pp H^1(X, \Sym^2 E)$. Consider any line subbundle $L$ of $W$. If $L$ is contained in the subbundle $E$, then $\deg(L) < 0$ by the stability of $E$. Otherwise $L$ would yield an invertible
subsheaf of $E^*$ via the composition $L \to W \to E^*$. From the
condition on $s_1(E) = s_1(E^*)$, we have
\[ \deg L \le \frac{e - g + 1 }{2} < 0 \]
by generality of $E$. Therefore, any line subbundle
of $W$ has negative degree. Since symplectic bundles are self-dual,
the same holds for subbundles of rank three.

Finally we consider the Lagrangian subbundles of $W$. By Theorem
\ref{segsec}, $\sLag(W) \le 0$ if and only if $\delta(W) \in \Sec^e
\pp E$. But
\[ \dim \Sec^e \pp E \ \le \ 2e + (e-1) = 3e - 1, \]
while $\dim \pp H^1(X, \Sym^{2}E) = 3e + 3g-4$. Therefore, the
bundles with $\sLag(W) \le 0$ form a proper subset in $\pp H^1(X,
\Sym^{2}E)$.
\end{proof}

Now we go back to the diagram (\ref{universal}). There is a classifying map
$\gamma_{e} \colon \pp_{e} \dashrightarrow \cM_4$
induced by the bundle $\cW_e \to \pp_e \times X$. This map
$\gamma_e$ is defined over a nonempty dense subset of $\pp_e$.

Recall the
definition of the stratification on $\cM_4$ given by the invariant
$\sLag$: for each $e >0$, consider the subvarieties of $\cM_4$
defined by
\[
\cM_{4}^{e} := \{ W \in \cM_{4} \ : \ \sLag(W) \le 3e \}.
\]
Now we can show the following result which implies Theorem
\ref{mainthm}.
\begin{theorem} \label{main}
\begin{enumerate}
\item For each $e$ with $1 \le e \le g-1$, the map $\gamma_e$ is generically
finite and its image is dense in $\cM_4^{e}$.
\item For each $e$ with $1 \le e \le g-1$, the locus $\cM_4^{e}$ is
irreducible of dimension $7(g-1)+3e$.
\item For $e \le g-2$, a general point of $\cM_4^{e}$ corresponds
to a symplectic bundle which has a unique maximal Lagrangian
subbundle. In particular for odd $e \le g-2$, the locus $\cM_4^{e}$ is
birationally equivalent to the fibration $\pp_e$ over $\cU(2,-e)^s$.
\end{enumerate}
\end{theorem}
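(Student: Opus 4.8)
The plan is to identify $\cM_4^e$ with the closure of the image of the classifying map $\gamma_e \colon \pp_e \dashrightarrow \cM_4$ and to deduce all three parts from this. Since $\pp_e$ is a projective bundle over the irreducible variety $\tU_e$, it is irreducible, and
\[ \dim \pp_e \ = \ \dim \cU(2,-e) + (3e+3g-4) \ = \ (4g-3)+(3e+3g-4) \ = \ 7(g-1)+3e. \]
Every fibre of $\cW_e$ carries a Lagrangian subbundle of degree $-e$, so $\gamma_e(\pp_e) \subseteq \cM_4^e$, and by Lemma \ref{semistability} the map is defined on a dense open set. The first point I would settle is that for general $E \in \cU(2,-e)$ and general $\delta(W)$ one has $\sLag(W)=3e$ exactly: by Theorem \ref{segsec} the presence of a Lagrangian subbundle of degree $>-e$ is equivalent to $\delta(W)\in\Sec^{2e-1}\pp E$, and since $\dim\Sec^{2e-1}\pp E\le 3(2e-1)-1=6e-4<3e+3g-4$ whenever $e<g$, a general $\delta(W)$ avoids this locus. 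Hence the generic point of $\gamma_e(\pp_e)$ lies in the top stratum $\cM_4^e\setminus\cM_4^{e-1}$.

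I would prove Part (3) and the generic finiteness in Part (1) next. For $1\le e\le g-2$, suppose a general $W$ in the image admitted a second Lagrangian subbundle $F\ne E$ of degree $-e$. As $\sLag(W)=3e$, this $F$ is again maximal, so by Corollary \ref{gensurg}(1) it comes from an elementary transformation of $E^*$ with $\deg(E^*/F)=2e$, whence Lemma \ref{genlift}(1) forces $\delta(W)\in\Sec^{2e}\pp E$. But $\dim\Sec^{2e}\pp E\le 6e-1<3e+3g-4$ exactly when $e\le g-2$, so a general $\delta(W)$ avoids $\Sec^{2e}\pp E$ and $E$ is the unique maximal Lagrangian subbundle, giving Part (3). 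In particular the fibre of $\gamma_e$ over a general point of its image reduces to a single $(E,\delta)$ up to the covering $\pi_e$, so $\gamma_e$ is generically finite; for odd $e$ one has $\pi_e=\Iden$, the map is generically injective, and $\cM_4^e$ is birational to $\pp_e$. The case $e=g-1$ of generic finiteness is purely dimensional: there $\cM_4^{g-1}=\cM_4$ has dimension $10(g-1)=\dim\pp_{g-1}$, and $\gamma_{g-1}$ is dominant because a general $W\in\cM_4$ has $\sLag(W)=3(g-1)$ with a Lagrangian subbundle that is general, hence stable, as a vector bundle (Lemma \ref{structure}); a dominant map of irreducible varieties of equal dimension is generically finite.

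It remains to prove the density asserted in Part (1), after which Part (2) is formal: $\gamma_e$ being generically finite, $\overline{\gamma_e(\pp_e)}$ is irreducible of dimension $7(g-1)+3e$, and once it is shown to equal $\cM_4^e$ the irreducibility and dimension follow. I would argue by induction on $e$, the inclusion $\overline{\gamma_e(\pp_e)}\subseteq\cM_4^e$ being clear. For the reverse I would first record the bound $\dim\cM_4^e\le 7(g-1)+3e$, by counting parameters for a pair consisting of a bundle and a maximal Lagrangian subbundle of degree $-d$, $0\le d\le e$ (the sublocus with an unstable Lagrangian being of strictly smaller dimension). Thus $\overline{\gamma_e(\pp_e)}$ is a top-dimensional component; since a general member of the top stratum $\{\sLag=3e\}$ has a stable maximal Lagrangian of degree $-e$ and therefore lies in $\gamma_e(\pp_e)$, the whole top stratum lies in the closure. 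The decisive point is then to show that the lower stratum $\cM_4^{e-1}$ also lies in $\overline{\gamma_e(\pp_e)}$, ruling out any component of $\cM_4^e$ concealed inside $\cM_4^{e-1}$.

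This last step is where I expect the real difficulty, because a general $W\in\cM_4^{e-1}$ possesses no Lagrangian subbundle of degree $-e$ whatsoever: the expected dimension $\chi(\Sym^2E^*)=3(e-g+1)$ of that linear system is negative for $e<g-1$, so such a $W$ does not literally belong to the image and must be produced as a limit. I would construct the degeneration explicitly. Writing $E'$ for the maximal Lagrangian subbundle of $W$, of degree $-(e-1)$, with class $\delta'=\delta(W)\in H^1(X,\Sym^2E')$, fix a point $x$ and a general elementary transformation $E\subset E'$ with $E'/E\cong\cc_x$, chosen so that $E$ is stable; this induces a surjection $H^1(X,\Sym^2E)\to H^1(X,\Sym^2E')$ with three-dimensional kernel. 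Taking a line of classes lifting $\delta'$ and the associated family of symplectic extensions of $E^*$ by $E$ over $\pp^1\times X$, each general member carries $E$ as a Lagrangian subbundle of degree $-e$ and so lies in $\gamma_e(\pp_e)$, while the distinguished member returns $W$ as a point of $\cM_4$. This places $\cM_4^{e-1}$ inside $\overline{\gamma_e(\pp_e)}$ and, with the top stratum, yields $\cM_4^e=\overline{\gamma_e(\pp_e)}$. The technical heart, and the step demanding the most care, is to verify that this elementary-transformation family is flat with the stated special fibre and remains within $\cM_4$; this is the symplectic counterpart of the closure arguments of Brambila-Paz--Lange and Russo--Teixidor in the vector bundle setting.
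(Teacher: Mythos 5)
You reproduce the paper's argument faithfully for the dimension count of $\pp_e$, for the case $e=g-1$ (dominance plus equality of dimensions), for part (3), and for generic finiteness via Corollary \ref{gensurg} and the estimate on $\dim\Sec^{2e}\pp E$. The divergence is entirely in the proof that the image of $\gamma_e$ is dense in $\cM_4^e$, and there your argument has a genuine gap. The paper's route is to observe that \emph{every} $W\in\cM_4^e$ sits in a symplectic extension $0\to E\to W\to E^*\to 0$ with $\deg E$ equal to $-e$ exactly, at the price of allowing $E$ to be unstable as a vector bundle, and then to deform $E$ to a stable bundle inside an irreducible family. Your assertion that a general $W\in\cM_4^{e-1}$ ``possesses no Lagrangian subbundle of degree $-e$ whatsoever'' is what forces you onto a different road, and it is not justified: the negative expected dimension $\chi(\Sym^2E^*)<0$ only excludes such subbundles generically in the relevant parameter space. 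Concretely, for a line subbundle $L$ of $W$ one has $L\subset L^\perp$, and the preimage in $L^\perp$ of any line subbundle $M$ of the rank-two degree-zero bundle $L^\perp/L$ is a Lagrangian subbundle of $W$ of degree $\deg L+\deg M$; these produce Lagrangian subbundles of low degree in abundance, unstable as rank-two bundles, which is exactly the paper's caveat ``which might be unstable''.

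The step that actually fails is your degeneration. If $E\subset E'$ is an elementary transformation with $E'/E\cong\cc_x$, then $E$ is \emph{not} saturated in $W$: the quotient $W/E$ contains the torsion subsheaf $E'/E$, so $W$ is not an extension of $E^*$ by $E$ for any class in $H^1(X,\Sym^2E)$. On the other hand, every bundle $V_\delta$ in your proposed family \emph{is} such an extension, with $E$ saturated and Lagrangian of degree $-e$ by construction. Hence no member of the family is isomorphic to $W$, and the ``distinguished member'' does not return $W$. What the condition that $\delta$ lifts $\delta'$ really says is that $V_\delta$ and $W$ are two different colength-one modifications at $x$ of a common pushout bundle, not that $W$ is a specialization of the $V_\delta$; moreover, since every point of your line of lifts maps to the same $\delta'$, no limit is being taken at all. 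To make a genuine limit argument work one would have to degenerate the pair (bundle, isotropic subsheaf) so that the subsheaf becomes non-saturated in the special fibre --- for instance via properness of an isotropic Quot scheme --- which is a substantially heavier tool than the paper's reduction to unstable kernels of degree exactly $-e$.
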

\begin{proof} \ First consider the case $e= g-1$. From the bound on $\sLag$
given in Theorem \ref{thm1}, we have $\cM_4 = \cM_4^{g-1}$. In
particular, $\cM_4^{g-1}$ is irreducible of dimension $10(g-1)$.
Moreover, the fact that a general symplectic bundle $W$ satisfies
$\sLag(W) = 3(g-1)$ implies that $\gamma_{g-1}: \pp_{g-1}
\dashrightarrow \cM_4$ is dominant. But $\dim \pp_{g-1} =
10(g-1)$, so $\gamma_{g-1}$ must be generically finite.

In general, it is clear that the image of $\gamma_e$ lands on the
stratum $\cM_4^{e}$. Since the source $\pp_e$ is irreducible, so is the
image of $\gamma_e$. Now we show that it is dense in
$\cM_4^e$. Any $W$ in $\cM_4^{e}$ is fitted into a symplectic extension of
$E^*$ by $E$ for some $E$ of degree $-e$ which might be
unstable. But every such $E$ is contained in an irreducible family
of bundles whose general member is a stable bundle in
$\cU(2, -e)$. This shows that $\cM_4^{e}$ is the closure
of the image of $\gamma_e$. In particular, $\cM_4^{e}$ is
irreducible for each $e$.

Now assume $1 \le e \le g-2$ and consider a point of
$\pp_e$, that is, a symplectic extension
\[ 0 \to E \to W \to E^* \to 0 \]
where $E \in \cU(2,-e)^s$ is general, so $s_1(E) \ge g-1$. Suppose
that $W$ admits a Lagrangian subbundle $F$ of degree $\ge -e$ other
than $E$. Then by Corollary \ref{gensurg} (1), $F$ is an elementary
transformation of $E^*$ which lifts to $W$. By Theorem \ref{segsec},
$\delta(W) \in \Sec^{2e} (\pp E)$. But
\[
\dim \Sec^{2e}(\pp E) \le 6e-1 \ < \ 3e+3g-4 = \dim \pp H^1(X, \Sym^2
E).
\]
Thus an extension represented by a general point of $\pp_{e}|_E$ contains no Lagrangian subbundle of degree $\ge
-e$ apart from $E$ itself. This implies that a general symplectic
bundle in $\cM_4^{e}$ has a unique maximal Lagrangian subbundle. In
other words, $W$ is represented only in the fibers of $\pp_e \to
\tU_e$ over the finite subset $\pi_e^{-1}(E)$ for the \'{e}tale
cover $\pi_e : \tU_e \to \cU(2, -e)^s$. Moreover, there is only one
extension class in $\pp H^1(X, \Sym^2 E)$ whose associated bundle is
isomorphic to $W$.

This shows that $\gamma_e$ is generically finite onto its image, of
degree equal to that of $\pi_e$. In particular, for odd $e \le
g-2$, $\gamma_e$ is generically injective since $\pi_e$ is the identity map, and so $\cM_4^{e}$ is
birationally equivalent to the fibration $\pp_e$ over $\cU(2,-e)^s$.
Also, for all $e$ we obtain $\dim \cM_4^{e} \ = \ \dim \pp_e \ = \ 7(g-1)+3e$.
\end{proof}


\subsection{Nonisotropic maximal subbundles} \label{difference}

By Serman \cite{Ser}, for $n>1$ the forgetful map taking a symplectic bundle to the equivalence class of the underlying vector bundle
gives an embedding of $\cM_{2n}$ in the moduli space $\mathcal{SU}(2n, \cO_X)$ of
semistable bundles of rank $2n$ with trivial determinant. Thus it is interesting to compare the two stratifications
on $\cM_{2n}$ given by $s_n$ and $\sLag$. We will focus on the rank four case.

Suppose $X$ has genus $g \geq 4$. Let $F_1$ and $F_2$ be a pair of
mutually nonisomorphic stable bundles of rank two and trivial
determinant. The direct sum $W := F_{1} \oplus F_2$ is a symplectic
bundle with $s_{2}(W) = 0$ and $\sLag(W) > 0$. We will now use Lemma
\ref{genlift} and Remark \ref{ordinarylifting} to illuminate this phenomenon.
\par
Let $E$ be a general bundle of rank two and determinant $\cO_{X}(-x)$
for some point $x \in X$. The constant function $1$ defines a global
rational section $\alpha$ of $\det(E)$ which has a simple pole at
$x$ and is elsewhere regular. Since $E$ has rank two, for \emph{any}
linearly independent $v, w \in E|_x$ we have, up to a constant,
\begin{equation}
\overline{\alpha} = \frac{v \wedge w}{z} = \frac{v \otimes w}{z} -
\frac{w \otimes v}{z}. \label{nonsymmp}
\end{equation}
Thus the cohomology class
\[ \left[ \frac{v \otimes w}{z} \right] = \left[ \frac{w \otimes v}{z} \right] \]
in $H^{1}(X, E \otimes E)$ defines a symplectic extension of $E^*$ by $E$.
\par
Write $p = \frac{v \otimes w}{z}$. By Lemma \ref{psiemb} (2), the map $\varphi \colon
\Delta \to \pp H^{1}(X, E \otimes E)$ is base point free. Since
$[p] = \varphi (v \otimes w)$, it is
a nontrivial cohomology class.
\par
By Lemma \ref{cohomlift}, the kernel $F$ of $p \colon E^{*} \to
\sPrin(E)$ lifts to a \emph{nonisotropic} subbundle of $W$. This
subbundle is an elementary transformation $0 \to F \to E^{*} \to
\cc_{x} \to 0$, so has trivial determinant (as it must, since
$h^{0}(X, \wedge^{2}F^{*}) > 0$).
\par
This behaviour can be explained geometrically as follows. Since
\begin{equation}
\left[ \frac{v \otimes w}{z} \right] = \left[ \frac{w \otimes v}{z}
\right] 
\label{equalclasses}
\end{equation}
for \emph{any} linearly independent $v$ and $w$, the map $\varphi
\colon \Delta|_{x} \to \pp H^{1}(X, E \otimes E)$ is not an
embedding. In fact it is a double cover
\[ \Delta|_{x} \cong \pp^{1} \times \pp^{1} \to \pp \Sym^{2}E|_{x} \cong \pp^2 \]
ramified over the plane conic $\pp E|_x$. (In particular, it
factorizes via $\pp H^{1}(X, \Sym^{2}E)$.) By (\ref{equalclasses}),
any extension class lying in this $\pp^2$ lies on (a 1-secant to)
the quadric bundle $\Delta$, but a general such class does not lie
on (a 1-secant to) the conic bundle $\pp E$. Therefore, by Lemma
\ref{genlift}, an elementary transformation of degree $\deg(E^{*}) -
1 = 0$ lifts to a nonisotropic subbundle of $W$, but $W$ has no
Lagrangian subbundle of degree zero.
\begin{remark}
In fact $W$ admits a pair of nonisotropic subbundles of trivial
determinant given by $\Ker( p )$ and $\Ker (^{t}p)$. It
is not hard to see that these are of the form $F$ and $F^{\perp}$.
Furthermore, the bundle $W$ splits as
the direct sum $F \oplus F^{\perp}$, by \cite[Theorem 2.3]{Hit1}.
\end{remark}

Building upon this idea, one can show the following:\\
\\
\textit{Suppose $g \geq 4$. For any $e$ and $f$ with $0 \leq f < e$
and $f + 2e - 1 \leq \frac{2(g-2)}{3}$, there exists a stable rank
four symplectic bundle $W$ with $s_{2}(W) = 4f$ and $\sLag(W) =
3e$. In other words, $W$ has a maximal subbundle of degree $-f$ which is nonisotropic, and a maximal \emph{Lagrangian} subbundle of degree $-e < -f$.}
\\
\\
The principle is as above: for certain $E$ of degree $-e$ and
special determinant, one can construct a class $\delta$ in $H^{1}(X,
\Sym^{2} E)$ lying on $\Sec^{e+f} \Delta \setminus \Sec^{2e-1}\pp
E$. By Lemma \ref{genlift}, the extension $0 \to E \to W \to E^{*}
\to 0$ defined by $\delta$ will have a nonisotropic subbundle of
rank two and degree $-f$, but no Lagrangian subbundle of degree
greater than $-e$. The details are tedious, so we omit the
calculation. We mention only that the condition $f + 2e - 1 \leq
\frac{2(g-2)}{3}$ comes from Hwang--Ramanan \cite[Proposition
3.2]{HR}, which guarantees the vanishing of $H^0(X, ad_{E}(D))$ for certain
effective divisors $D$.

\section{Proofs of two lemmas}

In this miscellaneous section, we provide proofs of two lemmas which
were used in the previous sections. Both statements may look rather obvious, but we could not find shorter proofs
than those given here.

\subsection{Irreducibility of spaces of symmetric principal parts}

Let $E$ be a vector bundle over $X$ of rank $n$. For a principal
part $p \in \Prin(E)$, we define the \textit{degree} of $p$ as the degree of the torsion sheaf $ \mathrm{Im} \left( p: E^* \to
\sPrin(E) \right)$. A principal part of degree
$d$ is called \textit{general} if it is supported at $d$ distinct
points of $X$. Clearly, the elementary transformation $\Ker
\left( p: E^* \to \sPrin(E) \right)$ of $E$ is general in the sense
of Definition \ref{genelm} if and only if $p$ is general.

In this subsection, we prove the following.
\begin{proposition}
Every symmetric principal part $p \in \Prin (E \otimes E)$ is a limit (in the analytic topology)
of a sequence of general symmetric principal parts of the same degree.
\end{proposition}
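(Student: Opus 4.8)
The plan is to prove this by a local-to-global degeneration argument, working one support point at a time and then combining. A symmetric principal part $p \in \Prin(E \otimes E)$ is a finite sum of local symmetric principal parts $p = \sum_{j} p_{x_j}$ supported at distinct points $x_1, \ldots, x_m$ of $X$; since the supports are already distinct, the issue is not separating the points but rather approximating each individual local piece $p_{x_j}$ of higher pole order by a sum of simple-pole symmetric principal parts at nearby distinct points. So the heart of the matter is purely local: working in a coordinate disc around a single point $x$, with $E$ trivialized so that $E \otimes E|_{\text{disc}} \cong \cc^{n \times n}$ and $\Sym^2 E$ corresponds to symmetric matrices, I would show that a symmetric matrix-valued principal part with a pole of order $d$ (counted with the torsion-degree convention) is an analytic limit of sums of $d$ simple-pole symmetric principal parts supported at $d$ distinct points of the disc.

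First I would reduce to the local model. A symmetric principal part of degree $d$ supported at $x$ is given by a symmetric-matrix-valued Laurent tail $\sum_{i=1}^{N} A_i z^{-i}$ (modulo holomorphic terms), where the $A_i$ are $n \times n$ symmetric matrices and the degree $d$ equals the length of the torsion module $\mathrm{Im}(p \colon E^* \to \sPrin(E))$ at $x$. The target statement is that this tail is a limit of configurations $\sum_{\ell=1}^{d} \frac{v_\ell \otimes v_\ell + (\text{lower-rank symmetric corrections allowed by the degree})}{z - t_\ell}$ with the $t_\ell \to 0$ distinct, keeping the total degree fixed at $d$. The natural mechanism is a partial-fraction / confluence argument: a pole of order $i$ at the origin arises as a confluent limit of $i$ simple poles, via the finite-difference identity $\frac{1}{z^i} = \lim \frac{1}{(i-1)!}\sum_\ell c_\ell(t) \frac{1}{z - t_\ell}$ as the nodes $t_\ell$ coalesce to $0$ with appropriately scaled coefficients $c_\ell(t)$. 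The key point requiring care is that the coalescing simple principal parts must stay symmetric and must have the correct torsion degree, so that the limit genuinely lies in the closure of the general symmetric principal parts of degree exactly $d$, not of some smaller degree.

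The main obstacle, which I expect to occupy most of the work, is controlling the symmetry and the degree simultaneously under confluence. A generic symmetric principal part of degree $d$ supported near $x$ corresponds to $d$ distinct points each carrying a rank-one symmetric simple pole $\frac{v_\ell \otimes v_\ell}{z - t_\ell}$; but an arbitrary high-order symmetric tail need not be a confluent limit of rank-one pieces alone, because the matrices $A_i$ can have rank up to $n$. I would handle this by first treating the generic stratum — where the image torsion sheaf is a direct sum of $d$ length-one modules and the approximation is a direct application of the confluence identity to each Jordan-type summand — and then arguing that this stratum is dense in the full space of symmetric principal parts of degree $d$. Concretely, I would parametrize symmetric principal parts of fixed degree $d$ by an irreducible variety (the fibre of a map from symmetric jets, cut out by the degree condition), show the generic-support locus is nonempty and Zariski-open, hence dense, and invoke that a point in the closure of a dense constructible set is an analytic limit of points of that set.

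To organize this cleanly I would state and prove an auxiliary local lemma asserting that the space of symmetric principal parts of degree $d$ supported in a disc is irreducible with the distinct-support configurations forming a dense open subset, then globalize by noting that the full space $\Prin(E \otimes E)^{\mathrm{sym}}_{\deg = d}$ decomposes over partitions of $d$ among the support points and that the open dense condition is preserved under taking products and under the degeneration that merges distinct global supports into the prescribed pattern. Granting the irreducibility established in this way (which is exactly the flavour of fact the paper defers to \S5.1), the statement follows immediately: since the distinct-point configurations are dense in an irreducible space containing $p$, the given $p$ is an analytic limit of general symmetric principal parts of the same degree. I would be careful throughout to keep every constructed approximant symmetric, so that it lies in $\pp \Sym^2 E$ rather than merely in $\pp(E \otimes E)$, since that symmetry is precisely what links this lemma to the isotropic-lifting criterion of Lemma \ref{genlift}.
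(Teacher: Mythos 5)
There is a genuine gap. Your confluence mechanism works fine for a rank-one piece $\frac{v \otimes v}{z^{k}}$, but you correctly identify -- and then never resolve -- the real difficulty: an arbitrary symmetric tail $\sum_{i} A_{i} z^{-i}$ has residue matrices $A_i$ of rank up to $n$ at every pole order, and it is not clear that such a tail is a confluent limit of rank-one simple poles \emph{of the same total degree}. The paper's proof supplies exactly the missing ingredient: a normal form asserting that after a suitable change of local frame $e_1, \ldots, e_n$ (built by successively absorbing cross-terms into $e_1$ via substitutions of the shape $e_1' = e_1 + z\sum_j a_{1j}^k e_j$, $e_1'' = e_1' + z^2(\cdots)$, and so on, order by order in $z$), every symmetric principal part of degree $d$ supported at one point becomes a diagonal sum $\sum_{i=1}^{r} \frac{e_i \otimes e_i}{z^{k_i}}$ with $\sum k_i = d$. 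Only after this reduction does the elementary degeneration $\frac{e_i \otimes e_i}{(z-c_1 t)\cdots(z-c_{k_i}t)}$ apply. Your proposal contains no substitute for this diagonalization step.

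Your fallback -- parametrize symmetric principal parts of fixed degree $d$ by an irreducible variety and conclude that the Zariski-open generic-support locus is dense -- is circular. The irreducibility of that parameter space is precisely the Corollary that the paper \emph{deduces} from this Proposition, not an input to it: the locus of symmetric tails whose image torsion sheaf has length exactly $d$ is only locally closed in the space of jets, it is stratified by pole order and by the ranks of the coefficient matrices, and there is no a priori reason these strata assemble into a single irreducible component. Density of an open locus requires irreducibility, so asserting irreducibility "cut out by the degree condition" without proof is assuming the conclusion. To repair the argument you would need either an independent proof of that irreducibility or the normal-form lemma above; the latter is what the paper actually does.
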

\begin{proof}
It suffices to consider the case where $p$ is supported at a
single point $x \in X$. In this case, we define the \textit{order} of $p$
to be the smallest integer $k$ such that $\textrm{Im}(p)$ is
contained in $E(kx)/E$. Let $z$ be a local coordinate at $x$ and
let $e_1, e_2, \ldots, e_n$ be a frame for $E$ near $x$. Then the symmetric principal part $p$ of order $k$ is locally expressed as
\[
p = \frac{\psi_k}{z^k} + \frac{\psi_{k-1}}{z^{k-1}} + \cdots +
\frac{\psi_1}{z}
\]
where $\psi_l$ is a symmetric tensor expressed by
\[
\psi_l = \sum_{1 \le i \le j \le n} a_{ij}^l (e_i \otimes e_j + e_j
\otimes e_i).
\]
Now we claim that after a suitable change of the local frame, every
symmetric principal part $p$ of degree $d$ and order $k$ can be
expressed by
\[
p = \sum_{i=1}^r p_{i} \quad \hbox{where} \quad p_{i} = \frac{e_i \otimes e_i}{z^{k_i}},
\]
and $k= k_1 \ge k_2 \ge \cdots \ge k_r$, and $\sum_{i=1}^r k_{i} = d$. It
is easy to see from this expression that $p$ is a limit of
a sequence of general symmetric principal parts of degree $d$: For each $i$,
choose $k_i$ distinct complex numbers $c_1, c_2, \ldots, c_{k_i}$
and consider the 1-parameter family of principal parts
\[
p_i(t) = \frac{e_i \otimes e_i}{(z-c_1t)(z-c_2t) \cdots
(z-c_{k_i}t)}.
\]
For $t\neq 0$, each $p_i(t)$ is a general principal part of degree $k_i$,
and $\sum_{i=1}^r p_i(0) = p$. Thus $p$ is the limit of a
sequence of general symmetric principal parts of degree
$\sum_{i=1}^r k_i = d$.

Now we prove the claim. Since $\psi_k$ is symmetric, we may assume
after a change of frame:
\[
\psi_k = e_1 \otimes e_1 + e_2 \otimes e_2 + \cdots + e_s \otimes
e_s
\]
for some $s \le n$. Now we make a change of local frame given by
\[
e_1' = e_1 + z \sum_{j=1}^n a_{1j}^k e_j.
\]
Then it is straightforward to check that $p$ is expressed as
\[
\frac{e_1' \otimes e_1'}{z^k} + \frac{\phi_k}{z^k} +
\frac{\phi_{k-1}}{z^{k-1}} + \cdots + \frac{\phi_1}{z},
\]
where $\phi_k, \phi_{k-1} \in \Sym^2<e_2, e_3, \cdots, e_n>$ \ and \
$\phi_{k-2}, \phi_{k-3}, \cdots, \phi_1 \in \Sym^2<e_1', e_2,
\cdots, e_n>$. For
\[
\phi_{k-2} = \sum_{j=1}^n b_{1j} (e_1' \otimes e_j + e_j \otimes
e_1') + \sum_{2 \le i \le j \le n} b_{ij} (e_i \otimes e_j + e_j
\otimes e_i),
\]
we make a change of local frame given by
\[
e_1'' = e_1' + z^2 (b_{11} e_1' +  \sum_{j=2}^n b_{1j} e_j).
\]
Then we can check that $p$ is expressed as
\[
\frac{e_1^{''} \otimes e_1^{''}}{z^k} + \frac{\phi_k'}{z^k} +
\frac{\phi_{k-1}'}{z^{k-1}} + \cdots + \frac{\phi_1'}{z},
\]
where $\phi_k', \phi_{k-1}', \phi_{k-2}' \in \Sym^2<e_2, e_3,
\cdots, e_n>$ \ and \ $\phi_{k-3}', \cdots, \phi_1' \in \Sym^2
<e_1'', e_2, \cdots, e_n>$.

Continuing this process, we finally get an expression for $p$ of the form
\[
\frac{e_1 \otimes e_1}{z^k} + q,
\]
where $q$ is a symmetric principal part expressed in $e_2, \ldots, e_n$.
Next we perform the same operation on $q$ and $e_2$. Continuing this process, finally we get an expression for
$p$ of the form
\[
p = \frac{e_1 \otimes e_1}{z^k} + \frac{e_2 \otimes e_2}{z^{k_2}} +
\cdots + \frac{e_r \otimes e_r}{z^{k_r}},
\]
as required.
\end{proof}

\begin{corollary}
For a fixed vector bundle $E$ over $X$ and for each $d>0$, the space
of symmetric principal parts in $\Prin(E \otimes E)$ of
degree $d$ is irreducible.
\end{corollary}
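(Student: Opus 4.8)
The plan is to derive the Corollary from the Proposition just proved. Write $\cP_d \subset \Prin(E \otimes E)$ for the set of symmetric principal parts of degree $d$, and $\cP_d^{\circ} \subseteq \cP_d$ for the subset of \emph{general} ones, those supported at $d$ distinct points. A subset $S$ of a variety is irreducible if and only if its closure $\overline{S}$ is, so it suffices to prove $\overline{\cP_d}$ irreducible. The Proposition says precisely that $\cP_d \subseteq \overline{\cP_d^{\circ}}$ (analytic limits lie in the analytic closure, which for the constructible set $\cP_d^{\circ}$ coincides with the Zariski closure). Combined with the obvious inclusion $\cP_d^{\circ} \subseteq \cP_d$, taking closures gives $\overline{\cP_d} = \overline{\cP_d^{\circ}}$. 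Hence the whole statement reduces to showing that $\cP_d^{\circ}$ is irreducible.

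To prove this I would parameterise $\cP_d^{\circ}$ explicitly. If $p \in \cP_d^{\circ}$ is supported at $d$ distinct points $x_1, \dots, x_d$ and has total degree $d$, then its local degree at each $x_i$ must equal $1$; thus $p$ has a single simple pole at each $x_i$ whose residue is a rank-one symmetric tensor, i.e. lies in the image of the Veronese map $w \mapsto w \otimes w$ from $E|_{x_i} \setminus \{0\}$ into $\Sym^2 E|_{x_i}$. Let $U_d \subset X^d$ be the open locus of ordered $d$-tuples of distinct points. Over $U_d$ I would form the bundle whose fibre over $(x_1, \dots, x_d)$ is the product over $i$ of these punctured rank-one residue loci at $x_i$, canonically sitting inside $(\Sym^2 E)(x_i)/\Sym^2 E$. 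Assembling the $d$ simple-pole principal parts into a single one defines a surjective morphism from this bundle onto $\cP_d^{\circ}$.

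It then remains to observe that the source of this morphism is irreducible: the base $U_d$ is irreducible, each fibre factor is the image of the irreducible $E|_{x_i} \cong \cc^{n}$ under $w \mapsto w \otimes w$ and is therefore irreducible, and the construction is Zariski-locally trivial over a trivialising cover for $E$, so the total space is irreducible; its image $\cP_d^{\circ}$ is then irreducible as well, which by the first paragraph finishes the proof. The only genuinely delicate point I expect is making this parameterisation rigorous at the scheme level, namely giving $\Prin(E \otimes E)$ and its degree-$d$ symmetric locus an algebraic structure, identifying the residue space $(\Sym^2 E)(x)/\Sym^2 E$ canonically so that the bundle over $U_d$ is honestly locally trivial, and checking that ``local degree $1$ at each point'' cuts out exactly the rank-one condition. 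Once this bookkeeping is in place the irreducibility statements are formal, so the obstacle here is notational rather than conceptual.
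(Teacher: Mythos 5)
Your argument is correct and is essentially the route the paper intends: the Corollary is stated as an immediate consequence of the Proposition, the point being exactly that the general symmetric principal parts of degree $d$ form an irreducible locus (fibred over the configuration space of $d$ points of $X$ with fibres the rank-one symmetric tensors $w \otimes w$) which, by the Proposition, is dense in the locus of all symmetric principal parts of degree $d$. Your explicit verification that local degree one forces a simple pole with rank-one residue, and the passage between analytic and Zariski closure, are details the paper leaves implicit but are handled correctly here.
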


\subsection{A variant of Hirschowitz' Lemma}

Hirschowitz' lemma \cite[4.6]{Hir} ensures us that the tensor product of two general
bundles is nonspecial (see also Russo--Teixidor i Bigas \cite[Theorem 1.2]{RT}).
This is useful in many situations; for instance the proof \cite[p.12]{CH} of
Hirschowitz' bound on the Segre invariants. Here we will prove a variant of Hirschowitz' lemma.
\begin{lemma} \label{vanishing}
Let $F$ be a general stable bundle  of rank $n$ and degree $d$. If
$d \le \frac{1}{2} n(g-1)$, then $h^0(X, F \otimes F) = 0$.
\end{lemma}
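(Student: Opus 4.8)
The plan is to prove that a general stable bundle $F$ of rank $n$ and degree $d \le \frac{1}{2}n(g-1)$ satisfies $h^0(X, F \otimes F) = 0$ by a degeneration argument that reduces the claim to a cohomological vanishing for a specific, explicitly constructed bundle, thereby invoking semicontinuity. The key numerical input is that $\deg(F \otimes F) = 2d \cdot n$, and by Riemann--Roch applied to the rank $n^2$ bundle $F \otimes F$, we have
\[
\chi(X, F \otimes F) = \deg(F \otimes F) + n^2(1-g) = 2nd + n^2(1-g).
\]
When $d \le \frac{1}{2}n(g-1)$, this gives $\chi(X, F \otimes F) = 2nd - n^2(g-1) \le 0$, so the desired vanishing $h^0 = 0$ is equivalent to $h^1 = -\chi = n^2(g-1) - 2nd$, i.e.\ to the \emph{nonspeciality} of $F \otimes F$. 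This is exactly the shape of statement that Hirschowitz' lemma governs, so the moral content is that $F \otimes F$ should be nonspecial for general $F$.

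First I would reduce to the boundary case $d = \frac{1}{2}n(g-1)$ (when $n(g-1)$ is even). Indeed, if the vanishing holds at the top allowed degree, then for smaller $d$ one can pass from a general $F$ of degree $d$ to a bundle of degree $\frac{1}{2}n(g-1)$ by a sequence of positive elementary transformations; since $F \hookrightarrow F'$ induces $F \otimes F \hookrightarrow F' \otimes F'$ on the relevant sheaves and sections of the subsheaf inject into sections of the larger one, a vanishing $h^0(F' \otimes F') = 0$ forces $h^0(F \otimes F) = 0$. Thus the essential case is when the Euler characteristic is exactly zero, where nonspeciality is the sharpest.

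Next, the heart of the argument is to exhibit a \emph{single} semistable (or stable) bundle $F_0$ of rank $n$ and degree $d$ with $h^0(X, F_0 \otimes F_0) = 0$; by semicontinuity of $h^0$ over the irreducible moduli space $\cU(n, d)$ (together with the fact that stable bundles form a dense open subset), the vanishing then propagates to the general member. The natural candidate is a direct sum $F_0 = \bigoplus_{i=1}^n L_i$ of general line bundles of suitable degrees summing to $d$, so that
\[
F_0 \otimes F_0 \ \cong \ \bigoplus_{i,j} L_i \otimes L_j,
\]
and $h^0$ of each summand vanishes precisely when $\deg(L_i \otimes L_j) < 0$, or is $0$ with $L_i \otimes L_j$ nontrivial for general choices. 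Balancing the degrees $\deg L_i \approx d/n \le \frac{1}{2}(g-1)$ makes each $\deg(L_i L_j) \approx 2d/n \le g-1$, which is on the nonspeciality boundary for line bundles and can be arranged to give vanishing $h^0$ for a general choice of the $L_i$ within their Jacobian components. Since a direct sum of line bundles is a (polystable, hence semistable) degeneration of a general stable bundle, this $F_0$ lies in the closure of the stable locus and the semicontinuity step applies.

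\emph{The main obstacle} will be the boundary arithmetic when $n(g-1)$ is odd, so that $d/n$ is not realizable by equal integer line bundle degrees: one cannot balance the $\deg L_i$ evenly, and some products $L_i \otimes L_j$ will have degree $g-1$ or $g$, where the general $h^0$ need not vanish. In that regime the naive split bundle may fail nonspeciality, and one must instead either perturb $F_0$ to a genuinely stable bundle whose restriction/tensor square avoids the theta divisor, or appeal directly to the cited form of Hirschowitz' lemma for tensor products of general bundles rather than to the split construction. I expect that handling this parity/boundary case cleanly — ensuring the constructed $F_0$ is both a valid semistable degeneration and has $F_0 \otimes F_0$ with no sections — is where the argument requires genuine care, and it is precisely the point at which the variant of Hirschowitz' lemma must be invoked rather than reproved by the elementary direct-sum trick.
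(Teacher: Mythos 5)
Your overall strategy --- reduce to the boundary degree, exhibit one semistable $F_0$ with $h^0(X,F_0\otimes F_0)=0$, and conclude by semicontinuity over an irreducible family --- is the same skeleton as the paper's Step 3, and for $g$ odd your split bundle $F_0=\bigoplus_i L_i$ with $\deg L_i=\frac{g-1}{2}$ does the job completely (all $L_i\otimes L_j$, including the squares $L_i^{2}$, are then general of degree $g-1$). But there is a genuine gap when $g$ is even, and your diagnosis of where it occurs is slightly off: the obstruction is governed by the parity of $g-1$, not of $n(g-1)$. The diagonal summands $L_i^{2}$ force $2\deg L_i\le g-1$, hence $\deg L_i\le\frac{g-2}{2}$ and $\deg F_0\le\frac{n(g-2)}{2}$, which for every $n\ge 2$ falls strictly short of the required boundary degree $\left\lfloor \frac{1}{2}n(g-1)\right\rfloor$ (e.g.\ $n=2$, $g=4$: you need degree $3$ but can reach at most $2$). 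So for $g$ even no direct sum of line bundles lies in the right moduli space, and the construction collapses exactly in the case the lemma is really about.

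Your proposed fallback --- ``appeal directly to the cited form of Hirschowitz' lemma'' --- is not available: Hirschowitz' lemma concerns $E\otimes F$ for two \emph{independently} general bundles and says nothing about the self-tensor $F\otimes F$; the diagonal is precisely what makes the present statement a ``variant'' requiring a new proof, so invoking it at the hard point is circular. The paper fills this hole with a genuinely different base case: for $F$ general in $\cU(2,g-1)$ it proves $h^0(X,\Sym^2 F)=0$ by a deformation argument (a nonzero symmetric $\alpha\colon F^*\to F$ must be generically injective, hence $\alpha^{\sharp}$ is surjective on $H^1$; since $h^1(X,\Sym^2 F)>0$ would follow from $h^0>0$ and $\chi=0$, the extension criterion of its Step 1 shows $\alpha$ cannot extend to a general deformation of $F$), and then builds $F_0$ inductively from such rank-two blocks (plus, for $n$ odd and $g$ even, a general extension by a line bundle of degree $\frac{g-2}{2}$), using the ordinary Hirschowitz lemma only for the off-diagonal blocks $F_1\otimes F_2$. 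That deformation-theoretic input is the mathematical heart of the proof and is entirely absent from your proposal.
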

\begin{proof} The proof will be completed in three steps.

\noindent \textit{Step 1}. \ Let $\mathbb{F}$ be a deformation of
$F$ given by a nonzero class $\delta \in H^{1}(X, \Endom(F))$. For a
given nonzero symmetric map $\alpha \colon F^* \to F$, we want to
know when there exists an extension $\tilde{\alpha} \colon
\mathbb{F}^{*} \to \mathbb{F}$ inducing the following commutative
diagram:
\begin{equation}
\begin{CD} \label{deformation}
0 @>>> F @>>>
\mathbb{F} @>>>  F  @>>> 0 \\
@. @AA{\alpha}A @AA\tilde{\alpha}A  @AA{\alpha}A  @.\\
0 @>>> F^* @>>> \mathbb{F}^* @>>>  F^*  @>>> 0.
\end{CD}
\end{equation}
We have induced maps
\begin{multline*} \alpha_{\flat} \colon H^{1}(X, \Endom(F^{*})) \to H^{1}(X, \Homom(F^{*}, F)) \\
\hbox{and} \quad \alpha^{\sharp} \colon H^{1}(X, \Endom(F)) \to H^{1}(X,
\Homom(F^{*}, F)). \end{multline*} Note that the class of the dual
deformation $\mathbb{F}^*$ is given by $-^{t}\delta \in H^{1}(X,
\Endom(F^{*}))$. By straightforward computation, we can check that the maps $\alpha$ on the outer terms in the diagram
(\ref{deformation}) extend to a map $\tilde{\alpha} \colon
\mathbb{F}^{*} \to \mathbb{F}$ if and only if $-\alpha_{\flat} \,^{t}\delta = \alpha^{\sharp}\delta$ in $H^{1}(X, \Homom(F^{*}, F))$.

Now since $\alpha$ is symmetric, we have
\[ -\alpha_{\flat} \,^{t}\delta = -\,(^{t}\alpha)_{\flat} \,^{t}\delta = -^{t}(\alpha^{\sharp}\delta). \]
Thus we obtain:
\begin{lemma}
\label{extcrit}
A nonzero symmetric map $\alpha : F^* \to F$ extends to $\tilde{\alpha}$ if and only if $\alpha^{\sharp}
\delta$ belongs to the subspace
$ H^{1}(X, \wedge^{2}F) \ \subseteq \ H^{1}(X, \Homom(F^{*}, F)). $ \qed
\end{lemma}
\textit{Step 2}. Rank 2 case: \ It suffices to consider the boundary
case when $\deg F = g-1$. Let $F$ be a general bundle in $\cU(2,
g-1)$. Since $\wedge^2 F = \det F$ is general in  $\mathrm{Pic}^{g-1} (X)$, we have
$H^0(X, \wedge^2 F) = 0$. Thus it suffices to show the vanishing
of $H^0(X, \Sym^2 F)$.

First we show that there is no nonzero map $F^* \to F$ whose image
is of rank 1: Since $s_1(F) = g-1$,
every line subbundle of $F$ has degree $\le 0$. Furthermore, by \cite[Corollary 3.2]{LN}), $F$ has
only finitely many maximal line subbundles (of degree 0). Thus if
there were a nonzero map $\alpha \colon F^* \to F$ with image $M$
of rank 1, then $\deg M = 0$ and $F$ should have both $M$ and
$M^*$ as its maximal subbundles. This implies either $M \cong M^*$
or there is a sequence
\[
0 \to M \oplus M^* \to F \to \tau \to 0
\]
for some torsion sheaf $\tau$ of degree $g-1$. By dimension
counting, one can check that neither of these conditions is
satisfied by a general $F \in \cU(2,g-1)$.

Next we show that if $\Sym^2 F$ has a nonzero section $\alpha$, then
it does not extend to every deformation of $F$. We have seen that
$\alpha : F^* \to F$ must be generically surjective. Hence the induced
cohomology map
\[
\alpha^{\sharp} \colon H^{1}(X, \Endom(F)) \to H^{1}(X, \Homom(F^{*}, F))
\]
is surjective. By the assumptions $h^0(X, \Sym^2 F) >0$ and
$\chi(\Sym^2 F)=0$, we see that $h^1(X, \Sym^2 F) >0$. This implies that
the image of $\alpha^{\sharp}$ is not contained in $H^1(X, \wedge^2 F)$. By
Lemma \ref{extcrit}, $\alpha$ does not extend to $\tilde{\alpha}$ for some deformation $\mathbb{F}$
of $F$. Therefore, $H^0(X, F \otimes F)$ vanishes for a general $F \in \cU(2,g-1)$.
\\
\textit{Step 3}. Induction for higher rank cases: \ Now we consider
bundles of rank $n \ge 3$. Firstly, suppose $n$ is even. By
semicontinuity, it suffices to find a bundle $F_0 \in \cU(n,
\frac{1}{2}n(g-1))$ such that $H^0(X, F_0 \otimes F_0) = 0$. We let
$F_0 = F_1 \oplus F_2$ where $F_1$ is general in $\cU(2, g-1)$ and
$F_2$ is general in $\cU(n-2, \frac{1}{2} (n-2)(g-1))$, so that
$F_0$ is a polystable bundle of rank $n$ and degree $\frac{1}{2} n
(g-1)$. From the generality condition and the induction hypothesis,
$F_1 \otimes F_1$ and $F_2 \otimes F_2$ have no nonzero sections.
Also $h^0(X, F_1 \otimes F_2) = 0$ by Hirschowitz'
lemma \cite[4.6]{Hir}, \cite[Theorem 1.2]{RT}. Therefore, $F_0
\otimes F_0$ has no nonzero sections.

Next suppose both $n$ and $g$ are odd. Let $F_0 = F_1 \oplus F_2$
for a general line bundle $F_1$ of degree $\frac{g-1}{2}$ and a
general $F_2 \subset \cU(n-1, \frac{1}{2} (n-1)(g-1))$. By the same
argument as above, we see that $h^0(X, F_0 \otimes F_0) = 0$.

Finally suppose that $n$ is odd and $g$ is even. In this case, it
suffices to find a bundle $F_0 \in \cU(n, \frac{1}{2}(n(g-1) -1))$
such that $H^0(X, F_0 \otimes F_0) = 0$. Let $F_0$ be a general
extension
\[
0 \to F_1 \to F_0 \to F_2 \to 0
\]
for a general line bundle $F_1$ of degree $\frac{g-2}{2}$ and a
general $F_2 \in \cU(n-1, \frac{1}{2}(n-1)(g-1))$. From the stability of $F_1$ and $F_2$,
it is easy to check that $F_0$ is also stable.
Again by the same
argument as above, we see that $h^0(X, F_0 \otimes F_0) = 0$.
\end{proof}
\begin{corollary} \label{dimension}
If $F$ is a general bundle of rank $n$ and degree $d \geq
\frac{1}{2} n(g-1)$, then
\[
h^0(X, \Sym^2 F) = (n+1)d - \frac{1}{2}n(n+1)(g-1).
\]
\end{corollary}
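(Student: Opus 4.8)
The plan is to reduce this high-degree vanishing statement to the low-degree vanishing already established in Lemma \ref{vanishing}, via Serre duality. First I would observe that the asserted value is simply the Euler characteristic. Since $\Sym^2 F$ has rank $\tfrac{1}{2}n(n+1)$ and, by the degree formula recalled in the introduction, degree $(n+1)d$, Riemann--Roch gives
\[
\chi(X, \Sym^2 F) = (n+1)d - \tfrac{1}{2}n(n+1)(g-1),
\]
which is exactly the right-hand side. Hence the Corollary is equivalent to the single vanishing $h^1(X, \Sym^2 F) = 0$, and this is all that remains to prove.

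To obtain the vanishing I would pass to the dual side. By Serre duality and the identification $(\Sym^2 F)^* \cong \Sym^2(F^*)$ (valid since we work over $\cc$), we have
\[
h^1(X, \Sym^2 F) = h^0(X, \Sym^2(F^*) \otimes \Kx).
\]
Now choose a theta characteristic $L$, i.e.\ a line bundle with $L^{\otimes 2} \cong \Kx$; such $L$ exists on any smooth projective curve. Because $\Sym^2(F^* \otimes L) \cong \Sym^2(F^*) \otimes L^{\otimes 2} \cong \Sym^2(F^*) \otimes \Kx$, setting $G := F^* \otimes L$ rewrites the right-hand side as $h^0(X, \Sym^2 G)$.

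It then remains to check that $G$ lies in the range of Lemma \ref{vanishing}. The bundle $G$ has rank $n$ and degree $n(g-1) - d$, so the hypothesis $d \ge \tfrac{1}{2}n(g-1)$ yields $\deg G \le \tfrac{1}{2}n(g-1)$. Moreover, dualizing followed by tensoring with the fixed line bundle $L$ defines an isomorphism of moduli spaces $\cU(n,d) \to \cU(n, n(g-1)-d)$, so a general $F$ corresponds to a general (hence stable) $G$. Lemma \ref{vanishing} then gives $h^0(X, G \otimes G) = 0$, and since $\Sym^2 G$ is a direct summand of $G \otimes G$ we deduce $h^0(X, \Sym^2 G) = 0$. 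Tracing the chain of equalities backwards, $h^1(X, \Sym^2 F) = 0$, which completes the argument.

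The main point is therefore not analytic: the genuinely hard vanishing is precisely Lemma \ref{vanishing}, which I may assume. The step that requires care is the bookkeeping in the duality reduction --- namely verifying that the dualize-and-twist operation preserves genericity, so that Lemma \ref{vanishing} may legitimately be invoked for the general $F$ in question, and confirming that the degree shift lands $G$ exactly in the regime $\deg G \le \tfrac{1}{2}n(g-1)$. The boundary case $d = \tfrac{1}{2}n(g-1)$ is covered by the equality case of the Lemma, and when $n(g-1)$ is odd the inequality is automatically strict, so no separate treatment is needed.
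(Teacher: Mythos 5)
Your proposal is correct and follows essentially the same route as the paper: reduce to $h^1(X,\Sym^2 F)=0$ via Riemann--Roch, apply Serre duality, absorb $K_X$ into a theta characteristic twist to land in the degree range of Lemma \ref{vanishing}, and conclude. Your explicit check that dualize-and-twist preserves genericity is a small point the paper leaves implicit, but the argument is the same.
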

\begin{proof}
For a theta characteristic $\kappa$ of $X$, we have
\begin{eqnarray*}
h^1(X, \Sym^2 F) &\cong& h^0(X, K_X \otimes \Sym^2 F^*)\\
& =& h^0(X, \Sym^2(\kappa \otimes F^*)).
\end{eqnarray*}
Since $\deg (\kappa \otimes F^*) = n(g-1) -d \le \frac{1}{2}n(g-1)$,
we can apply Lemma \ref{vanishing} to get the vanishing of $H^1(X,
\Sym^2 F)$. By Riemann-Roch, we get the wanted result on $h^0(X, \Sym^2 F)$.
\end{proof}

\begin{remark}
In the case when either $n$ is even or $g$ is odd, the assignment $F
\mapsto F \otimes F$ defines a morphism
\[ \cU (n, \frac{1}{2}n(g-1) ) \to \cU(n^{2}, n^{2}(g-1)), \]
since a tensor product of semistable bundles is semistable. The
target moduli space has a generalized theta divisor, whose support
consists of semistable bundles with nonzero sections. Lemma
\ref{vanishing} implies that the image of this morphism is not contained in the
generalized theta divisor.
\end{remark}

\section*{Acknowledgements}

The first named author was supported by the National Research Foundation of Korea (NRF) grant funded by the Korea government(MEST) (No.\ 2010-0001194).

The second author thanks Konkuk University for generous hospitality and financial support on two occasions.

\vspace{1cm}

\noindent Department of Mathematics, Konkuk University, 1 Hwayang-dong, Gwangjin-Gu, Seoul 143-701, Korea.\\
Email: \texttt{ischoe@konkuk.ac.kr}\\
\\
H\o gskolen i Vestfold, Boks 2243, N-3103 T\o nsberg, Norway.\\
Email: \texttt{george.h.hitching@hive.no}

\end{document}